\newcommand{\bbN}{\mathbb{N}}
\newcommand{\R}{\mathbb{R}}
\newcommand{\p}{\mathbb{P}}
\newcommand{\E}{\mathbb{E}}
\newcommand{\En}{\mathcal{E}}
\newcommand{\tEn}{\widetilde{\En}}
\newcommand{\bv}{\overline{v}}
\newcommand{\bg}{\overline{g}}
\newcommand{\bK}{\overline{K}}
\newcommand{\bA}{\overline{A}}
\newcommand{\tC}{\widetilde{C}}
\newcommand{\dn}{\delta_E}
\newcommand{\Cn}{C_E}
\newcommand{\dm}{m}
\newcommand{\mM}{\mathcal{M}}
\newcommand{\mK}{\mathcal{K}}
\newcommand{\Diff}{\mathrm{Diff}}
\newcommand{\Homeo}{\mathrm{Homeo}}
\newcommand{\Leb}{\mathrm{Leb}}
\newcommand{\supp}{\mathop{\mathrm{supp}}}
\newcommand{\Lip}{\mathop{\mathrm{Lip}}}
\newcommand{\fL}{\frak{L}}
\newcommand{\SL}{\mathrm{SL}}
\newcommand{\TV}{\mathrm{TV}}
\newcommand{\vol}{\mathrm{vol}}
\newcommand{\Jac}{\mathrm{Jac}}
\newcommand{\mgr}{\mu}
\newcommand{\msp}{\nu}
\newcommand{\mTn}{\theta}
\newcommand{\dens}{\rho}
\newcommand{\mT}{\Theta}
\newcommand{\mTt}{\widehat{\Theta}}
\newcommand{\mTtn}{\widehat{\theta}}
\newcommand{\mTf}{\Theta'}
\newcommand{\mTfn}{\theta'}
\newcommand{\Ind}{\mathrm{1\!\!I}}
\newcommand{\eps}{\varepsilon}
\newcommand{\const}{\mathrm{const}}
\newcommand{\diam}{\mathrm{diam}}
\newcommand{\bhref}[2]{\href{#1}{\textcolor{blue}{#2}}}
\newcommand{\arXiv}[1]{arXiv:\bhref{https://arxiv.org/pdf/#1.pdf}{#1}}
\newtheorem{theorem}{Theorem}[section]
\newtheorem*{theorem-non}{Theorem}
\newtheorem{lemma}[theorem]{Lemma}
\newtheorem{proposition}[theorem]{Proposition}
\newtheorem{corollary}[theorem]{Corollary}
\theoremstyle{definition}
\newtheorem{definition}[theorem]{Definition}
\newtheorem{remark}[theorem]{Remark}
\numberwithin{equation}{section}
\begin{document}

\title{H\"older regularity of stationary measures}
\date{}
\author{Anton Gorodetski}

\address{Anton Gorodetski, Department of Mathematics, University of California, Irvine, CA~92697, USA}

\email{asgor@uci.edu}

\thanks{A.\ G.\ and G.\ M.\ were supported in part by NSF grant DMS--1855541.}

\author{Victor Kleptsyn}

\address{Victor Kleptsyn, Univ Rennes, CNRS, IRMAR - UMR 6625, F-35000 Rennes, France}

\email{victor.kleptsyn@univ-rennes1.fr}

\thanks{V.K. was supported in part by ANR Gromeov (ANR-19-CE40-0007) and by Centre Henri Lebesgue (ANR-11-LABX-0020-01)
}

\author{Grigorii Monakov}

\address{Grigorii Monakov, Department of Mathematics, University of California, Irvine, CA~92697, USA}

\email{gmonakov@uci.edu}


\begin{abstract}
We consider smooth random dynamical systems defined by a distribution with a finite moment of the norm of the differential, and prove that under suitable non-degeneracy conditions any stationary measure must be H\"older continuous. The result is a vast generalization of the classical statement on H\"older continuity of stationary measures of random walks on linear groups.
\end{abstract}

\maketitle


\section{Introduction}\label{s.intro}

Let $M$ be a smooth closed Riemannian  manifold, 
and $\mgr$ be a Borel probability measure on $\Diff^1(M)$, the set of $C^1$-diffeomorphisms of~$M$. 
Consider the corresponding random dynamical system, given by the compositions \[
T_{n}:=f_{n}\circ\dots\circ f_1,
\]
where $f_i\in \Diff^1(M)$ are chosen randomly and independently, with respect to the distribution $\mgr$. 

If an initial point $x\in M$ is distributed with respect to a probability measure $\msp$, 
 one can consider the distribution $\mgr*\msp$ of its random image $f(x)$. In other words, $\mgr*\msp$ is the $\mgr$-averaged push-forward image of the measure $\msp$:
\[
\mgr*\msp := \int_{\Diff^1(M)} (f_* \msp) \, d\mgr(f).
\]


The measure $\msp$ is called \emph{$\mgr$-stationary} if $\mgr*\msp=\msp$. Stationary measures of random dynamical systems are analogues of invariant measures of  deterministic maps, and their properties are of crucial importance for many results in random dynamics, see \cite{A, BH, BQ16, BL, Fu, Fu1, Kif1, Kif2, LQ, M} and references therein. 

For a diffeomorphism $f \in \Diff^1(M)$  denote
    \begin{equation*}
        \fL (f)  = \max(\Lip(f), \Lip(f^{-1})).
    \end{equation*}
Here is the main result of this paper:
\begin{theorem}\label{t.main}
Suppose that $\mgr$ is a probability distribution on $\Diff^1(M)$ such that $\int\frak{L}(f)^\gamma d\mgr(f)<\infty$ for some $\gamma>0$. Suppose also that there is no probability measure $m$ on the manifold $M$ invariant under every map $f\in \text{supp}\,\mgr$. Then every stationary measure of a random dynamical system defined by the distribution $\mgr$ is H\"older continuous.
\end{theorem}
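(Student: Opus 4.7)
The aim is a uniform bound $\msp(B(x,r)) \le C r^\alpha$ for some $\alpha > 0$, all $x \in M$ and $r \in (0,1)$. I plan to approach this through the two-point motion on $M \times M$: the Markov operator $Pg(x,y) = \int g(f(x), f(y))\, d\mgr(f)$ has $\msp \otimes \msp$ as a stationary measure. The first milestone is to establish the $s$-energy bound $\iint d(x,y)^{-s}\, d\msp(x)\, d\msp(y) < \infty$ for some $s > 0$; a second step upgrades this (Frostman-type) almost-everywhere Hölder bound to one uniform in $x \in M$, exploiting the stationarity $\msp = \mgr * \msp$ together with the moment assumption on $\fL$.

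\textbf{Drift estimate.} The finiteness of the Lyapunov function $V_s(x,y) = d(x,y)^{-s}$ against $\msp \otimes \msp$ would follow from a Foster--Lyapunov inequality
\[
P^N V_s(x,y) \le \theta\, V_s(x,y) + C, \qquad \theta \in (0,1),
\]
for some $N \in \bbN$ and $s > 0$ sufficiently small. The crude bound $d(f(x), f(y)) \ge \fL(f)^{-1} d(x,y)$ yields $P V_s \le \bigl(\int \fL(f)^s\, d\mgr(f)\bigr) V_s$, but this constant is always $\ge 1$ by Jensen. To get contraction, I would split $P^N V_s(x,y)$ on the event $E = \{d(T_N x, T_N y) \ge \delta_0\}$ that the pair has escaped to macroscopic distance from the diagonal --- on which $V_s \le \delta_0^{-s}$ --- and its complement (bounded using $V_s(T_N x, T_N y) \le \prod_{i \le N} \fL(f_i)^s \cdot V_s(x,y)$ and H\"older's inequality). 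For $s > 0$ small enough, $\bigl(\int \fL^{sq}\, d\mgr\bigr)^{N/q}$ is close to $1$ while $\p(E^c)^{1/q'} \le (1 - p_0)^{1/q'}$ stays away from $1$, yielding the desired drift.

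\textbf{Quantitative spreading.} The key input is a uniform spreading statement: there exist $\delta_0, \delta_1 > 0$, $N \in \bbN$, $p_0 > 0$ such that for every $(x,y) \in M \times M$ with $d(x,y) \ge \delta_1$,
\[ \p\bigl(d(T_N x, T_N y) \ge \delta_0\bigr) \ge p_0. \]
My plan here is a compactness-and-contradiction argument: if this failed for arbitrarily small parameters, one would extract via Krylov--Bogolyubov a stationary measure for the two-point motion that concentrates near the diagonal, from which one would manufacture a probability measure on $M$ invariant under every $f \in \supp \mgr$, contradicting the hypothesis. For pairs below scale $\delta_1$, I would run the process for approximately $\log(\delta_1 / d(x,y))$ steps until escape, controlling the corresponding growth of $V_s$ via $\int \fL^\gamma d\mgr < \infty$.

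\textbf{Main obstacle.} The principal difficulty is the non-invariance-to-spreading step: a direct weak-$*$ limit on the diagonal only produces a $\mgr$-stationary measure on $M$, not a common invariant one, so additional arguments exploiting the structure of the two-point Markov kernel are needed to reach a contradiction. A secondary delicate point is the final upgrade from an $L^1(\msp)$-Hölder bound to a bound uniform in $x \in M$, for which the moment assumption on $\fL$ together with iterated use of $\msp = \mgr * \msp$ to propagate the bound from $\msp$-typical points to all points is crucial.
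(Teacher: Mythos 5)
Your scheme is the classical two--point--motion / Foster--Lyapunov route that works for projective actions of linear groups, but in the present generality it breaks down exactly at the diagonal, which is where the drift inequality is needed. For a fixed horizon $N$, the escape probability $\p\bigl(d(T_Nx,T_Ny)\ge\delta_0\bigr)$ tends to $0$ as $d(x,y)\to0$, since $d(T_Nx,T_Ny)\le\prod_{i\le N}\fL(f_i)\,d(x,y)$ with $\fL(f_i)<\infty$ a.s.; hence no uniform $p_0$ exists on the whole off--diagonal, and your H\"older-splitting bound degenerates to $P^NV_s\le\delta_0^{-s}+(\text{const}\ge1)\,V_s$ precisely where $V_s$ is large. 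Repairing this would require a separation estimate for infinitesimally close points, essentially $\E\bigl[\|DT_N(x)v\|^{-s}\bigr]\le\theta<1$ uniformly in $(x,v)$ --- a positivity-of-Lyapunov-exponent statement for the derivative cocycle that does not follow from ``no common invariant measure'' by the compactness argument you propose: the image of any $\mgr$-stationary measure under $x\mapsto(x,x)$ is \emph{always} a stationary measure for the two-point motion supported on the diagonal, so concentration near the diagonal can never, by itself, contradict the hypothesis. The ``main obstacle'' you flag is therefore not a technicality but a genuine circularity at the heart of the scheme.

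Two further steps are wrong or vacuous as stated. First, $\msp\otimes\msp$ is \emph{not} stationary for the two-point motion: the same $f$ is applied to both coordinates, so $\int f_*\msp\otimes f_*\msp\,d\mgr(f)\neq\msp\otimes\msp$ in general; a Foster--Lyapunov bound yields $\int V_s\,d\pi<\infty$ only for genuine two-point stationary measures $\pi$, and finiteness of energy for such a coupling does not control $\iint d(x,y)^{-s}\,d\msp\,d\msp$. Second, even granting the drift, applying it to a measure whose energy is not already known to be finite gives only $\infty\le\theta\cdot\infty+C$. The paper confronts exactly these issues and resolves them by a different mechanism: it replaces $d(x,y)^{-s}$ by a bounded cut-off kernel $U_{\alpha,\eps}$ built as a Gram kernel $\int\varphi_{\alpha,\eps}(|y-x|)\varphi_{\alpha,\eps}(|y-z|)\,d\Leb(y)$, so that the energy is a squared $L_2$-norm; it proves that convolution with $\mgr$ contracts \emph{large total} cut-off energies (Proposition~\ref{t:contr}) via an $L_2$ variance/almost-alignment argument whose failure produces, in the limit, a measure with a deterministic image --- contradicting the hypothesis after replacing $\mgr$ by $\mgr^{*k}$ (Proposition~\ref{p:k}); and it then extracts the H\"older bound on scales $r>\kappa^n$ by optimizing the cut-off $\eps$ against the number of iterations. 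No pointwise separation of nearby orbits is ever proved or needed.
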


In Section \ref{s.2} below we provide formal definitions and more general and stronger versions of this result. 
 Specifically, in Theorem \ref{t:main-2} we show that averaged images of any initial measure become H\"older regular on any scale larger than some threshold, which decays exponentially in the number of iterates. Theorem \ref{t:main-3} is a similar result for a non-stationary case, namely, it claims that, informally speaking,  by averaging with respect to a different distribution on each step, one ``regularizes''  a probability measure on a manifold, bringing it closer to the subspace of H\"older measures. 

 Let us now discuss two classical statements (on H\"older continuity of stationary measures for random matrix products and for iterated function systems) that can be considered partial cases of Theorem~\ref{t.main}.

\subsection{Stationary measures for random matrix products}\label{ss.smrmp}

Consider random products of iid matrices $A_nA_{n-1}\ldots A_1$, where each $A_i$ is a random matrix chosen with respect to a probability distribution $\tilde \mgr$ on $SL(d, \mathbb{R})$, $d\ge2$.
Let $S_{\tilde \mgr}$ be the closed semigroup in $SL(d, \mathbb{R})$ generated by matrices from $\text{supp}\,{\tilde\mgr}$. Then $S_{\tilde \mgr}$ is called {\it strongly irreducible} if there is no finite family of proper non-zero subspaces $V_1, V_2, \ldots, V_N\subset \mathbb{R}^d$ such that
$$
A(\cup_{i=1}^N V_i)=\cup_{i=1}^N V_i \ \text{for all}\ A\in S_{\tilde \mgr},
$$
and {\it proximal} (or {\it contracting}) if there exists a sequence $A_1, A_2, \ldots, \in S_{\tilde \mgr}$ and a sequence of real numbers $a_1, a_2, \ldots$ such that the sequence of operators $\{a_iA_i\}$ converges in norm to a linear endomorphism of $\mathbb{R}^d$ of rank one, see \cite{BL} for a detailed discussion of these notions.

Every $A\in \SL(d, \mathbb{R})$ induces a projective map $f:\mathbb{RP}^{d-1}\to \mathbb{RP}^{d-1}$. Let us denote by $\mgr$ the distribution on projective maps induced by the distribution $\tilde \mgr$ on $\SL(d, \mathbb{R})$.
\begin{theorem}[Guivarc’h, \cite{G}]\label{t.g}
Suppose that, in the setting above, $S_{\tilde \mgr}$ is strongly irreducible and proximal, and
$$
\int\|A\|^{\gamma}d\tilde \mgr<\infty
$$
for some $\gamma>0$. Then the random dynamical system given by the distribution $\mgr$ on the projective maps of $\mathbb{RP}^{d-1}$ has unique stationary measure $\msp$ on $\mathbb{RP}^{d-1}$, and $\msp$ is H\"older continuous.
\end{theorem}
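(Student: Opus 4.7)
The plan is to follow Le Page's spectral approach, which reduces H\"older continuity of $\msp$ to quantitative contraction estimates for an analytic family of twisted transfer operators on $\mathbb{RP}^{d-1}$ and its exterior powers.

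First, I would establish existence of $\msp$ via a standard averaging argument using compactness of $\mathbb{RP}^{d-1}$. Uniqueness, non-atomicity, and the fact that $\msp$ charges no proper projective subspace then follow from Furstenberg's lemma under strong irreducibility, with proximality eliminating residual multiplicity. Furstenberg's theorem under the same hypotheses also yields simplicity of the top Lyapunov exponent, $\lambda_1 > \lambda_2$; this gap is the qualitative engine of contraction on projective space and drives everything that follows.

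The technical core is to promote the Lyapunov gap into a quantitative exponential contraction at the H\"older scale. Introduce the analytic family
\[
P_\alpha \phi(\bar v) := \int \left(\frac{\|Av\|}{\|v\|}\right)^\alpha \phi\bigl(\overline{Av}\bigr)\, d\tilde\mgr(A), \qquad 0 \le \alpha < \gamma,
\]
acting on $\beta$-H\"older functions on $\mathbb{RP}^{d-1}$ equipped with the sine-of-angle metric $\delta(\bar v, \bar w) = \|v \wedge w\|/(\|v\|\,\|w\|)$. The moment condition $\int \|A\|^\gamma\, d\tilde\mgr < \infty$ makes $P_\alpha$ bounded and analytic in $\alpha$ near $0$. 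A Doeblin--Fortet inequality (from the moment condition together with an exterior-product computation) yields quasi-compactness of $P_\alpha$; combined with Furstenberg's lemma this rules out any peripheral eigenvalue other than the dominant one $\kappa(\alpha)$, giving a genuine spectral gap for all small $\alpha \ge 0$. The same machinery, applied to the analogous transfer operators on the Grassmannian $\mathrm{Gr}_2(\R^d)$, combined with $\lambda_1 > \lambda_2$, yields the $L^\alpha$ contraction
\[
\sup_{\bar v \neq \bar w \in \mathbb{RP}^{d-1}} \int \delta\bigl(A_n \cdots A_1 \bar v,\, A_n \cdots A_1 \bar w\bigr)^\alpha\, d\tilde\mgr^{\otimes n} \le C \rho^n\, \delta(\bar v, \bar w)^\alpha
\]
for some $C > 0$, $\rho \in (0, 1)$, and all sufficiently small $\alpha > 0$.

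Finally, H\"older continuity of $\msp$ follows from a bootstrap: applying stationarity to the truncated potential $J_\alpha^R(x) := \int \min\bigl(\delta(x, y)^{-\alpha},\, R\bigr)\, d\msp(y)$, decomposing $\delta$ on $\mathbb{RP}^{d-1}$ via the exterior-product identity relating $\delta(\overline{Av}, \overline{Aw})$ and $\delta(\bar v, \bar w)$, and folding in the $L^\alpha$ contraction closes at $\sup_{x, R} J_\alpha^R(x) < \infty$ for small $\alpha > 0$, equivalently $\msp(B(x, r)) \le C r^\alpha$ uniformly in $x$ and $r$. The main obstacle is the quasi-compactness and spectral gap for $P_\alpha$: promoting qualitative Lyapunov simplicity into a uniform exponential contraction rate on H\"older functions is the technical heart of the argument and depends crucially on all three hypotheses --- strong irreducibility, proximality, and the finite $\gamma$-moment.
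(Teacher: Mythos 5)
Your proposal is correct in outline, but it follows a genuinely different route from the paper. The paper does not reprove Guivarc'h's theorem by spectral methods: it derives the H\"older continuity as a special case of its general Theorem~\ref{t.main} on random diffeomorphisms. Concretely, it observes that strong irreducibility and proximality imply, via Furstenberg's lemma (Propositions 1.6 and 2.3 of Chapter III of \cite{BL}), that the induced projective maps admit no common invariant measure, and that $\int\|A\|^{\gamma}\,d\tilde\mgr<\infty$ yields the finite moment condition on $\fL(f)$; the regularity of every stationary measure then comes from the soft energy-decay argument (the cut-off potentials $\En_{\alpha,\eps}$, $\tEn_{\alpha,\eps}$ and Proposition~\ref{t:contr}), with uniqueness supplied separately by the classical theory. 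Your Le Page--Guivarc'h argument --- the twisted operators $P_\alpha$, Doeblin--Fortet quasi-compactness, the $L^\alpha$ contraction of $\delta(S_n\bar v,S_n\bar w)$ driven by $\lambda_1>\lambda_2$, and the bootstrap on the truncated potential $J_\alpha^R$ --- is the classical proof; it buys strictly more (a spectral gap, hence exponential equidistribution, limit theorems), but it leans heavily on the linear structure (exterior powers, the cocycle identity for $\delta$) and on simplicity of the top exponent, none of which the paper's compactness/energy argument uses. One remark on your write-up: for the regularity conclusion alone the full spectral gap of $P_\alpha$ --- in particular the exclusion of peripheral spectrum, which is the delicate point of Le Page's theorem --- is not needed; the $L^\alpha$ contraction estimate together with stationarity and Markov's inequality already closes the bootstrap, so you could decouple those two steps and shorten the technical heart of your argument.
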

An alternative proof of Theorem \ref{t.g} (inspired by \cite{BFLM}) can be found in \cite{BQ16}. A proof of H\"older continuity of stationary measures under somewhat weaker assumptions was obtained in \cite{AG}.

Notice that Theorem \ref{t.g} can be considered as a partial case of Theorem \ref{t.main}. Indeed, strong irreducibility and proximality conditions imply absence of common invariant measure for the corresponding projective maps (this follows, for example, from Propositions 1.6 and 2.3 in \cite[Chapter III]{BL}), and finiteness of momentum condition implies the corresponding condition in Theorem \ref{t.main}.

Notice that positivity of Lyapunov exponents for random matrix products due to Furstenberg Theorem holds under weaker conditions.  In particular, instead of finiteness of the integral $\int\|A\|^{\gamma}d\tilde \mgr$ it is enough to require that $\int\log\|A\|d\tilde \mgr<\infty$. It is interesting to notice that it is not sufficient to ensure H\"older continuity of the stationary measure, see Appendix \ref{ss.example} below. At the same time, weaker statements on moduli of continuity can be established under these weaker assumptions, see Proposition 4.5 in \cite{BQ15} and Theorem 1.4 in \cite{DKW}.

Other properties of the stationary measure in the setting of Theorem \ref{t.g} were studied, sometimes under additional assumptions on the distribution $\tilde \mgr$. For example, in \cite{KL} it was conjectured that if the distribution $\tilde \mgr$ in Theorem \ref{t.g} is finitely supported, then the stationary measure must be singular. A counterexample to that conjecture with absolutely continues stationary measure was constructed in \cite{BPS}, and with stationary measure having smooth density -- in \cite {B}. A related question is whether in the case of a distribution $\tilde \mgr$ that generates a Fuchsian group the corresponding stationary measure must be singular. It was settled for non-cocompact actions by Guivarc'h and Le Jan in \cite{GL} (see also \cite{DKN} and \cite{GMT}), but is still open in full generality.


\subsection{Stationary measures for iterated function systems}

An iterated function system is defined by a finite collection of contractions of a complete metric space, usually of $\mathbb{R}^d$ or a compact subset of $\mathbb{R}^d$. In \cite[Proposition 2.2]{FL} it is shown that in the case of an iterated function system generated by a finite collection of similarities in $\mathbb{R}^d$ (without a common fixed point) applied with given prescribed probabilities the unique stationary measure must be H\"older continuous. Let us generalize this statement by allowing infinite families of contractions.
\begin{theorem}\label{t.IFS}
Let $B\subset \mathbb{R}^d$ be a closed ball, and $\mgr$ be a probability distribution on the space of all $C^1$ maps (contractions) $f:B\to B$ such that $f$ is a diffeomorphims of $B$ onto $f(B)$, and $\max_{x\in B}\|Df(x)\|<1$. Suppose that
$$
\int \left[\Lip(f^{-1}|_{f(B)})\right]^\gamma d\mgr<\infty \ \text{for some}\ \gamma>0,
$$
and there is no common fixed point for all maps $f\in \text{supp}\, \mgr$. Then the unique stationary measure of the random dynamical system generated by the distribution $\mgr$ is H\"older continuous.
\end{theorem}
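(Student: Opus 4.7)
The plan is to reduce Theorem~\ref{t.IFS} to Theorem~\ref{t.main} by embedding $B \hookrightarrow S^d$ (via stereographic projection) and extending the random IFS to a random dynamical system on the closed manifold $S^d$. Existence and uniqueness of a $\mgr$-stationary measure $\msp$, supported on some compact attractor $K \subset B$, is standard: since $\Lip(f) < 1$ holds $\mgr$-almost surely, the backward iterates $f_1 \circ \dots \circ f_n(x_0)$ contract and converge almost surely to a random point whose law is $\msp$.

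To build the extension, I would fix two distinct auxiliary points $q_1, q_2 \in S^d \setminus B$, and for each $f \in \supp\mgr$ construct two diffeomorphisms $\tilde f_1, \tilde f_2 \in \Diff^1(S^d)$ satisfying: (a) $\tilde f_j|_B = f$; (b) $\tilde f_j$ is Morse--Smale with non-wandering set exactly $\{p_f, q_j\}$, where $p_f \in B$ is the (attracting) fixed point of $f$ and $q_j$ is a hyperbolic repelling source added by the construction; (c) $\Lip(\tilde f_j) \leq C_1$ and $\Lip(\tilde f_j^{-1}) \leq C_1 + C_2 \Lip(f^{-1}|_{f(B)})$ for constants $C_1, C_2$ depending only on $B, q_1, q_2$. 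Concretely, one matches $f$ across $\partial B$ to a fixed gradient-like diffeomorphism of $S^d \setminus B$ (the time-one map of a vector field with unique source at $q_j$ and all other orbits flowing towards $\partial B$) over a narrow collar; the Lipschitz control follows since the matching happens over a collar of fixed width with bounded boundary data. Let $\tilde \mgr$ be the pushforward of $\mgr \times \mathrm{Uniform}\{1, 2\}$ under $(f, j) \mapsto \tilde f_j$.

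Both hypotheses of Theorem~\ref{t.main} for $\tilde \mgr$ are now in reach. The moment condition follows from (c) and the moment assumption on $\mgr$. For the absence of a common invariant measure, suppose $m$ is invariant under every $\tilde g \in \supp\tilde\mgr$; then for each fixed $f \in \supp\mgr$ the Morse--Smale structure of $\tilde f_j$ forces $\supp m \subset \{p_f, q_j\}$ for both $j = 1, 2$, hence $\supp m \subset \{p_f\}$ and $m = \delta_{p_f}$. Since this must hold for every $f \in \supp\mgr$, the point $p_f$ is independent of $f$, contradicting the no-common-fixed-point hypothesis. Moreover $\msp$ is itself $\tilde\mgr$-stationary: $\tilde f_j|_B = f$ together with $\supp\msp \subset B$ give $(\tilde f_j)_*\msp = f_*\msp$, so averaging over $(f,j)$ returns $\mgr * \msp = \msp$. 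Theorem~\ref{t.main} applied to $\tilde \mgr$ then yields that $\msp$ is H\"older continuous.

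The main obstacle will be the explicit construction of the Morse--Smale extensions $\tilde f_j$ with the required Lipschitz control: they must extend $f$ smoothly across $\partial B$, introduce exactly one new hyperbolic source at $q_j$ and no other fixed points, be diffeomorphisms of all of $S^d$, and have the Lipschitz norm of the inverse controlled by a universal multiple of $\Lip(f^{-1}|_{f(B)})$ uniformly in $f \in \supp\mgr$. This is a standard but technical gluing argument via a tubular collar of $\partial B$ in $S^d \setminus B$ and a carefully chosen time-one gradient-like map there; everything else is a direct verification.
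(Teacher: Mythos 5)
Your overall strategy is sound, and the two-source trick (using extensions with repelling fixed points at two \emph{different} auxiliary points $q_1,q_2$ so that no Dirac mass at an added source can survive as a common invariant measure) is a correct and genuinely necessary idea for your route. However, your route is not the paper's, and the step you flag as the "main obstacle" is a genuine gap rather than a routine gluing argument. You need, for \emph{every} $f$ in $\supp\mgr$, a diffeomorphism of $S^d$ extending $f$ with $\Lip(\tilde f_j^{-1})\le C_1+C_2\Lip(f^{-1}|_{f(B)})$ uniformly. Two problems arise. First, the mere existence of a $C^1$ (or even bi-Lipschitz) extension of an embedding $f\colon B\to \mathrm{int}(B)$ to a self-homeomorphism of $S^d$ requires identifying $S^d\setminus \mathrm{int}(f(B))$ (equivalently, the region $B\setminus\mathrm{int}(f(B))$) with a standard disk (resp.\ annulus); in the smooth category this runs into Schoenflies/annulus-type questions that are delicate and, in dimension $4$, open in general. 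Second, and independently, the standard tools that do produce extensions (isotopy extension, collar gluing) give no quantitative control: the boundary data $f|_{\partial B}$ and the shape of $f(B)$ vary with $f$ over an in general non-compact family, and there is no off-the-shelf lemma bounding the Lipschitz constant of the extension by a universal affine function of $\Lip(f^{-1})$ alone. As written, the reduction to Theorem~\ref{t.main} on a closed manifold therefore rests on an unproved (and nontrivial) extension lemma.

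The paper avoids this entirely. It invokes Remark~\ref{rem:boundary}: Theorems~\ref{t:main-1}--\ref{t:main-3} hold verbatim for a compact manifold with boundary when $\mgr$ is supported on diffeomorphisms of $M$ \emph{onto their images} — surjectivity of the maps is never used in the proof. The only place an ambient closed manifold enters is in the definition of the energies $\En_{\alpha,\eps}$ and $\tEn_{\alpha,\eps}$, which are computed with respect to the Lebesgue measure of an embedding $M\hookrightarrow\widetilde M$; the maps themselves are never extended. The remaining step is the same observation you make implicitly: by the contraction mapping principle each $f$ has $\delta_{p_f}$ as its unique invariant measure, so a common invariant measure forces a common fixed point, and the "no common invariant measure" hypothesis of Theorem~\ref{t.main} follows from the "no common fixed point" hypothesis. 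If you want to salvage your approach, the honest fix is to prove the boundary version of the main theorem (i.e., justify Remark~\ref{rem:boundary}) rather than to construct the global extensions.
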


Uniqueness of a stationary measure in this context is well known, e.g. see \cite{Hu}  or \cite{Sz}. H\"older continuity of the stationary measure in Theorem~\ref{t.IFS} is an immediate consequence of Theorem \ref{t.main}. Indeed, due to the contraction mapping principle, any contraction must have exactly one invariant measure, namely an atomic measure at a fixed point. Therefore, existence of a common invariant measure implies existence of a common fixed point. Since Theorem \ref{t.main} can be applied to the maps of a manifold with boundary (see Remark \ref{rem:boundary} below), Theorem \ref{t.IFS} follows.

We believe that Theorem \ref{t.IFS} is well known to the experts, but to the best of our knowledge it has not appeared in print in this generality. 

A famous specific example of one-dimensional stationary measures are those given by the Bernoulli convolutions. These are stationary measures for linearly contracting self-maps of the interval $[0,1]$,
\[
x\mapsto \lambda x \, \text{ and }\, x\mapsto 1-\lambda(1-x),
\]
where $\lambda\in[\frac{1}{2},1]$. These measures were extensively studied for at least 70 years; on the one hand, it is known \cite{E} that for some special values of $\lambda$ (for instance, for inverse golden ratio) the stationary measure $\msp_{\lambda}$ is singular and, moreover, has its Hausdorff dimension strictly less than~$1$. On the other hand, a famous result by Solomyak \cite{Sol} establishes absolute continuity of $\msp_{\lambda}$ for almost every $\lambda\in [\frac{1}{2},1]$. Moreover, due to an improvement by Shmerkin \cite{Shm} the set of exceptional values of the parameter $\lambda$ is in fact of zero Hausdorff dimension. For general surveys we refer the reader to \cite{PSS, Va, Va2}. For recent results on a lower bound on the Hausdorff dimension of~$\msp_{\lambda}$  see \cite{FF, KPV, Va3}.


Questions about exact dimensionality of stationary measures of iterated function systems and explicit formulas for their dimension (usually of Ledrappier-Young type, i.e. in terms of entropy and Lyapunov exponents) were heavily studied, e.g. see \cite{BK, F, FJ, Ho, HR}, and references there. Similar questions on stationary measures of random matrix products were studied in \cite{HS, Led, LL, R}. Notice that while any H\"older continuous measure must have positive Hausdorff dimension, without any additional assumptions it does not have to be exact dimensional, and, other way around, exact dimensional measure does not necessarily have to be H\"older continuous.


\section{Definitions and main results}\label{s.2}

\begin{definition}
    We say that the measure $\msp$ on $M$ is $(\alpha,C)$-H\"older, if
    \begin{equation}\label{eq:Holder-a-C}
        \forall x\in M \quad \forall r>0 \quad \msp(B_r(x))<C r^{\alpha}.
    \end{equation}
\end{definition}

\begin{definition}\label{d.Lip}
    For a diffeomorphism $f \in \Diff^1(M)$ we define
    \begin{equation*}
        \Lip(f) = \sup_{x, y \in M} \left( \frac{d(f(x), f(y))}{d(x, y)} \right)
    \end{equation*}
    and
    \begin{equation*}
        \fL (f)  = \max(\Lip(f), \Lip(f^{-1})).
    \end{equation*}
\end{definition}

Our first main result claims 
 that  H\"older regularity of stationary measures is completely abundant. The statement below is a more formal and detailed version of Theorem \ref{t.main}.
\begin{theorem}\label{t:main-1}
    Let $\mgr$ be a probability measure on $\Diff^1(M)$, satisfying the following assumptions:
    \begin{itemize}
        \item \textbf{(finite moment condition)} There exists $\gamma > 0$ and $ C_0>0$
        such that
        \begin{equation} \label{PosMomentCond}
            \int \fL(f)^{\gamma} \, d\mgr(f) < C_0;
        \end{equation}
        \item \textbf{(no common invariant measure)} There is no finite measure $m$ on $M$ such that $f_*m=m$ for $\mgr$-a.e.~$f$.
    \end{itemize}
    Then there exist $\alpha>0$ and $C$ such that any $\mgr$-stationary probability measure $\msp$ on $M$ is $(\alpha,C)$-H\"older.
\end{theorem}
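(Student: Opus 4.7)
Plan. The proof will rest on the iterated stationarity identity
\[
\msp(B_r(x)) = \int \msp(T_n^{-1}(B_r(x)))\, d\mgr^{\otimes n}(f_1,\dots,f_n), \qquad T_n = f_n\circ\dots\circ f_1,
\]
combined with two quantitative ingredients extracted from the two hypotheses, with the choice $n \sim \log(1/r)$ balancing them at the end.

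First I would use the finite moment condition. By submultiplicativity $\fL(T_n)\le\prod_i \fL(f_i)$ and independence of the $f_i$, one has $\E\,\fL(T_n)^\gamma\le C_0^n$; Markov's inequality then yields $\mgr^{\otimes n}(\fL(T_n)>e^{Kn})\le C_0^n e^{-K\gamma n}$, which is exponentially small for $K$ chosen large enough. On the complementary event, $T_n^{-1}(B_r(x))$ is contained in a ball of radius $re^{Kn}$ around $T_n^{-1}(x)$, so at least distorted orbits do not wreck the integrand.

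The second, and main, ingredient would be an exponential ``regularization'' estimate. Setting
\[
E_\alpha(\msp') := \sup_{x\in M,\, r>0}\frac{\msp'(B_r(x))}{r^\alpha}
\]
for a small $\alpha>0$, I would aim to prove that there exist $\alpha,\kappa,D>0$ with
\[
E_\alpha(\mgr^{*n}*\msp') \le e^{-\kappa n}\, E_\alpha(\msp') + D
\]
uniformly in the initial probability measure $\msp'$, which is essentially the content of Theorem~\ref{t:main-2} advertised in the introduction. Applied to a stationary $\msp$, for which $\mgr^{*n}*\msp=\msp$, it gives $E_\alpha(\msp)\le D/(1-e^{-\kappa n})<\infty$, i.e.\ the desired $(\alpha,C)$-H\"older property. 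A natural route to such an inequality is via the coupled (diagonal) random dynamical system on $M\times M$: the no common invariant measure hypothesis should prevent diagonal iterates from concentrating near the diagonal, and combined with the moment bound this should yield a Le~Page--type estimate
\[
\int d(f(x),f(y))^{-\alpha}\, d\mgr(f) \le c\, d(x,y)^{-\alpha} + D,\qquad c<1,
\]
for $\alpha$ small enough, from which the required control on $E_\alpha$ follows by iteration.

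The hard part is precisely this second ingredient. The no common invariant measure hypothesis is purely qualitative, and it must be upgraded to a uniform quantitative exponential contraction while only a $\gamma$-moment bound on Lipschitz constants is available. I expect this to rely on a compactness argument on stationary measures for the diagonal action on $M\times M$ --- any obstruction to spreading would, after passing to a limit, have to produce a common invariant measure on $M$, contradicting the assumption --- combined with a careful separation of the ``close orbits'' regime (where the local differential geometry of $M$ governs the distortion) and the ``separated orbits'' regime (where only moment tails of $\fL$ are available to rule out pathologies).
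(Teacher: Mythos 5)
Your overall architecture (deduce the theorem from a quantitative regularization estimate for $\mgr^{*n}*\msp_0$) matches the paper's, but the two steps you flag as ``the hard part'' contain genuine gaps, and the mechanism you propose for them does not work under these hypotheses. First, the Le~Page-type pointwise estimate $\int d(f(x),f(y))^{-\alpha}\,d\mgr(f)\le c\,d(x,y)^{-\alpha}+D$ with $c<1$ is false in this generality: for small $\alpha$ it forces $\E_{\mgr}\log\bigl(d(f(x),f(y))/d(x,y)\bigr)$ to be bounded below by a positive constant uniformly in $x\ne y$, i.e.\ every pair of nearby points must spread apart on average. ``No common invariant measure'' gives nothing of the sort --- already for $f$ a North--South circle map and $g$ a small irrational rotation, pairs near the attractor of $f$ contract on average; more seriously, many systems satisfying the hypotheses synchronize ($d(T_n x,T_n y)\to 0$ a.s.\ for nearby points), so $\E\, d(T_nx,T_ny)^{-\alpha}$ grows rather than contracts for every $n$, while the stationary measure is nevertheless H\"older. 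The paper's contraction (Proposition~\ref{t:contr}) comes from a different mechanism: the energy of the \emph{averaged} measure $\mgr*\msp$ is compared with the \emph{average of the energies} of the individual images $f_*\msp$ via the $L_2$ variance identity \eqref{ContrThmVarFormula}; the moment condition shows the latter barely changes, so failure of contraction forces the variance to be small, i.e.\ the normalized measures $\mTn_{\alpha,\eps}(f_*\msp)$ to be nearly deterministic, and a compactness argument then produces a measure with a deterministic image. (Note also that this limit object has a \emph{deterministic image}, not an invariant measure; these conditions are genuinely different, and Proposition~\ref{p:k} is needed to bridge them. Your sketch of the compactness step glosses over this.)

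Second, even granting your regularization inequality $E_\alpha(\mgr^{*n}*\msp')\le e^{-\kappa n}E_\alpha(\msp')+D$, the deduction for a stationary $\msp$ is circular: it yields $E_\alpha(\msp)\le D/(1-e^{-\kappa n})$ only if one already knows $E_\alpha(\msp)<\infty$, and $E_\alpha(\msp)=\infty$ is perfectly consistent with the inequality (this is exactly what happens, for all $n$, when $\msp_0$ is atomic and $\mgr$ is finitely supported). This circularity is the reason the paper replaces $d(x,y)^{-\alpha}$ by the cut-off kernels $U_{\alpha,\eps}$: the cut-off energy satisfies the a priori bound $\En_{\alpha,\eps}(\msp)\le C_{\alpha}''\eps^{-\alpha}$ for \emph{every} probability measure, so iterating the contraction with $\eps=\lambda^{n/\alpha}$ produces a finite bound unconditionally, hence the H\"older estimate on scales $r>\kappa^n$ and, for stationary $\msp$, on all scales. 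Without a cut-off (or some other a priori finiteness input) your scheme proves at best the \emph{existence} of one H\"older stationary measure, not the H\"older property of every stationary measure.
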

 \begin{remark}
 Denote by $\mM$ the space of all probability measures on $\Diff^1(M)$ equipped with weak-$\ast$ topology. If $\mK\subset \mM$ is a compact set, such that the assumptions of Theorem \ref{t:main-1} (with uniform $\gamma>0$ and $C_0>0$) hold for all $\mgr\in\mK$, then $\alpha$ and $C$ in the conclusion can be chosen uniformly in  $\mgr\in\mK$.
 \end{remark}
 \begin{remark}
Notice that the condition on absence of common invariant measures cannot be 
dropped without adding some other assumptions on a model. Indeed, otherwise it could happen that all maps from $\text{supp}\,\mgr$ preserve the same non-H\"older measure, e.g. some atomic measure. 
 \end{remark}

Theorem~\ref{t:main-1} states that all the stationary measures for a random dynamical system with no common invariant measure are uniformly H\"older. Thus, it is natural to ask if (averaged) iterations $\mgr^{*n}*\msp_0$ of a given non-stationary initial measure $\msp_0$ become ``more and more H\"older'' as the number $n$ of iterations increases.

However, if both initial measure $\msp_0$ and the measure $\mgr$ are atomic (for instance, if $\msp_0$ is a Dirac measure, and the measure $\mgr$ is supported on a finite number of diffeomorphisms), then any of these iterated images has atoms, and thus cannot be H\"older. 
Hence, one can only hope (and expect) the H\"older property on not-too-small scales. And indeed, it turns out to be the case: after $n$ iterations one has H\"older property on all scales above an exponentially small one. Namely, we have the following theorem, our second main result:
\begin{theorem}\label{t:main-2}
Assume that $\mgr$ satisfies the assumptions of Theorem~\ref{t:main-1}. Then there exist
$\alpha>0$, $C$ and $\kappa<1$ such that for any initial measure $\msp_0$, any number of iterations $n\in \bbN$, and any $x\in M$ one has:
\begin{equation}\label{e.scale1}
\text{if}\ \  r>\kappa^n \ \ \text{then}\ \ (\mgr^{*n}*\msp_0)(B_r(x)) < C r^{\alpha}.
\end{equation}
\end{theorem}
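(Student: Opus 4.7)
My plan is to mirror the scheme of Theorem~\ref{t:main-1} while iterating its central estimate from an arbitrary initial measure, rather than closing the recursion up by stationarity. Define the H\"older content of a probability measure $\msp$ at scale $r>0$ by
\[
H_\alpha(\msp;r) := \sup_{x\in M,\ r'\ge r} (r')^{-\alpha}\, \msp(B_{r'}(x)),
\]
so that $\msp$ is $(\alpha,C)$-H\"older precisely when $\sup_{r>0} H_\alpha(\msp;r)\le C$. Since $\msp$ is a probability measure we always have the crude bound $H_\alpha(\msp;r)\le r^{-\alpha}$. Writing $\nu_n := \mgr^{*n}*\msp_0$, the theorem asks for $H_\alpha(\nu_n;\kappa^n)\le C$, uniformly in $n$ and $\msp_0$.

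The main step is to extract from the proof of Theorem~\ref{t:main-1} a single-step contraction of the form
\[
H_\alpha(\mgr^{*N}*\msp;\,r) \;\le\; \lambda\, H_\alpha(\msp;\,r/L) + C_1,
\]
valid for every probability measure $\msp$ on $M$, with constants $N\in\bbN$, $\lambda\in(0,1)$, $L>1$, $\alpha>0$, and $C_1>0$ depending only on $\mgr$, and crucially arranged so that $\lambda L^\alpha<1$. Applied to a stationary measure this closes up via $\mgr^{*N}*\msp=\msp$ and forces $H_\alpha(\msp;0^+)\le C_1/(1-\lambda)$, which is exactly the content of Theorem~\ref{t:main-1}. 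The factor $L$ reflects how much $\fL(f)$ stretches radii on average (controlled by the finite moment condition), while $\lambda<1$ must be produced by the no-common-invariant-measure hypothesis, which guarantees a definite spreading effect over $N$ random steps. Since $L^\alpha\to 1$ as $\alpha\to 0^+$, one can always shrink $\alpha$ to achieve $\lambda L^\alpha<1$, possibly at the expense of $\lambda$ and $C_1$.

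Granting this estimate, the rest is routine. Iterating $k := \lfloor n/N\rfloor$ times gives
\[
H_\alpha(\nu_{kN};\,r) \;\le\; \lambda^k H_\alpha(\msp_0;\,r/L^k) + \frac{C_1}{1-\lambda} \;\le\; (\lambda L^\alpha)^k\, r^{-\alpha} + \frac{C_1}{1-\lambda},
\]
using only the trivial bound $H_\alpha(\msp_0;r')\le (r')^{-\alpha}$. Setting $\kappa_0 := (\lambda L^\alpha)^{1/(\alpha N)}\in(0,1)$, whenever $r>\kappa_0^{kN}$ the first term is at most $1$, yielding $H_\alpha(\nu_{kN};r)\le 1+C_1/(1-\lambda)$. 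The residual $n-kN<N$ iterations are absorbed into a slightly larger $C$ and a slightly smaller $\kappa$ using $\int\fL(f)^\alpha\,d\mgr<\infty$. This produces the claimed H\"older bound on all scales $r>\kappa^n$.

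The main obstacle is the extraction of the contraction inequality. If the proof of Theorem~\ref{t:main-1} proceeds by establishing a recursive estimate of exactly this type (the most common route in H\"older-regularity arguments), then the extension to non-stationary initial data is essentially bookkeeping. If instead it relies on a soft compactness or limiting argument that presupposes stationarity, the contraction has to be reconstructed quantitatively by revisiting that argument, which is where the real work lies. A secondary difficulty is the balancing of $\alpha$: one needs $\lambda L^\alpha<1$ without making $C_1$ or $\lambda$ degenerate, and the interplay between the moment exponent $\gamma$ and the H\"older exponent $\alpha$ has to be handled carefully.
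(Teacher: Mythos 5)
Your proposal is a conditional argument: everything is made to rest on the posited contraction
\[
H_\alpha(\mgr^{*N}*\msp;\,r)\ \le\ \lambda\, H_\alpha(\msp;\,r/L)+C_1,\qquad \lambda L^{\alpha}<1,
\]
and that inequality is exactly what is \emph{not} proven. This is a genuine gap, not bookkeeping, for several reasons. First, there is nothing to ``extract'': in the paper Theorem~\ref{t:main-1} is \emph{deduced from} Theorem~\ref{t:main-2} (via $\mgr^{*n}*\msp_0=\msp_0$ and $\kappa^n\to 0$), so no independent proof of the stationary case exists from which a recursive estimate could be harvested. Second, the quantitative contraction the paper actually establishes (Proposition~\ref{t:contr} and Corollary~\ref{c:En}) concerns the quadratic cut-off energy $\En_{\alpha,\eps}$, not a sup-type H\"older content. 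Its proof is intrinsically an $L_2$ argument: one writes $\tEn_{\alpha,\eps}$ as a squared Hilbert-space norm of the density $\dens_{\alpha,\eps}[\msp]$, applies the variance identity $\E\langle v-\bv,v-\bv\rangle=\E\langle v,v\rangle-\langle\bv,\bv\rangle$ to the random vectors $\dens_{\alpha,\eps}[f_*\msp]$, and concludes that failure of contraction forces near-alignment of these vectors, hence (by compactness) a measure with a deterministic image, contradicting the hypothesis. A sup-type quantity such as your $H_\alpha(\cdot\,;r)$ admits no variance identity, so this mechanism does not transfer. Third, the naive one-step bound for $H_\alpha$ only gives a factor $\E_{\mgr}\bigl[\fL(f)^{\alpha}\bigr]\ge 1$, and converting the ``no common invariant measure'' hypothesis into a genuine factor $\lambda<1$ for the sup-quantity is precisely the hard analytic content that your proposal defers.

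For comparison, the paper's route replaces $H_\alpha$ by the family $\En_{\alpha,\eps}$, for which (i) the one-step bound $\En_{\alpha,\eps}(\mgr*\msp)<\max(\lambda\En_{\alpha,\eps}(\msp),\tC)$ holds with the \emph{same} cut-off $\eps$ on both sides (no radius dilation $L$, hence no $\lambda L^{\alpha}<1$ balancing), (ii) every probability measure satisfies the a priori bound $\En_{\alpha,\eps}(\msp)\le C''_\alpha\eps^{-\alpha}$, and (iii) a Markov inequality converts an energy bound into $\msp(B_r(x))\lesssim r^{\alpha/2}$ for all $r>\eps$. Iterating (i) $n$ times and choosing $\eps=\lambda^{n/\alpha}$ gives the theorem. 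Your outer iteration scheme is structurally parallel to this and your handling of the residual $n-kN$ steps and of the exponent $\alpha$ is fine in principle; but without a proof of the contraction for your chosen functional the argument does not stand, and the functional you chose is one for which the paper's method of proof does not apply.
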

\begin{remark}
Equivalently, the conclusion (\ref{e.scale1}) in Theorem \ref{t:main-2} can be replaced by the following one (perhaps, with different values of the constants):
$$
\text{\it for any}\ \  r>0 \ \ \text{\it we have}\ \ (\mgr^{*n}*\msp_0)(B_r(x)) < C (r^{\alpha} + \kappa^n).
$$
\end{remark}

  Notice that  Theorem \ref{t:main-1} is an immediate consequence of~Theorem~\ref{t:main-2}. Indeed, if $\msp_0$ is a stationary measure, then $\mgr^{*n} * \msp_0 = \msp_0$, and since $\kappa^n \to 0$ as ${n \to \infty}$, applying Theorem \ref{t:main-2} to the measure $\msp_0$ shows that $\msp_0$ is H\"older continuous, and hence Theorem \ref{t:main-1} follows.

Finally, in some situations one has to consider non-stationary random dynamical systems, where the maps $f_i$ applied on different steps are chosen with respect to different distributions $\mgr_i$. One example of such situation is non-stationary version of the Furstenberg theorem \cite{GK, Go}, related to the Anderson Localisation for random Schr\"odinger operators in $l^2(\mathbb{Z})$, usually referred to as {\it Anderson Model}, in presence of a non-constant background potential.


Our third theorem states that in the non-stationary setting, under natural genericity assumptions the random (averaged) image of any given probability measure on $M$ after $n$ iterations satisfies the H\"older property on not-too-small scales.
\begin{theorem} \label{t:main-3}
    Let $\mK\subset \mM$ be a compact, satisfying the following conditions:
    \begin{itemize}
        \item \textbf{(uniform finite moment condition)} There exists $\gamma>0$, $C_0$ such that for any $\mgr\in\mK$ one has
        \begin{equation} \label{PosMomentCondComp}
            \int \fL(f)^\gamma \, d \mgr(f) < C_0;
        \end{equation}
        \item \textbf{(no deterministic images)} For any $\mgr\in\mK$ there are no probability measures $\msp,\msp'$ on~$M$ such that $f_* \msp = \msp'$ for $\mgr$-almost all $f\in \Diff^1(M)$.
    \end{itemize}
    Then there exist $\alpha>0$, $C$, $\kappa<1$ such that for any initial measure $\msp_0$, any $n$, and any distributions $\mgr_1,\dots, \mgr_n\in \mK$ the $n$-th image of $\msp_0$ satisfies $(\alpha,C)$-H\"older property on the scales up to $\kappa^n$:
    \[
        \forall x\in M \quad \forall r>\kappa^n \quad (\mgr_n * \dots * \mgr_1 * \msp_0)(B_r(x)) < Cr^{\alpha}.
    \]
\end{theorem}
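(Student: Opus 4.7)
The plan is to establish Theorem~\ref{t:main-3} by proving a uniform ``one-step spreading'' inequality for the averaging operator $\msp \mapsto \mgr * \msp$, with $\mgr$ ranging over $\mK$, and then iterating it $n$ times. I would work with a H\"older-type concentration functional such as the truncated sup
$$
\En_{\alpha, r_0}(\msp) = \sup_{x \in M,\, r \ge r_0} \frac{\msp(B_r(x))}{r^\alpha},
$$
or, for strict convexity (which is helpful in a later contradiction argument), an $L^p$-type surrogate. The truncation at scale $r_0$ is necessary because $\msp_0$ may be atomic and so has no untruncated H\"older energy. The target is a contraction-with-loss inequality
\begin{equation*}
    \En_{\alpha, r_0}(\mgr * \msp) \le \lambda\, \En_{\alpha, r_0/L}(\msp) + C,
\end{equation*}
uniform in $\mgr \in \mK$, with $\lambda < 1$, $C$ absolute, and $L$ an effective Lipschitz factor coming from~\eqref{PosMomentCondComp}.

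Granted the one-step contraction, the rest is a telescoping calculation. Iterating $n$ times along $\mgr_1, \dots, \mgr_n \in \mK$ and summing the geometric series gives
$$
\En_{\alpha, r_0}(\mgr_n * \dots * \mgr_1 * \msp_0) \le \lambda^n \En_{\alpha, r_0/L^n}(\msp_0) + \frac{C}{1-\lambda}.
$$
Using the trivial bound $\En_{\alpha,\delta}(\msp)\le \delta^{-\alpha}$ for any probability measure and choosing $r_0 = \kappa^n$ with $\kappa \in (L\lambda^{1/\alpha}, 1)$ — a range which is non-empty once $\alpha$ is small enough that $L^\alpha \lambda < 1$ — yields the desired $(\alpha, C')$-H\"older bound at all scales $r \ge \kappa^n$.

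The key step, where the two hypotheses meet, is the one-step contraction. The finite moment condition~\eqref{PosMomentCondComp} handles the scale shift: a ball of radius $r$ is pulled back under $f$ into a ball of radius at most $\fL(f)\cdot r$, so for sufficiently small $\alpha$ the quantity $\int \fL(f)^\alpha \,d\mgr(f)$ is uniformly bounded over $\mgr \in \mK$, absorbing the $r_0 \to r_0/L$ loss. The no-deterministic-image hypothesis is what drives the strict contraction beyond the trivial convex upper bound
$$
\En_{\alpha, r_0}(\mgr * \msp) \le \int \En_{\alpha, r_0}(f_*\msp) \,d\mgr(f),
$$
because saturating this inequality would force the measures $f_*\msp$ to concentrate at the same place for $\mgr$-almost every $f$, which is exactly what the hypothesis forbids.

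The main obstacle will be converting this qualitative obstruction into a \emph{uniform} numerical gap $\lambda < 1$, simultaneously for all $\mgr \in \mK$ and all $\msp$. I would argue by compactness and contradiction: if no such gap existed, a sequence $(\mgr^{(k)}, \msp^{(k)})$ would nearly saturate the convex bound at some near-maximizing balls $B_{r^{(k)}}(x^{(k)})$. Compactness of $\mK$ extracts a limit $\mgr^{(\infty)} \in \mK$; after rescaling the ambient metric near the near-maximizing balls (so the concentration does not collapse in the limit) and passing to weak-$\ast$ limits of the rescaled measures $\msp^{(k)}$ and $f_*\msp^{(k)}$, one should obtain limiting probability measures $\msp^{(\infty)}$ and $\msp'^{(\infty)}$ with $f_*\msp^{(\infty)} = \msp'^{(\infty)}$ for $\mgr^{(\infty)}$-almost every $f$, contradicting the ``no deterministic images'' hypothesis for $\mgr^{(\infty)}$. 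The delicate technical points I anticipate are (i) choosing a concentration functional for which near-saturation admits a clean, limit-friendly characterization (so an $L^p$ surrogate is preferable to the raw sup), (ii) preserving tightness of the rescaled objects through the weak-$\ast$ limit, and (iii) reconciling the ambient manifold geometry with the ``tangent-space'' picture produced by rescaling at scales well below the injectivity radius.
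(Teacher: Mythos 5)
Your overall architecture --- a one-step contraction-with-loss for a truncated concentration functional, iterated $n$ times with the truncation scale chosen as $\kappa^n$, plus a compactness/contradiction argument to make the gap $\lambda<1$ uniform over $\mK$ --- is the same skeleton as the paper's (Proposition~\ref{t:contr}, Proposition~\ref{rem:uniformContr}, Corollary~\ref{c:En}, Lemma~\ref{l:holdFromEn}). But the key step, as you state it, has a genuine gap. Near-saturation of the convex bound $\En_{\alpha,r_0}(\mgr*\msp)\le\int\En_{\alpha,r_0}(f_*\msp)\,d\mgr(f)$ for the sup-over-balls functional only constrains the measures $f_*\msp$ \emph{on their near-maximizing balls}: it forces those balls to (nearly) coincide and to carry (nearly) the same mass, and says nothing about the rest of each measure. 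That is far weaker than $f_*\msp=\msp'$ for $\mgr$-a.e.\ $f$, which is what the hypothesis forbids; ``concentrating at the same place'' is entirely compatible with ``no deterministic images.'' So the contradiction does not follow from saturation as claimed. You flag this yourself in point (i), but that point is not a technical refinement --- it is the heart of the proof. The paper's resolution is to replace the sup by the $L_2$-energy $\tEn_{\alpha,\eps}(\msp)=\|\dens_{\alpha,\eps}[\msp]\|_{L_2}^2$ of a smoothed density; then the Hilbert-space variance identity (Lemma~\ref{l:var} and~\eqref{ContrThmVarFormula}) shows that failure of contraction forces the \emph{entire} functions $\dens_{\alpha,\eps}[f_*\msp]$ to align in $L_2$, hence the entire normalized measures $\mTn_{\alpha,\eps}(f_*\msp)$ to be close in total variation to a single deterministic measure (Lemma~\ref{l.West}), which combined with Proposition~\ref{prop:WassEst} yields genuine limit measures $m,\tilde m$ on $M$ with $f_*m=\tilde m$ a.e.

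The second problem is your limit procedure. If the near-maximizing radii $r^{(k)}$ tend to $0$, rescaling the metric around the maximizing balls linearizes the dynamics: the rescaled limits of the maps are the differentials $Df(x^{(\infty)})$, and a ``deterministic image'' statement for that tangent-space system does not contradict the hypothesis, which concerns the actual diffeomorphisms acting on probability measures on $M$. (All maps in $\supp\mgr$ could have the same derivative at a point without having a common deterministic image.) You would need to show the limit measure is atomic --- so that no rescaling is needed and $f(x^{(\infty)})$ is a.s.\ constant --- or find another way to descend from the blow-up to $M$; neither is addressed. The paper avoids this entirely because the measures $\mTn_{\alpha,\eps}(\msp)$ are global probability measures on the compact manifold $M$, so their weak-$*$ limits exist without rescaling and the contradiction is immediate. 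A smaller but real issue: since only $\int\fL(f)^\gamma\,d\mgr<C_0$ is assumed, there is no uniform Lipschitz factor $L$; the scale shift $r_0\to r_0/\fL(f)$ and the factor $\fL(f)^\alpha$ must be handled by splitting off the tail $\{\fL(f)\ge R\}$, as the paper does in Proposition~\ref{lm:tailsEst}. In short, the skeleton is right, but the functional you chose cannot carry the decisive step; some $L_2$ (or at least strictly convex, globally defined) surrogate is not merely ``preferable'' --- it is what makes the argument work.
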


\begin{remark} \label{rem:boundary}
    Theorems \ref{t:main-1}, \ref{t:main-2} and \ref{t:main-3} also hold for the case of $M$ being a compact manifold with a boundary. In this case we consider $\mgr$ that is a probability measure on the space of all diffeomorphisms of $M$ on the image.
\end{remark}

\begin{remark}
    While we formulate the results for $C^1$-diffeomorphisms, smoothness is not really used in the proof. We only need the maps to be bi-Lipschitz, so we notice here that 
    Theorems \ref{t:main-1}, \ref{t:main-2} and \ref{t:main-3} also hold for random dynamical systems defined by bi-Lipschitz maps, for probability measure $\mgr$ defined on the space of bi-Lipschitz homeomorphisms: $\{f \in \Homeo(M) \mid \fL(f) < \infty\}$.
\end{remark}

\subsection{``No invariant measure'' vs ``no deterministic images'' conditions}

The ``no deterministic images'' assumption in Theorem \ref{t:main-3} is a natural generalisation to the non-stationary setting of the ``no common invariant measures'' one. Indeed, in the non-stationary setting it does not make sense to directly compare measures before and after applying a random map, since by taking any (bounded) family of maps $g_i$ and considering a family of conjugations 
$f_i\mapsto \tilde{f_i}:= g_i \circ f_i \circ g_{i-1}^{-1}$ one can turn 
random orbits $x_i=f_i(x_{i-1})$ into 
 random orbits $y_i=\tilde{f_i}(y_i)$ given by $y_i=g_i(x_i)$, not changing any essential properties of the non-stationary random dynamical system, but destroying a common invariant measure.

Nevertheless, notice that in the stationary case, when both of these conditions make sense, ``no common invariant measure'' and ``no measure with deterministic images'' are essentially different: 
absence of one-step deterministic images is a strictly stronger assumption than absence of a common invariant measure. Indeed, for instance, one can take two diffeomorphisms $f,g$ of the circle, where $f$ is a North-South map with two fixed points (so the only invariant measures are concentrated at these points), and $g$ is a sufficiently small irrational rotation (thus having only one invariant measure, namely Lebesgue measure on the circle). Thus, $f$ and $g$ have no common invariant measure. However, the composition $g^{-1}\circ f$ (that is close to $f$) has a fixed point $x_0$ (near a fixed point of the map $f$). The relation $g^{-1}\circ f(x_0)=x_0$ implies that $f(x_0)=g(x_0)$, and thus the Dirac measure at $x_0$ has a deterministic image.

However, being able to compress several steps into a single one makes these two assumptions equivalent for their usage in the above theorems:
\begin{proposition}\label{p:k}
Let $\mgr\in\mM$ be a probability measure on $\Diff^1(M)$ such that there is no common invariant measure for all $f\in \text{supp}\,\mgr$. Then there exists $k\in \mathbb{N}$ such that $\mgr^{*k}$ satisfies ``no deterministic images'' condition (i.e. there are no probability measures $\msp,\msp'$ on~$M$ such that $f_* \msp = \msp'$ with $f=f_k\circ\ldots \circ f_1$ for $\mgr\times \mgr\times \ldots \times \mgr$-almost all $(f_1, f_2, \ldots, f_k)\in \left(\Diff^1(M)\right)^k$).
\end{proposition}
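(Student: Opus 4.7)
The plan is to argue by contradiction. Suppose that for every $k\in\mathbb{N}$ there is a deterministic-image pair $(\msp_k,\msp'_k)$ for $\mgr^{*k}$, so $(f_k\circ\cdots\circ f_1)_*\msp_k=\msp'_k$ for $\mgr^{\otimes k}$-almost every $(f_1,\ldots,f_k)$; from these I will build a probability measure $\msp_*$ with $f_*\msp_*=\msp_*$ for every $f\in\supp\mgr$, contradicting the hypothesis. A first routine reduction: the map $(f_1,\ldots,f_k)\mapsto(f_k\circ\cdots\circ f_1)_*\msp$ is continuous from $(\Diff^1(M))^k$ (product $C^0$-topology) into the weak-$*$ compact space $\mathcal{P}(M)$, so an almost-everywhere constant value is in fact a pointwise value on $(\supp\mgr)^k$. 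Define
\[
\mathcal{I}_k:=\bigl\{\msp\in\mathcal{P}(M)\colon(f_k\circ\cdots\circ f_1)_*\msp\text{ is independent of }(f_1,\ldots,f_k)\in(\supp\mgr)^k\bigr\};
\]
this set is weak-$*$ closed and convex, and by assumption nonempty for every $k$.

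The central structural observation is the nesting $\mathcal{I}_{k+1}\subset\mathcal{I}_k$. Indeed, if $\msp\in\mathcal{I}_{k+1}$, fix any $f_0\in\supp\mgr$; for arbitrary $f_i,g_i\in\supp\mgr$ one has
\[
(f_0)_*(f_k\circ\cdots\circ f_1)_*\msp=(f_0\circ f_k\circ\cdots\circ f_1)_*\msp=(f_0\circ g_k\circ\cdots\circ g_1)_*\msp=(f_0)_*(g_k\circ\cdots\circ g_1)_*\msp,
\]
where the middle equality is exactly $\msp\in\mathcal{I}_{k+1}$. Since $f_0$ is a diffeomorphism, $(f_0)_*$ is injective on measures, so the outer $(f_0)_*$ cancels and $\msp\in\mathcal{I}_k$. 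Compactness of $\mathcal{P}(M)$ together with the finite intersection property then forces $\mathcal{I}_\infty:=\bigcap_{k\ge 1}\mathcal{I}_k$ to be nonempty, weak-$*$ closed and convex.

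To conclude, on $\mathcal{I}_\infty$ I define the shift $T\msp:=f_*\msp$ (well defined since $\mathcal{I}_\infty\subset\mathcal{I}_1$ gives the same value for every $f\in\supp\mgr$); it coincides with the Markov convolution $\mgr*\msp$. A short check using $\msp\in\mathcal{I}_{k+1}$ shows $T(\mathcal{I}_\infty)\subset\mathcal{I}_\infty$, because $(f_k\circ\cdots\circ f_1)_*(T\msp)=(f_k\circ\cdots\circ f_1\circ f)_*\msp$ is constant in $(f_1,\ldots,f_k)$. Since $T$ is continuous and affine on the weak-$*$ compact convex set $\mathcal{I}_\infty$, a Krylov--Bogoliubov argument (weak-$*$ limit of Ces\`aro averages $\tfrac{1}{N}\sum_{n=0}^{N-1}T^n\msp_0$ for any $\msp_0\in\mathcal{I}_\infty$, all of which stay in $\mathcal{I}_\infty$ by convexity) produces a fixed point $\msp_*\in\mathcal{I}_\infty$, that is $f_*\msp_*=\msp_*$ for every $f\in\supp\mgr$---the desired contradiction. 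The main obstacle, and the only place where a non-trivial idea is needed, is precisely the nesting $\mathcal{I}_{k+1}\subset\mathcal{I}_k$: intuitively one might expect the deterministic-image sets to grow rather than shrink with $k$, and getting the correct direction relies on the cancellation $(f_0)_*^{-1}$ that only diffeomorphisms afford.
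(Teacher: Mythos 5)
Your argument is correct, but it is organized differently from the paper's. The paper, for each $k$, takes the deterministic orbit segment $\msp_{k,0},\dots,\msp_{k,k-1}$ of the assumed measure $\msp_k$, forms its time average $\bar{\msp}_k$, observes that $f_*\bar{\msp}_k-\bar{\msp}_k=\frac1k(f_*\msp_{k,k-1}-\msp_{k,0})$ is $O(1/k)$ in total variation, and extracts the common invariant measure as a limit point of the $\bar{\msp}_k$ as $k\to\infty$. You instead intersect first and average second: the nested closed convex sets $\mathcal{I}_k$ yield, by compactness and the finite intersection property, a single measure whose images are deterministic for \emph{all} times, and only then does a genuine Krylov--Bogolyubov argument on $\mathcal{I}_\infty$ produce the fixed point. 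Both proofs ultimately rest on the same cancellation, namely injectivity of $f_*$ for a diffeomorphism $f$: you use it explicitly to get the nesting $\mathcal{I}_{k+1}\subset\mathcal{I}_k$, whereas the paper uses it implicitly in the parenthetical claim that the intermediate images are deterministic (without which the recursion $\msp_{k,j}=f_*\msp_{k,j-1}$ for $\mgr$-a.e.\ $f$ is not even well defined). What your version buys is that this hidden step is isolated as a clean lemma, and the limiting object is a bona fide invariant measure of a single continuous affine map rather than a limit of almost-invariant averages; the price is the extra bookkeeping of upgrading ``$\mgr^{\otimes k}$-a.e.'' to ``everywhere on $(\supp\mgr)^k$'' (your continuity reduction, which is fine since a closed set of full measure contains the support and $\supp(\mgr^{\otimes k})=(\supp\mgr)^k$ for the separable metrizable space $\Diff^1(M)$), plus the compactness/intersection step that the paper's single passage to the limit avoids.
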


\begin{proof}[Proof of Proposition~\ref{p:k}]
Assume that for arbitrarily large $k$ there exist measures $\msp_k, \msp'_k$ such that $f_* \msp_k = \msp'_k$ for $\mgr^{*k}$-a.e.~$f\in\Diff^1(M)$. For any such $k$ consider the intermediate images (note that intermediate images must be deterministic, too):
\[
\msp_{k,0}:=\msp_k, \quad \msp_{k,j}=f_*\msp_{k,j-1}, \quad j=1,\dots, k, \quad \text{for $\mgr$-a.e. $f\in\Diff^1(M)$}.
\]
Let $\bar{\msp}_{k}$ be the time averages of these measures:
\[
\bar{\msp}_{k}:=\frac{1}{k}\sum_{j=0}^{k-1} \msp_{k,j}.
\]
Then, due to the standard time-averaging (Krylov-Bogolyubov) argument any accumulation point~$\bar{\msp}$ of the measures $\bar{\msp}_k$ is a common invariant measure for $\mgr$-a.e. $f\in \Diff^1(M)$. Namely, for $\mgr$-a.e. $f\in\Diff^1(M)$ one has
\[
f_* \bar{\msp}_k = \msp_k + \frac{1}{k}(f_* \msp_{k,k-1} -\msp_{k,0}),
\]
providing $f_* \bar{\msp}=\msp$ after passing to a limit point $\bar{\msp}$. 
This leads to a contradiction with the assumption of the proposition.
\end{proof}

Notice that Proposition \ref{p:k} implies that it suffices to prove Theorems \ref{t:main-1} and \ref{t:main-2} under a more restrictive ``no deterministic images'' assumption instead of the ``no common invariant measures'' condition. In particular, Theorem~\ref{t:main-2} can be considered a corollary of Theorem~\ref{t:main-3}.

\section{Ideas of the proof}\label{s.ideas}

In this section we informally present the ideas and motivations behind the proofs of our main results, before passing to the formal proofs in Section~\ref{s:proofs}.


We start with slightly weaker, but less technically complicated arguments, that allow us to establish the \emph{existence} of a H\"older stationary measure.

As the first step, for any $\alpha>0$ and a  measure $\msp$ on $M$ define its \emph{energy} as 
\begin{equation}\label{eq:En-def}
\En_{\alpha}(\msp):= \iint_M d(x,y)^{-\alpha} d\msp(x) d\msp(y)
\end{equation}

An immediate application of Markov's inequality shows that when $\En_{\alpha}(\msp)$ is finite, the measure is $\frac{\alpha}{2}$-H\"older, and the H\"older constant can be estimated in terms of the energy. Indeed, for any $x\in M$ and $r>0$ one has
\begin{equation}\label{eq:Markov}
\En_{\alpha}(\msp) \ge (2r)^{-\alpha} \msp(B_r(x))^2,
\end{equation}
as pairwise distances between the points of $B_{r}(x)$ do not exceed~$2r$. Hence, to establish the H\"older property of a measure it would suffice to show that its energy is finite.

The key step here is to show that for a sufficiently small $\alpha$, convolution with $\mgr$ makes sufficiently high energies decrease. Namely, there exists $\alpha_0>0$ and constants $C_0>0$, $0<\lambda<1$ such that for any $\alpha<\alpha_0$ one has
\begin{equation}\label{eq:En-decr}
\En_{\alpha}(\mgr*\msp) \le \lambda \En_{\alpha}(\msp) + C_0.
\end{equation}

To do so, we re-interpret the energy~\eqref{eq:En-def} in terms similar to $L_2$-norm of a convenient  function. Namely, for a given measure~$\msp$, consider the function
\begin{equation}\label{eq:theta-def}
\dens_{\alpha}[\msp](y):=\int_M \varphi_{\alpha}(d(x,y)) \, d\msp(x),
\end{equation}
where $\varphi_{\alpha}(r):=r^{-\frac{k+\alpha}{2}}$
and $k$ is the dimension of the manifold~$M$. Take the square of the $L_2$-norm (with respect to the Lebesgue 
measure 
 on~$M$) of this function,
\begin{equation}\label{eq:tEn-def}
\tEn_{\alpha}(\msp):=\int_M \dens_{\alpha}[\msp](y)^2 \, d\Leb(y).
\end{equation}
It turns out that there exists constant $c_{\alpha}$ 
such that for any $\msp$
\begin{equation}\label{eq:En-tEn}
 \tEn_{\alpha}(\msp) =(c_{\alpha} + o(1)) \cdot \En_{\alpha}(\msp),
\end{equation}
where $o(1)$ tends to zero as either of energies $\En_{\alpha}$, $\tEn_{\alpha}$ tend to infinity.
Indeed, the right hand side of~\eqref{eq:tEn-def} can be rewritten as
\begin{multline}\label{eq:tEn-kernel}
\tEn_{\alpha}(\msp) = \int_M \dens_{\alpha}^2[\msp](y) \, d\Leb(y)
\\ =
\iiint_{M\times M\times M} \varphi_{\alpha}(d(x,y)) \varphi_{\alpha}(d(z,y)) \, d\msp(x) d\msp(z) \, d\Leb(y)
\\ =
\iint_{M\times M} K_{\alpha}(x,z) d\msp(x) d\msp(z),
\end{multline}
where
\begin{equation}\label{eq:K-alpha}
K_{\alpha}(x,z) := \int_M \varphi_{\alpha}(d(x,y)) \varphi_{\alpha}(d(z,y)) \, d\Leb(y).
\end{equation}
If $M$ was replaced by the Euclidean space $\R^k$, the kernel~\eqref{eq:K-alpha} would have exactly the form $c_{\alpha} |x-z|^{-\alpha}$, where $c_{\alpha}$ is a constant, due to the symmetry and scaling arguments ($K_{\alpha}(x,z)$ can depend only on the distance between $x$ and $z$ and should scale as its $\alpha$'s inverse power). Thus, on $\R^k$ one would have exact proportionality $c_{\alpha} \En_{\alpha}= \tEn_{\alpha}$. Meanwhile, for a general compact manifold $M$ high values of energy $\En$ can come only from points close to each other, and on a small scale a Riemannian manifold is almost Euclidean.

Finally, the key argument goes by contradiction. Namely, for a small~$\alpha$ the energy $\En_{\alpha}$ is almost unchanged by applying a given diffeomorphism~$f$, as the distances are changed at most $\fL(f)$ times, and their~$\alpha$'s powers are changed by a factor close to~$1$. Due to~\eqref{eq:En-tEn}, this implies that for a large value of energy the same applies to~$\tEn_{\alpha}$.

However, the latter energy admits an $L_2$-norm interpretation: the definition (\ref{eq:tEn-def}) is the squared length $\langle \rho, \rho \rangle_{L_2(M, \Leb)}$ of a vector (function) $\rho\in L_2(M)$, linearly associated to the measure $\msp$.  In particular, if the $L_2$-norm of the average of images~$f_* \msp$ (where $f$ is distributed w.r.t.~$\mgr$) does not decrease, it means that (most of) these vectors are essentially (almost) aligned. Considering the non-probability measure
\begin{equation}\label{eq:Theta-m-def}
\mT_{\alpha}[\msp]:= \dens_{\alpha}[\msp]^2(y) \, dLeb(y),
\end{equation}
we see that the (almost) alignment of the vectors implies that the corresponding normalized (probability) measure
$\mTn_{\alpha}[\msp]:=\frac{1}{\tEn_{\alpha}(\msp)} \mT_{\alpha}[\msp]$ on~$M$
is a measure with (almost) deterministic image. Passing to the limit as~$\alpha$ tends to~0, we find a measure with the deterministic image, thus obtaining a contradiction.

The relation~\eqref{eq:En-decr} that we have (non-rigorously) established implies that when the energy $\En_{\alpha}(\msp)$ is large, the passage to the averaged image $\mgr*\msp$ decreases it. This already suffices to establish the \emph{existence} of a H\"older stationary measure. Indeed, take any measure $\msp_0$ of finite $\alpha$-energy (for instance, the Lebesgue measure on $M$). Consider the sequence of its averaged images,
\[
\msp_n:=\mgr*\msp_{n-1}, \quad n=1,2,\dots;
\]
their $\En_{\alpha}$ energies then stay uniformly bounded. Hence, the same bound for the energy applies to their Cesaro averages $\bar{\msp}_n:=\frac{1}{n} \sum_{j=0}^{n-1} \msp_j$. Finally, any accumulation point of this  latter sequence is necessarily a stationary measure, is of finite energy, and, hence, H\"older regular. This completes the (informal sketch of the) proof of the existence of a H\"older stationary measure.

Now, for a stationary measure $\msp$ the inequality~\eqref{eq:En-decr} reads as
\[
\En_{\alpha}(\msp) \le \lambda \En_{\alpha}(\msp) + C_0,
\]
thus implying that either the energy $\En_{\alpha}(\msp)\le \frac{C_0}{1-\lambda}$, or that this energy is infinite. Unfortunately, the above arguments do not exclude the latter possibility, though suggest that it should not take place: large energies ``decrease at infinity''. Also, if one takes a Dirac measure as an initial one, 
an averaging with respect to a distribution that has atomic component will always have some atoms, and hence will never be H\"older regular. So in order to prove Theorems~\ref{t:main-1}--\ref{t:main-3}, definitions of energy in (\ref{eq:En-def}) and (\ref{eq:tEn-def}) are to be modified to exclude the infinity from the list of possibilities.


Such a modification is done by replacing the interaction kernel $d(x,y)^{-\alpha}$ in the definition of the energy $\En_{\alpha}$ by a bounded function, choosing a distance~$\eps$ at which the interaction in the energy is cut-off.
To do so, we take (see Definition~\ref{d:En-eps} below)
\[
\En_{\alpha,\eps}(\msp):= \iint_M U_{\alpha,\eps}(d(x,y)) d\msp(x) d\msp(y).
\]
for some bounded interaction potential $U_{\alpha,\eps}(r)$.
Unfortunately, bluntly taking $\left[\max(r,\eps)\right]^{-\alpha}$ in the role of $U_{\alpha,\eps}(r)$ makes it harder to work with the $L_2$-interpretation. 
 Thus we first modify the $L_2$-definition, applying a cut-off in the construction of the function $\rho$, and then use it to define the energy.

\subsection{Structure of the paper}\label{ss.structure}

We start by providing rigorous definitions for the cut-off energies~$\tEn_{\alpha,\eps}$ and~$\En_{\alpha,\eps}$ in Sections~\ref{s:tEn} and~\ref{s:En}  (see Definitions~\ref{d:tEn} and~\ref{d:En-eps} below). We then state some of their properties in Section~\ref{s:En-tEn-properties}. In particular, we show that when the energies are large, they are comparable to each other (see Proposition~\ref{prop:EnEquiv}, 
which is analogous to~\eqref{eq:En-tEn} above).

Next, in Section~\ref{s:W} we introduce measures~$\mTn_{\alpha,\eps}[\msp]$ associated with the measure~$\msp$, and describe the effect of an action of ``not too distorting'' diffeomorphisms (see Proposition~\ref{prop:WassEst}).

%
%
%
%

We provide the formal proof of Theorems~\ref{t:main-1}--\ref{t:main-3} in Section~\ref{s:proofs}. The main intermediate result  is Proposition~\ref{t:contr} on  exponential decrease of large energies; it is an analog of (\ref{eq:En-decr}) and is established in Section~\ref{s:exponential}. The final step in the proof of Theorems~\ref{t:main-1}--\ref{t:main-3} is made in Section \ref{s:Holder}.

To make an exposition more reader friendly, the detailed proofs of the most technical statements are postponed till Section \ref{s.tip}. Namely,
in Section \ref{s:properties} we discuss the properties of the functions $\varphi_{\alpha,\eps}(r)$ and $U_{\alpha,\eps}(r)$, and prove Proposition \ref{p:properties}. In Section \ref{s:P41} we provide the proof of Proposition \ref{prop:EnEquiv} that claims that the energies  $\En_{\alpha, \eps}(\msp)$ and $\tEn_{\alpha, \eps}(\msp)$ cannot differ too much.
The proof of Proposition~\ref{prop:WassEst} is given in Section \ref{s:WE}, Proposition \ref{lm:tailsEst} is proven in Section \ref{ss.proof417}, and in Section \ref{s:W-proof} the proof of Lemma \ref{l.West} is given.


Finally, in Appendix \ref{ss.example} we give an example that shows that the finite moment condition (\ref{PosMomentCond}) cannot be replaced by the condition $\int \log \fL(f) \, d\mgr(f) < \infty$ that would be an analog of the condition on distribution in the classical Furstenberg Theorem on random matrix products.

\section{Tools for the proof}\label{s:tools}

In this section, we introduce the $\eps$-cut-off energies $\En_{\alpha,\eps}$ and $\tEn_{\alpha,\eps}$, as well as the corresponding measures $\mT_{\alpha,\eps}(\msp)$ and $\mTn_{\alpha,\eps}(\msp)$ associated to a given measure~$\msp$. We then study their properties and how these objects are changed under an action of a diffeomorphism.

\subsection{$L_2$-type energy $\tEn_{\alpha,\eps}$}\label{s:tEn}

To implement the strategy described in Section~\ref{s.ideas} in a way applicable to any initial probability measure on the manifold, let us start by modifying the definitions of the energy given by (\ref{eq:En-def}) and (\ref{eq:tEn-def}). As $L_2$-point of view is the essential component of the proof, we start by modifying the function used to realise it.

\begin{figure}[!h!]
\includegraphics[height=5cm]{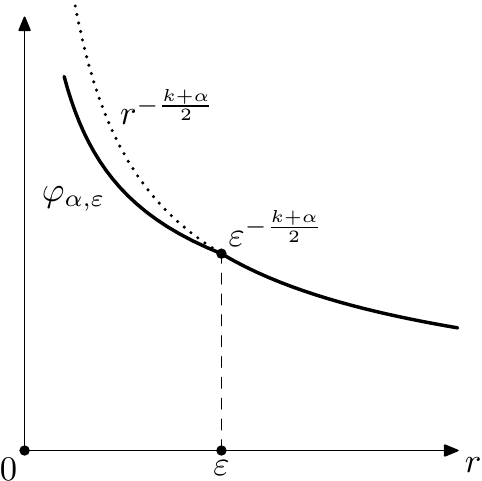}
\caption{Function $\varphi_{\alpha,\eps}(r)$: it coincides with $r^{-\frac{k+\alpha}{2}}$ (graph drawn by dotted line) for $r>\eps$, and scales as a slower-growing power of $r$ for $r<\eps$}\label{f:alpha-eps}
\end{figure}

\begin{definition}\label{d:phi-U}
    We define (see Fig.~\ref{f:alpha-eps})
    \begin{equation}\label{eq:def-varphi}
        \varphi_{\alpha,\eps}(r):=\begin{cases}
            r^{-\frac{k+\alpha}{2}}, & r \ge \eps\\
            \frac{1}{\eps^{\alpha}}\cdot r^{-\frac{k-\alpha}{2}}, & r < \eps.
        \end{cases}
    \end{equation}
\end{definition}

The following definition should be considered  an analog of \eqref{eq:tEn-def} and~\eqref{eq:Theta-m-def} from Section \ref{s.ideas}:

\begin{definition}\label{d:tEn}
    For a measure $\msp \in \mM$, we define a function
    \begin{equation}\label{eq:dens-a-e}
        \dens_{\alpha,\eps}[\msp](y):=\int_M \varphi_{\alpha,\eps}(d(x,y)) d\msp(x).
    \end{equation}
    and a non-probability measure
    \begin{equation*}
        \mT_{\alpha,\eps}[\msp]:=\dens_{\alpha,\eps}^2[\msp](y) \, d\Leb(y).
    \end{equation*}
    We also define
    \begin{equation} \label{eq:tEn}
        \tEn_{\alpha,\eps}(\msp):=\mT_{\alpha,\eps}[\msp](M) = \int_M \dens_{\alpha,\eps}^2[\msp](y) \, d\Leb(y).
    \end{equation}
\end{definition}

In the same way as in Section~\ref{s.ideas}, we rewrite the definition~\eqref{eq:tEn} as a pairwise interaction with some kernel $K_{\alpha,\eps}$:
\begin{lemma}\label{l:triple}
\[
\tEn_{\alpha,\eps}(\msp) = \iint_{M\times M} K_{\alpha,\eps}(x,z) d\msp(x) d\msp(z),
\]
where
\begin{equation}\label{eq:K-a-e}
K_{\alpha,\eps}(x,z) := \int_M \varphi_{\alpha,\eps}(d(x,y)) \varphi_{\alpha,\eps}(d(z,y)) \, d\Leb(y).
\end{equation}
\end{lemma}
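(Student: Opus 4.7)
The proof is essentially an application of Tonelli's theorem, which I would carry out in three steps.

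First I would unfold the square: by the definition~\eqref{eq:dens-a-e},
\[
\dens_{\alpha,\eps}^2[\msp](y) = \left(\int_M \varphi_{\alpha,\eps}(d(x,y))\,d\msp(x)\right)^{\!2}
= \iint_{M\times M} \varphi_{\alpha,\eps}(d(x,y))\,\varphi_{\alpha,\eps}(d(z,y))\,d\msp(x)\,d\msp(z),
\]
using Fubini/Tonelli on the product measure $\msp\otimes\msp$ (valid because $\varphi_{\alpha,\eps}\ge 0$).

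Next I would substitute this identity into~\eqref{eq:tEn} to obtain a triple integral
\[
\tEn_{\alpha,\eps}(\msp) = \int_M \!\iint_{M\times M} \varphi_{\alpha,\eps}(d(x,y))\,\varphi_{\alpha,\eps}(d(z,y))\,d\msp(x)\,d\msp(z)\,d\Leb(y)
\]
and apply Tonelli's theorem once more to swap the order of integration, bringing the Lebesgue integration in $y$ inside. Since the integrand is non-negative and measurable, the swap is unconditional; recognizing the resulting inner integral as the kernel $K_{\alpha,\eps}(x,z)$ from~\eqref{eq:K-a-e} yields the claimed identity.

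The only thing to double-check is that the triple integral is not identically $+\infty$ in a trivial way, i.e.\ that $K_{\alpha,\eps}(x,z)$ is finite (otherwise the identity is vacuously true but uninformative). This reduces to noting that the singularity of $\varphi_{\alpha,\eps}$ at $0$ is of order $\frac{k-\alpha}{2}$ in the cut-off region, so $\varphi_{\alpha,\eps}(d(x,\cdot))^2$ is locally integrable against Lebesgue measure in dimension $k$ (the worst exponent $k-\alpha$ is strictly less than $k$ for $\alpha>0$), and $\varphi_{\alpha,\eps}$ is bounded away from the cut-off scale by the compactness of $M$. I do not see any serious obstacle here; the statement is a bookkeeping lemma that records the kernel form of $\tEn_{\alpha,\eps}$, and the work is just a clean application of Tonelli.
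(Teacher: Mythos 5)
Your proposal is correct and is essentially identical to the paper's own proof: substitute the definition of $\dens_{\alpha,\eps}[\msp]$ into \eqref{eq:tEn} to get the triple integral \eqref{eq:triple} and change the order of integration, which is justified by Tonelli since the integrand is non-negative. Your extra remark on finiteness of $K_{\alpha,\eps}$ is harmless but not needed for the identity (the paper establishes such bounds separately, e.g.\ in Proposition~\ref{p:properties}).
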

\begin{proof}
In the same way as in~\eqref{eq:tEn-kernel}, it suffices to substitute (\ref{d:tEn}) that defines $\dens_{\alpha,\eps}^2[\msp](y)$, into (\ref{eq:tEn}), obtaining a triple integral
\begin{equation}\label{eq:triple}
\tEn_{\alpha,\eps}(\msp) =
\iiint_{M\times M\times M} \varphi_{\alpha,\eps}(d(x,y)) \varphi_{\alpha,\eps}(d(z,y)) \, d\msp(x) d\msp(z) \, d\Leb(y),
\end{equation}
and then change the order of integration.
\end{proof}


\subsection{Interaction-type energy $\En_{\alpha,\eps}$}\label{s:En}

Now let us define  the energy~$\En_{\alpha,\eps}$. As the reader will see, we actually need both of these notions (as well as the relations between them): while $\tEn_{\alpha,\eps}$ is $L_2$-based and thus adapted for the final steps of the proof, it is the energy $\En_{\alpha,\eps}$ that behaves well under an action of a diffeomorphism.

In the same way as in Section~\ref{s.ideas}, we first replace the manifold $M$ with the Euclidean space~$\R^k$ in~\eqref{eq:K-a-e}:
\begin{definition}
    For any two points $x,z\in \R^k$ consider the integral
    \begin{equation}\label{eq:I}
        I_{\alpha,\eps}(x,z):=\int_{\R^k} \varphi_{\alpha,\eps}(|y-x|) \varphi_{\alpha,\eps}(|y-z|) \, d\Leb(y).
    \end{equation}
    Due to the symmetry of the problem, this integral depends only on the distance $r=|x-z|$ between these two points:
    \[
        I_{\alpha,\eps}(x,z)=U_{\alpha,\eps}(|x-z|)
    \]
for some function $U_{\alpha,\eps}$ of $r=|x-z|$. We take this as a definition of~$U_{\alpha,\eps}(r)$. %
\end{definition}


Now we are ready to give a definition of energy $\En_{\alpha,\eps}(\msp)$ that is a modified version of~\eqref{eq:En-def}:
\begin{definition}\label{d:En-eps}
We define
    \[
        \En_{\alpha,\eps}(\msp):=\iint_{M\times M} U_{\alpha,\eps}(d(x,z)) \, d\msp(x) d\msp(z).
    \]
\end{definition}

\begin{figure}[!h!]
\includegraphics[height=5cm]{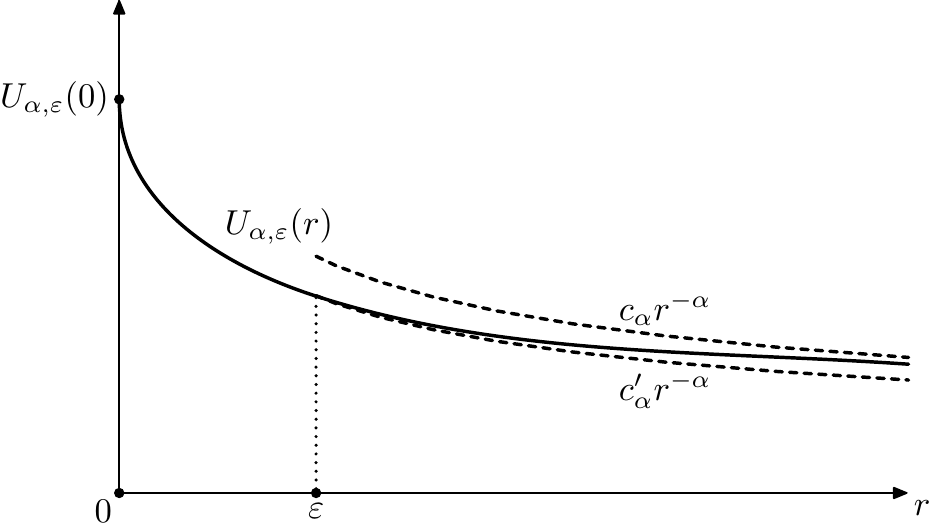}
\caption{The function $U_{\alpha,\eps}(r)$}\label{f:U-a-e}
\end{figure}

Note that the function $U_{\alpha,\eps}(r)$ that we defined can indeed be seen as a cut-off version of $r^{-\alpha}$. Namely, it has the following properties (see~Fig.~\ref{f:U-a-e}). 
\begin{proposition}\label{p:properties}
For any $\alpha\in(0,\frac{1}{2})$ the following hold:
\begin{enumerate}
\item\label{i-1} \textbf{(Finiteness and monotonicity)} For any $\eps>0$ the function $U_{\alpha,\eps}(r)$ is non-increasing and finite everywhere on $[0,+\infty)$;
in particular, for any $r>0$ one has
\begin{equation}\label{eq:upper}
U_{\alpha,\eps}(r)\le U_{\alpha,\eps}(0).
\end{equation}
\item\label{i-3} \textbf{(Outside cut-off: power law)} For some constants $c_{\alpha},c'_{\alpha}$ for any $r\ge \eps>0$ one has
\begin{equation}\label{eq:U-off-eps}
c'_{\alpha} r^{-\alpha} \le U_{\alpha,\eps}(r) \le c_{\alpha} r^{-\alpha};
\end{equation}
also, for any $r>0$
\[
\frac{U_{\alpha,\eps}(r)}{c_{\alpha} r^{-\alpha}} \to 1, \quad \text{ as }  \eps\to 0.
\]
\item\label{i-4} \textbf{($\alpha$-slow changing)} For any $\eps$ and any $r'>r>0$, one has
    \begin{equation}\label{eq:UReg}
\left(\frac{r'}{r}\right)^{-\alpha}\le \frac{U_{\alpha,\eps}(r')}{U_{\alpha,\eps}(r)} \le 1
    \end{equation}
\end{enumerate}
\end{proposition}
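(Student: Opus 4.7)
The strategy is to reinterpret $U_{\alpha,\eps}$ as a self-convolution and then derive all three items from two elementary ingredients: an exact scaling identity, and monotonicity in the cut-off parameter~$\eps$. Setting $\Phi_{\alpha,\eps}(y) := \varphi_{\alpha,\eps}(|y|)$ for $y \in \R^k$, one checks from the piecewise formula~(\ref{eq:def-varphi}) that $\Phi_{\alpha,\eps}$ is radially symmetric, continuous at $|y|=\eps$, strictly positive and radially non-increasing. Since $\Phi_{\alpha,\eps}$ is even, changing variables in~(\ref{eq:I}) yields
\begin{equation*}
    I_{\alpha,\eps}(x,z) = (\Phi_{\alpha,\eps} * \Phi_{\alpha,\eps})(x-z),
\end{equation*}
so $U_{\alpha,\eps}(r)$ is the self-convolution of $\Phi_{\alpha,\eps}$ evaluated at any point of norm~$r$.

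For item~(\ref{i-1}), I would first establish finiteness at $r=0$ by a direct polar-coordinate computation: splitting $\int_{\R^k}\Phi_{\alpha,\eps}^2\, d\Leb$ at $\rho=\eps$ gives a constant multiple of $\eps^{-2\alpha}\int_0^\eps \rho^{\alpha-1}\, d\rho + \int_\eps^\infty \rho^{-\alpha-1}\, d\rho$, both convergent for $\alpha\in(0,\tfrac12)$, and producing a multiple of $\eps^{-\alpha}/\alpha$. Monotonicity in $r$ is a general property of self-convolutions of radial non-increasing functions: using the layer-cake formula $\Phi_{\alpha,\eps} = \int_0^\infty \mathbb{1}_{B_{r(t)}(0)}\, dt$ and switching integration order,
\begin{equation*}
(\Phi_{\alpha,\eps}*\Phi_{\alpha,\eps})(w) = \int_0^\infty\!\!\int_0^\infty |B_{r(s)}(0) \cap B_{r(t)}(w)|\, ds\, dt,
\end{equation*}
and each ball-intersection volume is visibly a non-increasing function of $|w|$. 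Together with the bound at $r=0$, this gives $U_{\alpha,\eps}(r)\le U_{\alpha,\eps}(0)<\infty$ for every $r\ge 0$.

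For items~(\ref{i-3}) and~(\ref{i-4}), the central observation is the scaling identity
\begin{equation*}
U_{\alpha,\eps}(\lambda r) = \lambda^{-\alpha}\, U_{\alpha,\eps/\lambda}(r), \qquad \lambda > 0,
\end{equation*}
which follows from the pointwise equality $\varphi_{\alpha,\eps}(\lambda\rho) = \lambda^{-(k+\alpha)/2}\, \varphi_{\alpha,\eps/\lambda}(\rho)$ (verified case by case in the two regimes of~(\ref{eq:def-varphi})) combined with the substitution $y = \lambda y'$ in~(\ref{eq:I}). Taking $\lambda = r$ gives the normalization $U_{\alpha,\eps}(r) = r^{-\alpha}\, U_{\alpha,\eps/r}(1)$. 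A separate pointwise comparison of~(\ref{eq:def-varphi}) in each regime shows that $\eps' \le \eps$ implies $\varphi_{\alpha,\eps'} \ge \varphi_{\alpha,\eps}$, and hence $\eps \mapsto U_{\alpha,\eps}$ is non-increasing. Item~(\ref{i-3}) then follows by taking $c'_\alpha := U_{\alpha,1}(1)$ and $c_\alpha := U_{\alpha,0}(1)$: for $r \ge \eps$ the parameter $\eps/r$ lies in $[0,1]$, so monotonicity in~$\eps$ sandwiches $U_{\alpha,\eps/r}(1)$ between $c'_\alpha$ and $c_\alpha$. Finiteness of $c_\alpha$ and the precise power $r^{-\alpha}$ come from the Riesz composition formula: $\Phi_{\alpha,0}(y) = |y|^{-(k+\alpha)/2}$ is a Riesz kernel of order $(k-\alpha)/2$, so its self-convolution is a constant multiple of the Riesz kernel of order $k-\alpha$, i.e.\ of $|y|^{-\alpha}$. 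The limit $U_{\alpha,\eps}(r)/(c_\alpha r^{-\alpha}) \to 1$ as $\eps\to 0$ is then immediate: the ratio equals $U_{\alpha,\eps/r}(1)/c_\alpha$, and monotone convergence applied to the (non-decreasing as $\eps\searrow 0$) integrand in~(\ref{eq:I}) gives $U_{\alpha,\eps/r}(1) \to U_{\alpha,0}(1) = c_\alpha$. Finally, for~(\ref{i-4}) the upper bound $U_{\alpha,\eps}(r')/U_{\alpha,\eps}(r) \le 1$ is just monotonicity in~$r$, and for $\lambda := r'/r > 1$ the scaling identity gives $(r')^\alpha U_{\alpha,\eps}(r') = r^\alpha U_{\alpha,\eps/\lambda}(r) \ge r^\alpha U_{\alpha,\eps}(r)$ (using monotonicity in~$\eps$), which rearranges into the lower bound $(r'/r)^{-\alpha}$.

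The main obstacle I expect is isolating the monotonicity statement of~(\ref{i-1}) in a sufficiently self-contained way. The layer-cake approach outlined above avoids invoking the full Riesz rearrangement inequality and reduces matters to the elementary geometric fact that the volume of the intersection of two Euclidean balls is non-increasing in the distance between their centers. Once that is in place, all remaining assertions follow from the scaling identity together with the pointwise monotonicity $\varphi_{\alpha,\eps'} \ge \varphi_{\alpha,\eps}$ for $\eps' \le \eps$.
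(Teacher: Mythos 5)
Your proposal is correct and follows essentially the same route as the paper: the self-convolution interpretation, the scaling identity $U_{\alpha,\lambda\eps}(\lambda r)=\lambda^{-\alpha}U_{\alpha,\eps}(r)$, monotonicity of $\varphi_{\alpha,\eps}$ in $\eps$, the layer-cake reduction of $r$-monotonicity to the non-increase of ball-intersection volumes, and monotone convergence for the $\eps\to 0$ limit all appear in the paper's argument. The only cosmetic deviations are that the paper obtains $U_{\alpha,\eps}(r)\le U_{\alpha,\eps}(0)$ via Cauchy--Schwarz rather than via monotonicity, and derives the lower bound in \eqref{eq:UReg} from the pointwise regularity of $\varphi_{\alpha,\eps}$ under the substitution $y=\lambda y'$ rather than from your (equally valid, arguably cleaner) combination of the scaling identity with $\eps$-monotonicity.
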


We postpone the proof of these properties until Section~\ref{s:properties}. Specifically,
conclusion~\ref{i-1} will be proven in Lemma~\ref{l:U-finite} and Lemma~\ref{lm:fMonotonicity}, conclusion~\ref{i-3} in Corollary~\ref{fHoldCor} and Lemma~\ref{l:lim-U}, and conclusion~\ref{i-4} in~Proposition~\ref{prop:phifReg}.

Meanwhile, we will be using Proposition~\ref{p:properties} to prove our main results. We will assume that for any given $\alpha\in(0,\frac{1}{2})$ the constants $c_\alpha, c'_\alpha$    
are as in the conclusions of this proposition.

Note that the value of $U_{\alpha,\eps}(0)$, appearing in conclusion~(\ref{i-1}) 
of Proposition~\ref{p:properties}, can be calculated explicitly:
\begin{lemma}\label{l:finite}
$U_{\alpha, \eps} (0) = \frac{L}{\alpha}\, \eps^{-\alpha}$, where $L=2\, \Leb B_1(0)$ is a constant.
 \end{lemma}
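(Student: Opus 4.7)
The plan is a direct calculation from definition~(\ref{eq:I}). Since $U_{\alpha,\eps}(r)$ depends only on $r=|x-z|$, I may take $x=z=0$ and obtain
\[
U_{\alpha,\eps}(0)=\int_{\R^k}\varphi_{\alpha,\eps}(|y|)^2\,d\Leb(y).
\]
The integrand is radial, so I switch to polar coordinates and reduce the problem to a one-dimensional integral:
\[
U_{\alpha,\eps}(0)=\omega_{k-1}\int_0^{\infty}\varphi_{\alpha,\eps}(\rho)^2\,\rho^{k-1}\,d\rho,
\]
where $\omega_{k-1}$ denotes the $(k-1)$-volume of the unit sphere in $\R^k$.

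Next I split the radial integral at $\rho=\eps$ to accommodate the two branches of~(\ref{eq:def-varphi}). On $[0,\eps]$ the integrand collapses (after canceling $\rho^{k-1}$ against $\rho^{-(k-\alpha)}$) to $\eps^{-2\alpha}\rho^{\alpha-1}$; on $[\eps,\infty)$ it collapses to $\rho^{-\alpha-1}$. Both are elementary power integrals, both converge by virtue of the standing assumption $\alpha>0$, and each evaluates to $\eps^{-\alpha}/\alpha$.

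Summing the two equal contributions and multiplying by $\omega_{k-1}$ gives the claim
\[
U_{\alpha,\eps}(0)=\frac{2\omega_{k-1}}{\alpha}\,\eps^{-\alpha},
\]
which is precisely the stated formula with the constant $L=2\omega_{k-1}$ (a universal quantity depending only on the dimension $k$, which up to the usual $\omega_{k-1}=k\,\Leb B_1(0)$ convention is the factor appearing in the lemma). There is no real obstacle; the only point worth flagging is that convergence of $\int_0^\eps\rho^{\alpha-1}\,d\rho$ at $0$ and of $\int_\eps^\infty\rho^{-\alpha-1}\,d\rho$ at $\infty$ both rely on $\alpha>0$, which is precisely why the whole setup is restricted to positive $\alpha$.
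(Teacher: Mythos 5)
Your computation is correct and is essentially the paper's own proof: both reduce $U_{\alpha,\eps}(0)$ to $\int_{\R^k}\varphi_{\alpha,\eps}(|y|)^2\,d\Leb(y)$, split at $|y|=\eps$, and evaluate two elementary power integrals, each contributing $\eps^{-\alpha}/\alpha$ times the angular factor. In fact your constant $2\omega_{k-1}=2k\,\Leb B_1(0)$ is the correct one; the paper's displayed value $L=2\,\Leb B_1(0)$ drops the factor $k$ coming from passing to polar coordinates, which is harmless since only the existence of some constant $L$ is ever used.
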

\begin{proof}
\begin{multline*}
U_{\alpha, \eps} (0) = \int_{\R^k} \varphi_{\alpha,\eps}(|x|)^2 \, d\Leb(x) = \\
        = \int_{|x| < \eps} \left( \frac{|x|^{\frac{\alpha}{2} - \frac{k}{2}}}{\eps^{\alpha}} \right)^2  \, d\Leb(x) + \int_{|x| > \eps} \left( |x|^{-\frac{\alpha}{2} - \frac{k}{2}} \right)^2  \, d\Leb(x) \le \\
        = \Leb(B_1(0)) \left( \frac{r^{\alpha}}{\alpha \eps^{2 \alpha}}\bigg|_{0}^{\eps} - \frac{r^{-\alpha}}{\alpha}\bigg|_{\eps}^{\infty} \right) = \frac{L}{\alpha}\,  \eps^{-\alpha}.
 \end{multline*}
\end{proof}

Due to the monotonicity conclusion~(\ref{i-1}) of Proposition~\ref{p:properties} one gets the following uniform upper bound:
\begin{corollary}\label{c:finite}
For any $\alpha\in(0,\frac{1}{2})$ there exists $C_{\alpha}''$ such that for any measure $\msp$ on $M$ and any $\eps>0$ one has
\[
\En_{\alpha,\eps}(\msp) \le C_{\alpha}'' \eps^{-\alpha}.
\]
\end{corollary}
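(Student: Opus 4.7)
The plan is very short: the statement is designed to chain two facts that have just been proved, namely the pointwise bound $U_{\alpha,\eps}(r) \le U_{\alpha,\eps}(0)$ from conclusion~(\ref{i-1}) of Proposition~\ref{p:properties}, and the explicit value $U_{\alpha,\eps}(0) = \frac{L}{\alpha}\eps^{-\alpha}$ from Lemma~\ref{l:finite}. So the strategy is to push both into the double integral defining $\En_{\alpha,\eps}(\msp)$.

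First I would apply the monotonicity part of Proposition~\ref{p:properties}\,(\ref{i-1}) to the integrand $U_{\alpha,\eps}(d(x,z))$ appearing in Definition~\ref{d:En-eps}: since $U_{\alpha,\eps}$ is non-increasing and $d(x,z)\ge 0$, the integrand is bounded pointwise by $U_{\alpha,\eps}(0)$, which is finite by the same conclusion.

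Second, integrating this pointwise bound against $d\msp(x)\,d\msp(z)$ and using $\msp(M)=1$ (the measures of interest are probability measures on $M$) gives
\[
\En_{\alpha,\eps}(\msp) \le U_{\alpha,\eps}(0)\cdot\msp(M)^2 = U_{\alpha,\eps}(0).
\]
Finally, substituting the explicit formula from Lemma~\ref{l:finite} yields
\[
\En_{\alpha,\eps}(\msp) \le \frac{L}{\alpha}\,\eps^{-\alpha},
\]
so one can take $C_\alpha'' := L/\alpha$, where $L = 2\,\Leb B_1(0)$ depends only on the dimension.

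There is no real obstacle here; the only thing to be careful about is that $\msp$ is implicitly a probability measure (otherwise the bound would need an extra factor $\msp(M)^2$). Everything else is a direct consequence of results already in place.
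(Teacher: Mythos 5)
Your proof is correct and is essentially identical to the paper's: bound the integrand by $U_{\alpha,\eps}(0)$ using monotonicity, integrate against the probability measure, and substitute the explicit value $U_{\alpha,\eps}(0)=\frac{L}{\alpha}\eps^{-\alpha}$ from Lemma~\ref{l:finite} to get $C_{\alpha}''=L/\alpha$. Your remark that $\msp$ must be a probability measure is a fair observation, consistent with the paper's usage.
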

\begin{proof}
	The function $U_{\alpha, \eps} (r)$ is monotonously decreasing, so
    \begin{equation*}
        \En_{\alpha,\eps}(\msp) = \iint_{M\times M} U_{\alpha,\eps}(d(x,z)) \, d\msp(x) d\msp(z) \le U_{\alpha, \eps} (0) = C_{\alpha}'' \eps^{-\alpha},
    \end{equation*}
    where $C_{\alpha}''=\frac{L}{\alpha}$.
\end{proof}



Proposition~\ref{p:properties} also allows to have H\"older type bounds in terms of the energy, the statement that is an analogue to~(\ref{eq:Markov}) above:
\begin{lemma}\label{l:holdFromEn}
	For any $\alpha, \eps > 0$, any probability measure $\msp$ on $M$ 
one has
	\begin{equation} \label{holdEstFromEn}
	\forall y\in M \quad \forall r>\eps \quad
		\msp(B_{r}(y)) \le  \sqrt{\frac{\En_{\alpha, \eps} (\msp) 2^{\alpha}}{c_{\alpha}'}} \cdot r^{\frac{\alpha}{2}},
	\end{equation}
    where $c_{\alpha}'$ is given by \eqref{eq:U-off-eps} from Proposition~\ref{p:properties}. 
\end{lemma}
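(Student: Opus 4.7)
\medskip

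\noindent\textbf{Proof proposal.} The plan is to mimic the Markov-type estimate~\eqref{eq:Markov} from the informal overview, but with the bounded kernel $U_{\alpha,\eps}$ in place of $d(x,y)^{-\alpha}$. The point is that, restricted to a ball $B_r(y)$, every pair of points lies within distance $2r$, and since we assume $r>\eps$, this falls into the regime where Proposition~\ref{p:properties}\ref{i-3} applies.

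First, I would restrict the double integral defining $\En_{\alpha,\eps}(\msp)$ to $B_r(y)\times B_r(y)$; by non-negativity of $U_{\alpha,\eps}$ (conclusion~\ref{i-1} of Proposition~\ref{p:properties}), this can only decrease the integral. On this domain, $d(x,z)\le 2r$, so monotonicity (again conclusion~\ref{i-1}) gives $U_{\alpha,\eps}(d(x,z))\ge U_{\alpha,\eps}(2r)$. Since $2r>\eps$, the lower power-law bound from~\eqref{eq:U-off-eps} yields $U_{\alpha,\eps}(2r)\ge c'_{\alpha}(2r)^{-\alpha}$. Chaining these inequalities,
\[
\En_{\alpha,\eps}(\msp)\;\ge\;\iint_{B_r(y)\times B_r(y)} U_{\alpha,\eps}(d(x,z))\,d\msp(x)d\msp(z)\;\ge\;c'_{\alpha}(2r)^{-\alpha}\,\msp(B_r(y))^2.
\]

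Solving this inequality for $\msp(B_r(y))$ produces
\[
\msp(B_r(y))^2\;\le\;\frac{\En_{\alpha,\eps}(\msp)\,2^{\alpha}}{c'_{\alpha}}\,r^{\alpha},
\]
and taking square roots gives exactly~\eqref{holdEstFromEn}. There is no genuine obstacle here; the only thing one must be careful about is that the lower estimate in~\eqref{eq:U-off-eps} is invoked at distance $2r$ (not $r$), which is why the assumption $r>\eps$ is used rather than a weaker one, and which accounts for the factor $2^{\alpha}$ appearing inside the square root.
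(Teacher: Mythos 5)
Your argument is correct and is essentially identical to the paper's own proof: both restrict the energy integral to $B_r(y)\times B_r(y)$, apply the monotonicity of $U_{\alpha,\eps}$ to get the Markov-type lower bound $U_{\alpha,\eps}(2r)\,\msp(B_r(y))^2$, and then invoke the power-law lower bound~\eqref{eq:U-off-eps} at the distance $2r>\eps$. Nothing further is needed.
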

\begin{proof}
	One has
	\[
		\En_{\alpha,\eps}(\msp) \ge U_{\alpha,\eps}(2r) \cdot \msp(B_{r}(y))^2 \ge c_{\alpha}' (2r)^{-\alpha}\cdot \msp(B_{r}(y))^2,
	\]
	where the first inequality is the Markov lower bound for the integral (for any $x,z\in B_r(y)$ one has $U_{\alpha, \eps}(d(x,z))\ge U_{\alpha, \eps}(2r)$), and the second is due to~\eqref{eq:U-off-eps}. Dividing by $c_{\alpha}'(2r)^{-\alpha}$ and taking square root, we obtain the desired~(\ref{holdEstFromEn}).
\end{proof}

%
%
%


\subsection{Properties of the energies $\En_{\alpha,\eps}$ and $\tEn_{\alpha,\eps}$}\label{s:En-tEn-properties}

In the same way as for~\eqref{eq:En-tEn}, the two energies, $\En_{\alpha,\eps}$ and $\tEn_{\alpha,\eps}$, are comparable. Namely, we have the following statement.

\begin{proposition}\label{prop:EnEquiv}
    For any $\alpha \in (0, 1/2)$ and any $\delta > 0$ there exists $C>0$ such that for any $\eps>0$ and any measure~$\msp$ on~$M$ such that $\tEn_{\alpha, \eps}(\msp)>C$ or $\En_{\alpha, \eps}(\msp)>C$ one has
    \[
        \frac{\En_{\alpha, \eps}(\msp)}{\tEn_{\alpha, \eps}(\msp)} \in (1-\delta,1+\delta).
    \]
\end{proposition}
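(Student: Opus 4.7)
The plan is to compare the kernels defining the two energies pointwise: write $\tEn_{\alpha,\eps}(\msp)=\iint K_{\alpha,\eps}(x,z)\,d\msp(x)d\msp(z)$ (Lemma~\ref{l:triple}) and $\En_{\alpha,\eps}(\msp)=\iint U_{\alpha,\eps}(d(x,z))\,d\msp(x)d\msp(z)$ (Definition~\ref{d:En-eps}), then show that on a small neighborhood of the diagonal these kernels agree up to a multiplicative factor $1+o(1)$, while off the diagonal both are uniformly bounded. The large-energy hypothesis will then force the mass of $\msp\otimes\msp$ near the diagonal to dominate, so the overall ratio is close to $1$.

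Concretely, I would fix $r_0>0$ (small, to be chosen depending on $\delta$) and split $M\times M$ into close pairs $\{d(x,z)<r_0\}$ and far pairs $\{d(x,z)\ge r_0\}$. On the far region, Proposition~\ref{p:properties}(ii) gives $U_{\alpha,\eps}(d(x,z))\le c_\alpha r_0^{-\alpha}$, and a direct split of the $y$-integral in \eqref{eq:K-a-e} into $B_{r/2}(x)$, $B_{r/2}(z)$ and their complement (with $r=d(x,z)$), combined with the explicit form of $\varphi_{\alpha,\eps}$, yields $K_{\alpha,\eps}(x,z)\le C(\alpha,M)r_0^{-\alpha}$ as well. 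Hence the far contributions to both energies are bounded by a constant $C_1(r_0,\alpha,M)$ independent of $\eps$ and $\msp$.

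On the close region, I would pass to normal coordinates at the midpoint of $x,z$, identifying $B_{r_0}$ in $M$ with a Euclidean ball up to a metric distortion of order $r_0^2$, both in pairwise distances and in the volume form. Using that $\varphi_{\alpha,\eps}((1+\theta)r)=(1+O(\theta))\varphi_{\alpha,\eps}(r)$ uniformly in $r$ and $\eps$ (a case analysis from Definition~\ref{d:phi-U}, noting continuity at the kink $r=\eps$), together with the slow-change property of $U$ from Proposition~\ref{p:properties}(iii), I would compare the local parts of $K_{\alpha,\eps}(x,z)$ and $U_{\alpha,\eps}(d(x,z))$ and obtain, for $d(x,z)<r_0$,
$$
K_{\alpha,\eps}(x,z)=\bigl(1+E_1(r_0)\bigr)\,U_{\alpha,\eps}(d(x,z))+O(r_0^{-\alpha}),
$$
with $E_1(r_0)\to 0$ as $r_0\to 0$, uniformly in $x,z,\eps$. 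The $O(r_0^{-\alpha})$ accounts for both the Euclidean tail of $U$ outside $B_{r_0}$ and the manifold tail of $K$, which integrate to the same order by the same scaling estimate used on the far region.

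Integrating this identity against $\msp\otimes\msp$ on the close region and absorbing the far-region contributions into the additive error gives $|\tEn_{\alpha,\eps}(\msp)-\En_{\alpha,\eps}(\msp)|\le|E_1(r_0)|\,\En_{\alpha,\eps}(\msp)+C_2(r_0,\alpha,M)$. Given $\delta>0$, choose $r_0$ so that $|E_1(r_0)|<\delta/2$ and set $C:=2C_2/\delta$; whenever $\En_{\alpha,\eps}(\msp)>C$, this yields $|\tEn/\En-1|<\delta$, and the symmetric case $\tEn_{\alpha,\eps}(\msp)>C$ follows because the same bound shows large $\tEn$ forces large $\En$. The main obstacle I anticipate is the uniform-in-$\eps$ local comparison: one must verify the multiplicative error $\varphi_{\alpha,\eps}((1+\theta)r)/\varphi_{\alpha,\eps}(r)=1+O(\theta)$ uniformly across the kink at $r=\eps$ and for $\eps$ arbitrarily small, and analogously control $U_{\alpha,\eps}$ under the curvature distortion—essentially a careful Hölder-type estimate that does not degenerate as $\eps\to 0$.
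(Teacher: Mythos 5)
Your proposal is correct and follows essentially the same route as the paper: the paper's proof also reduces to a uniform $(\delta,C)$-comparison of the kernels $K_{\alpha,\eps}(x,z)$ and $U_{\alpha,\eps}(d(x,z))$ (Lemma~\ref{l:K-U-comp}), obtained by cutting off the $y$-integral at a small radius $r_0$, bounding the tail uniformly, and comparing the local part to its Euclidean counterpart in geodesic coordinates using the slow-change property of $\varphi_{\alpha,\eps}$ from Proposition~\ref{prop:phifReg}, after which the additive constant is absorbed exactly as you do via the large-energy hypothesis. The uniform-in-$\eps$ estimate you flag as the main obstacle is precisely the content of Proposition~\ref{prop:phifReg} and causes no difficulty across the kink at $r=\eps$.
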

The idea of the proof is the same as for~\eqref{eq:En-tEn}: large values of energy can come only from points that are close to each other, and locally any manifold looks like a Euclidean space. Again, we postpone the formal proof of this proposition until Section~\ref{s:P41}.  

It is convenient to re-formulate Proposition~\ref{prop:EnEquiv} using the following notation:
\begin{definition}
We say that two positive numbers, $A$ and $A'$, are \emph{$(\delta,C)$-close}, if
\[
A< (1+\delta)A'+C \quad \text{ and} \quad A'< (1+\delta)A+C;
\]
this can be equivalently rewritten as
\[
\frac{1}{1+\delta} A - \frac{1}{1+\delta}C < A'< (1+\delta)A+C.
\]
We denote it $A \approx_{(\delta,C)} A'$.
\end{definition}

\begin{remark}\label{r:p-4.1}
Proposition~\ref{prop:EnEquiv} can be equivalently reformulated in the following way, that includes all possible probability measures~$\msp$, not necessarily the high energy ones. For any $\alpha \in (0, 1/2)$ and any $\delta > 0$ there exists $C>0$ such that
    \begin{equation}
    	\forall \eps>0 \quad \forall \msp \quad \En_{\alpha,\eps}(\msp)\approx_{(\delta,C)}\tEn_{\alpha,\eps}(\msp).
    \end{equation}
\end{remark}

%
%
%

The next proposition and its corollary give a bound on how the energy $\En_{\alpha,\eps}$ can  change under an action of a diffeomorphism. Namely, it is changed by a factor that is at most $\fL(f)^{\alpha}$, thus once $\alpha$ is close to 0, the factor is close to~$1$.

\begin{proposition} \label{prop:enChangeOneStep}
    For any $\alpha$, any $\eps>0$ and any $f \in \Diff^1(M)$ one has
    \[
        \fL(f)^{-\alpha}\le \frac{\En_{\alpha, \eps}(f_* \msp)}{\En_{\alpha, \eps} (\msp)} \le \fL(f)^{\alpha}.
    \]
\end{proposition}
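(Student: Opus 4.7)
The strategy is to rewrite both energies as integrals over the same measure $\msp \otimes \msp$ and then compare the integrands pointwise.

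First, I would apply the change of variables formula for pushforward measures to get
\[
\En_{\alpha,\eps}(f_* \msp) = \iint_{M\times M} U_{\alpha,\eps}\bigl(d(f(x),f(z))\bigr) \, d\msp(x)\, d\msp(z).
\]
Thus both $\En_{\alpha,\eps}(f_*\msp)$ and $\En_{\alpha,\eps}(\msp)$ are integrals over the product measure $\msp\otimes\msp$, with integrands $U_{\alpha,\eps}(d(f(x),f(z)))$ and $U_{\alpha,\eps}(d(x,z))$ respectively. It therefore suffices to prove the pointwise bound
\[
\fL(f)^{-\alpha}\, U_{\alpha,\eps}(d(x,z)) \le U_{\alpha,\eps}(d(f(x),f(z))) \le \fL(f)^{\alpha}\, U_{\alpha,\eps}(d(x,z))
\]
for every pair $(x,z)\in M\times M$, and then integrate.

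The pointwise bound follows from combining two inputs: the bi-Lipschitz estimate
\[
\fL(f)^{-1}\, d(x,z) \le d(f(x),f(z)) \le \fL(f)\, d(x,z),
\]
which is immediate from Definition~\ref{d.Lip}, together with the $\alpha$-slow changing property (Proposition~\ref{p:properties}(\ref{i-4})): for any $r' > r > 0$,
\[
\left(\frac{r'}{r}\right)^{-\alpha}\le \frac{U_{\alpha,\eps}(r')}{U_{\alpha,\eps}(r)} \le 1.
\]
Setting $r_1 = d(x,z)$ and $r_2 = d(f(x),f(z))$, I would split into two cases depending on whether $f$ locally expands ($r_2 \ge r_1$) or contracts ($r_2 < r_1$) between these particular points. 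In the first case, the slow-changing property gives $U_{\alpha,\eps}(r_2)/U_{\alpha,\eps}(r_1) \in [(r_2/r_1)^{-\alpha}, 1] \subset [\fL(f)^{-\alpha},1]$; in the second case it gives $U_{\alpha,\eps}(r_2)/U_{\alpha,\eps}(r_1) \in [1,(r_1/r_2)^{\alpha}] \subset [1,\fL(f)^{\alpha}]$. Both cases are contained in the interval $[\fL(f)^{-\alpha},\fL(f)^{\alpha}]$.

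There isn't a serious obstacle here; the only subtlety is that one cannot argue using monotonicity of $U_{\alpha,\eps}$ alone (that would only give the trivial bound that one side is at most $1$), so the $\alpha$-slow change estimate is essential to obtain the quantitative factor $\fL(f)^{\pm\alpha}$ on both sides. After integrating the pointwise bound against $d\msp(x)d\msp(z)$, dividing by $\En_{\alpha,\eps}(\msp)$ yields the stated inequality.
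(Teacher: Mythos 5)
Your proposal is correct and follows essentially the same route as the paper: reduce to a pointwise comparison of $U_{\alpha,\eps}(d(f(x),f(z)))$ with $U_{\alpha,\eps}(d(x,z))$ via the bi-Lipschitz bound and the $\alpha$-slow-changing property \eqref{eq:UReg}, then integrate against $d\msp(x)\,d\msp(z)$. The paper avoids your explicit case split by combining monotonicity of $U_{\alpha,\eps}$ with the slow-change estimate at $r'=\fL(f)\,d(x,z)$, but this is only a cosmetic difference.
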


\begin{proof}
    Indeed, by definition of $\fL(f)$ we know that
    \begin{equation*}
   d(f(x), f(z)) \le \fL(f) d(x, z) \quad \text{for any $x, z \in M$.}
    \end{equation*}
    Proposition~\ref{p:properties} thus implies (see~\eqref{eq:UReg}) that
    \begin{equation*}
    \fL(f)^{-\alpha} U_{\alpha, \eps} (d(x, z))\le     U_{\alpha, \eps} (d(f(x), f(z)))
    \end{equation*}
    Integrating over $d\msp(x) \, d\msp(z)$, one gets the desired
    \[
    \fL(f)^{-\alpha} \En_{\alpha, \eps} (\msp) \le     \En (f_* \msp) .
    \]
    The second inequality follows in the same way from
    \begin{equation*}
   \fL(f)^{-1} d(x, z) \le
   d(f(x), f(z)) 
    \end{equation*}
 \end{proof}

\begin{corollary} \label{cor:EnChangeOneStep}
    For any $\delta>0$ and any $R < \infty$  there exists $\alpha_0>0$ such that for any $\alpha\in(0, \alpha_0)$, any $\eps>0$ and any $f \in \Diff^1(M)$ with $\fL(f) < R$ one has
    \[
        \frac{\En_{\alpha, \eps} (f_*\msp)}{\En_{\alpha, \eps} (\msp)} \in (1-\delta, 1+\delta).
    \]
\end{corollary}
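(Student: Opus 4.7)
The proof is an elementary consequence of Proposition~\ref{prop:enChangeOneStep} together with the continuity of the map $\alpha \mapsto R^{\alpha}$ at $\alpha=0$. The plan is to choose $\alpha_0$ depending only on $R$ and $\delta$ so that for every $\alpha\in (0,\alpha_0)$ one has both $R^{\alpha} < 1+\delta$ and $R^{-\alpha} > 1-\delta$, and then simply apply the two-sided bound from the proposition.

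\textbf{Step 1 (choice of $\alpha_0$).} Note that we may assume $R \ge 1$, since $\fL(f) = \max(\Lip(f),\Lip(f^{-1})) \ge 1$ always (the composition $f^{-1}\circ f = \mathrm{id}$ has Lipschitz constant $1$, so $\Lip(f)\cdot\Lip(f^{-1}) \ge 1$). The functions $\alpha \mapsto R^{\alpha}$ and $\alpha \mapsto R^{-\alpha}$ are continuous at $0$ and take the value $1$ there. Hence there exists $\alpha_0>0$ (depending on $R$ and $\delta$ only) such that for every $\alpha \in (0,\alpha_0)$,
\[
    1-\delta < R^{-\alpha} \le 1 \le R^{\alpha} < 1+\delta.
\]
Explicitly one can take $\alpha_0 = \min\bigl(\log(1+\delta),\, -\log(1-\delta)\bigr)/\log(R+1)$, but the precise value is irrelevant.

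\textbf{Step 2 (apply Proposition~\ref{prop:enChangeOneStep}).} For any $f \in \Diff^1(M)$ with $\fL(f) < R$ and any $\alpha \in (0,\alpha_0)$, the monotonicity of $t \mapsto t^{\alpha}$ together with $\fL(f)\in [1, R)$ gives $\fL(f)^{\alpha} < R^{\alpha} < 1+\delta$ and $\fL(f)^{-\alpha} > R^{-\alpha} > 1-\delta$. Substituting these into the two-sided inequality
\[
    \fL(f)^{-\alpha} \le \frac{\En_{\alpha,\eps}(f_*\msp)}{\En_{\alpha,\eps}(\msp)} \le \fL(f)^{\alpha}
\]
from Proposition~\ref{prop:enChangeOneStep} (which holds for \emph{every} $\eps>0$ and every measure $\msp$) yields the claimed containment in $(1-\delta, 1+\delta)$.

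\textbf{Obstacles.} There are essentially none; the corollary is a routine quantification of Proposition~\ref{prop:enChangeOneStep}. The only conceptual point worth highlighting is that the bound $\alpha_0$ must depend on $R$ (since $R^{\alpha_0}$ has to be close to $1$), but is independent of $\eps$, of the measure $\msp$, and of the particular diffeomorphism $f$ within the class $\{\fL(f)<R\}$. This uniformity is exactly what is needed when the corollary is later applied inside averages over $\mgr$ restricted to sets of bounded $\fL$.
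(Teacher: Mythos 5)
Your proof is correct and is exactly the (implicit) argument the paper intends: the paper states the corollary without proof as an immediate consequence of Proposition~\ref{prop:enChangeOneStep}, and your Step 1/Step 2 merely spell out the trivial quantification. The only point worth retaining is your observation that $\fL(f)\ge 1$ always (so one may assume $R\ge 1$), which makes the monotonicity step clean.
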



Joining this statement with Proposition~\ref{prop:EnEquiv}, we get the same statement for the energy $\tEn_{\alpha,\eps}$:

\begin{corollary} \label{cor:tEnChangeOneStep}
    For any $\delta>0$ and any $R < \infty$  there exists $\alpha_0>0$ such that for any $\alpha\in(0, \alpha_0)$,
there exists $C>0$ such that for any $f \in \Diff^1(M)$ with $\fL(f) < R$, any $\eps>0$ and any $\msp$ such that $\tEn_{\alpha, \eps}(\msp)>C$ or $\En_{\alpha, \eps}(\msp)>C$ one has
    \begin{equation}\label{eq:tEnChangeOneStep}
        \frac{\tEn_{\alpha, \eps} (f_*\msp)}{\tEn_{\alpha, \eps} (\msp)} \in (1-\delta, 1+\delta).
    \end{equation}
\end{corollary}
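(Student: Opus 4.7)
The plan is to combine Proposition~\ref{prop:EnEquiv} (which compares $\En$ and $\tEn$ once they are large) with Corollary~\ref{cor:EnChangeOneStep} (which controls the change of $\En$ under a bi-Lipschitz map with bounded distortion) by means of the telescoping identity
\[
\frac{\tEn_{\alpha,\eps}(f_*\msp)}{\tEn_{\alpha,\eps}(\msp)}
\;=\;
\frac{\tEn_{\alpha,\eps}(f_*\msp)}{\En_{\alpha,\eps}(f_*\msp)}
\cdot
\frac{\En_{\alpha,\eps}(f_*\msp)}{\En_{\alpha,\eps}(\msp)}
\cdot
\frac{\En_{\alpha,\eps}(\msp)}{\tEn_{\alpha,\eps}(\msp)}.
\]
I will choose parameters so that each of the three factors on the right lies in a slightly smaller window than $(1-\delta,1+\delta)$, and then multiply.

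Concretely, given $\delta>0$ and $R<\infty$, I would first pick an auxiliary $\delta_1\in(0,\delta)$ small enough that $(1+\delta_1)^3<1+\delta$ and $(1-\delta_1)^3>1-\delta$. Then I apply Corollary~\ref{cor:EnChangeOneStep} with parameter $\delta_1$ and the given $R$ to get $\alpha_0=\alpha_0(\delta_1,R)$, so that for any $\alpha\in(0,\alpha_0)$ and any $f$ with $\fL(f)<R$ the middle factor lies in $(1-\delta_1,1+\delta_1)$. Fixing such $\alpha$, I apply Proposition~\ref{prop:EnEquiv} with parameter $\delta_1$ to obtain a threshold $C_1=C_1(\alpha,\delta_1)$: whenever $\En_{\alpha,\eps}(\cdot)>C_1$ or $\tEn_{\alpha,\eps}(\cdot)>C_1$, the ratio $\En_{\alpha,\eps}/\tEn_{\alpha,\eps}$ lies in $(1-\delta_1,1+\delta_1)$.

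The main — and really only non-routine — step is the bookkeeping that ensures Proposition~\ref{prop:EnEquiv} is applicable at \emph{both} endpoints $\msp$ and $f_*\msp$, even though the hypothesis only supplies a lower bound on $\tEn_{\alpha,\eps}(\msp)$ \emph{or} on $\En_{\alpha,\eps}(\msp)$. To absorb the two possible multiplicative losses, I would take
\[
C \;:=\; C_1/(1-\delta_1)^2.
\]
Then if $\tEn_{\alpha,\eps}(\msp)>C$, Proposition~\ref{prop:EnEquiv} (applicable since $C>C_1$) gives $\En_{\alpha,\eps}(\msp)>(1-\delta_1)C=C_1/(1-\delta_1)>C_1$; Corollary~\ref{cor:EnChangeOneStep} in turn yields $\En_{\alpha,\eps}(f_*\msp)>(1-\delta_1)\En_{\alpha,\eps}(\msp)>(1-\delta_1)^2 C=C_1$. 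The alternative case $\En_{\alpha,\eps}(\msp)>C$ is handled identically (with one fewer loss). In both cases, both $\En$-energies exceed $C_1$, so Proposition~\ref{prop:EnEquiv} controls the first and the third factors.

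Multiplying the three bounds, the product lies in $((1-\delta_1)^3,(1+\delta_1)^3)\subset(1-\delta,1+\delta)$, which is exactly \eqref{eq:tEnChangeOneStep}. The quantifier structure of the statement (with $\alpha_0=\alpha_0(\delta,R)$ chosen first, and $C=C(\alpha)$ chosen afterwards) matches the one of the ingredients, since $\alpha_0$ comes from Corollary~\ref{cor:EnChangeOneStep} while $C_1$, and hence $C$, is allowed to depend on the already-fixed $\alpha$.
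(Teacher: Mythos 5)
Your proposal is correct and follows exactly the route the paper intends (the paper gives no written proof beyond ``joining'' Corollary~\ref{cor:EnChangeOneStep} with Proposition~\ref{prop:EnEquiv}); the telescoping into three factors and the threshold bookkeeping $C = C_1/(1-\delta_1)^2$ ensuring Proposition~\ref{prop:EnEquiv} applies at both $\msp$ and $f_*\msp$ are the right way to fill in the details. The only cosmetic point is that the first factor $\tEn_{\alpha,\eps}(f_*\msp)/\En_{\alpha,\eps}(f_*\msp)$ lands in the reciprocal window $\bigl(\tfrac{1}{1+\delta_1},\tfrac{1}{1-\delta_1}\bigr)$ rather than $(1-\delta_1,1+\delta_1)$, so the condition on $\delta_1$ should read, e.g., $(1+\delta_1)^2/(1-\delta_1)<1+\delta$ and $(1-\delta_1)^2/(1+\delta_1)>1-\delta$ — a harmless adjustment.
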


A final concluding remark in this direction is that the statement of Corollary~\ref{cor:tEnChangeOneStep} survives if one takes an expectation of energy of a random image, provided that the moment condition~\eqref{PosMomentCond} is satisfied.

\begin{proposition} \label{lm:tailsEst}
    If a probability measure $\mgr\in\mM$ satisfies assumption~\eqref{PosMomentCond} then for any $\delta > 0$ there exists $\alpha_0 > 0$ such that for any $\alpha \in (0,\alpha_0)$ the following holds. There exists $C > 0$ such that for any $\eps > 0$ and any measure $\msp$ on the manifold $M$ with $\tEn_{\alpha, \eps} (\msp) > C$ one has:
    \begin{equation}
        \label{tailsEst1}
        \frac{\mathbb{E}_{\mgr} \left[ \tEn_{\alpha, \eps} (f_* \msp) \right]}{\tEn_{\alpha, \eps} (\msp)} \in (1 - \delta, 1 + \delta).
    \end{equation}
\end{proposition}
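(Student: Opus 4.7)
The plan is to combine three ingredients: the pointwise change-of-energy bound Proposition~\ref{prop:enChangeOneStep}, the two-sided comparability between $\En_{\alpha,\eps}$ and $\tEn_{\alpha,\eps}$ (Proposition~\ref{prop:EnEquiv} in the form of Remark~\ref{r:p-4.1}), and an elementary consequence of the finite $\gamma$-moment assumption, namely that $\mathbb{E}_{\mgr}[\fL(f)^{\pm\alpha}]$ can be made as close to $1$ as desired by taking $\alpha$ small. This last point is where~\eqref{PosMomentCond} is actually used, so I would establish it first. Since $\fL(f) \ge 1$ always, it suffices to control $\mathbb{E}_{\mgr}[\fL(f)^{\alpha}]$ from above and $\mathbb{E}_{\mgr}[\fL(f)^{-\alpha}]$ from below. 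For the upper bound I split the integral at a threshold $R$: on $\{\fL(f) \le R\}$ the integrand $\fL(f)^\alpha$ is at most $R^\alpha$, while for $\alpha < \gamma$ on $\{\fL(f) > R\}$ one has $\fL(f)^\alpha \le R^{\alpha-\gamma} \fL(f)^\gamma$, so the tail is bounded by $C_0 R^{\alpha-\gamma}$. Choosing $R$ large and then $\alpha$ small makes both terms arbitrarily close to $1$; the lower bound for $\mathbb{E}_{\mgr}[\fL(f)^{-\alpha}]$ is analogous, using $\fL(f)^{-\alpha} \ge R^{-\alpha}$ on $\{\fL(f) \le R\}$ and Markov's inequality $\mgr(\fL(f) > R) \le C_0 R^{-\gamma}$.

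Once this is in place, applying Proposition~\ref{prop:enChangeOneStep} pointwise and integrating in $f$ yields
\[
\mathbb{E}_{\mgr}[\fL(f)^{-\alpha}] \cdot \En_{\alpha,\eps}(\msp) \;\le\; \mathbb{E}_{\mgr}[\En_{\alpha,\eps}(f_*\msp)] \;\le\; \mathbb{E}_{\mgr}[\fL(f)^{\alpha}] \cdot \En_{\alpha,\eps}(\msp),
\]
so the expectation of $\En_{\alpha,\eps}(f_*\msp)$ is multiplicatively close to $\En_{\alpha,\eps}(\msp)$. To convert this into a bound for $\tEn_{\alpha,\eps}$, I fix $\delta_1, \delta' > 0$ small enough that both $(1+\delta_1)^2(1+\delta')$ and $(1-\delta')/(1+\delta_1)^2$ lie in $(1-\delta/2, 1+\delta/2)$, and apply Remark~\ref{r:p-4.1} with this $\delta_1$ to obtain a constant $C_1$ such that $\En_{\alpha,\eps}(\msp') \approx_{(\delta_1, C_1)} \tEn_{\alpha,\eps}(\msp')$ for every probability measure $\msp'$ and every $\eps$. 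Using this equivalence both for $\msp$ and, inside the expectation, for each $f_*\msp$, and sandwiching with the previous inequality, I obtain
\[
\frac{1 - \delta'}{(1+\delta_1)^2} \tEn_{\alpha,\eps}(\msp) - C_2 \;\le\; \mathbb{E}_{\mgr}[\tEn_{\alpha,\eps}(f_*\msp)] \;\le\; (1+\delta_1)^2(1+\delta') \tEn_{\alpha,\eps}(\msp) + C_2,
\]
for an explicit $C_2 = C_2(\delta_1, \delta', C_1)$. Finally, setting $C := 2 C_2/\delta$ and restricting to measures with $\tEn_{\alpha,\eps}(\msp) > C$ absorbs the additive $C_2$ into the remaining $\delta/2$ slack of the multiplicative factors, delivering~\eqref{tailsEst1}.

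The only real subtlety is the error bookkeeping. The additive constant $C_1$ from Proposition~\ref{prop:EnEquiv} does not shrink as $\delta_1 \to 0$ (typically it grows), so it cannot be absorbed multiplicatively and must instead be dominated by $\tEn_{\alpha,\eps}(\msp)$ itself; this is exactly why the conclusion is formulated under the hypothesis $\tEn_{\alpha,\eps}(\msp) > C$, and why $C$ must be chosen last, after $\delta_1$, $\delta'$, and $\alpha_0$ have been fixed. Crucially, none of the constants produced by the argument depend on $\eps$ or on $\msp$ itself, which is required by the statement.
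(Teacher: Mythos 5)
Your proposal is correct and follows essentially the same route as the paper: first establish the multiplicative estimate for $\En_{\alpha,\eps}$ by integrating Proposition~\ref{prop:enChangeOneStep}, then transfer it to $\tEn_{\alpha,\eps}$ via the $(\delta,C)$-equivalence of Remark~\ref{r:p-4.1} applied both to $\msp$ and to each $f_*\msp$, absorbing the additive constants using the hypothesis $\tEn_{\alpha,\eps}(\msp)>C$. The only (harmless) difference is that you control $\mathbb{E}_{\mgr}[\fL(f)^{\pm\alpha}]$ by a truncation-at-$R$ argument, whereas the paper gets the same bounds more quickly from H\"older's inequality, $\mathbb{E}[\fL(f)^{\alpha}]\le(\mathbb{E}[\fL(f)^{\gamma}])^{\alpha/\gamma}\le C_0^{\alpha/\gamma}$, together with Jensen's inequality for the reciprocal.
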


For any given individual diffeomorphism $f$ and any measure $\msp$, the quotient $\En_{\alpha, \eps} (f_*\msp)/\En_{\alpha, \eps} (\msp)$ does not exceed $\fL(f)^{\alpha}$ and hence converges to $1$ as $\alpha$ tends to 0 uniformly in~$\eps$. Now, conclusion~\eqref{tailsEst1} is in a sense bringing the expectation under the limit, that can be justified via the Lebesgue dominated convergence theorem. We provide a formal proof of this proposition in Section~\ref{ss.proof417}. 

%

\subsection{Normalizations $\mTn_{\alpha,\eps}$, Wesserstein distance}\label{s:W}

In the same way as in the sketch of the proof, we consider the normalizations $\mTn_{\alpha,\eps}$ of measures $\mT_{\alpha,\eps}$. It is these measures that would turn out to be closer and closer to having a deterministic image assuming that the conclusions of our main results do not hold.
\begin{definition}
\begin{equation}\label{eq:mTn}
	\mTn_{\alpha, \eps} (\msp) = \frac{1}{\tEn_{\alpha, \eps} (\msp)} \, \mT_{\alpha, \eps}[\msp].
\end{equation}
\end{definition}

In what follows it will be convenient to use Wasserstein metric in the space of probability measures on~$M$. Let us recall its definition, as well as definition of the total variation distance between measures:
\begin{definition}
    Let $\msp_1, \msp_2$ be two probability measures on a measure space $(M, \mathcal{B})$. Then the \emph{Wasserstein distance} between them is defined as
    \begin{equation*}
        W(\msp_1, \msp_2) = \inf_{\gamma} \iint_{M\times M} d(x, y) \, d \gamma(x, y),
    \end{equation*}
    where the infimum is taken over all probability measures $\gamma$ on $(M \times M, \mathcal{B} \times \mathcal{B})$ with the marginals (projections on the $x$ and $y$ coordinates) $P_x(\gamma) = \msp_1$ and $P_y(\gamma) = \msp_2$.
\end{definition}
\begin{definition}
    Let $\msp_1, \msp_2$ be two probability measures on a measure space $(M, \mathcal{B})$. Then the \emph{total variation} distance between them is
    \[
    	\TV(\msp_1, \msp_2) = \sup_{B \in \mathcal{B}} |\msp_1(B) - \msp_2(B)|.
    \]
\end{definition}

Also, we will need the following statement that can be found, for example, in \cite[Theorem 6.15]{V}: the Wasserstein metric is bounded from above by a the diameter times the total variation distance.
\begin{lemma} \label{lm:WasEst}
    For any two probability measures $\msp_1, \msp_2 $ on a manifold $M$ we have
    \begin{equation*}
        W(\msp_1, \msp_2) \le \diam(M) \cdot \TV(\msp_1, \msp_2).
    \end{equation*}
\end{lemma}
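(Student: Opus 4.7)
The plan is to construct an explicit coupling $\gamma$ of $\msp_1$ and $\msp_2$ that places as much mass as possible on the diagonal $\Delta = \{(x,x) : x \in M\} \subset M \times M$. Since $d(x,y) = 0$ on the diagonal and $d(x,y) \le \diam(M)$ everywhere else, the integral defining the Wasserstein distance will then be controlled by the amount of off-diagonal mass, which should be exactly $\TV(\msp_1,\msp_2)$.

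The first step is to apply the Hahn--Jordan decomposition to the signed measure $\msp_1 - \msp_2$, writing $\msp_1 - \msp_2 = \mu_+ - \mu_-$ with mutually singular positive measures $\mu_\pm$. Since $\msp_1(M) = \msp_2(M) = 1$, the total masses satisfy $\mu_+(M) = \mu_-(M) =: \tau$, and a standard identification shows $\tau = \TV(\msp_1,\msp_2)$. If $\tau = 0$ the measures coincide and the claim is trivial, so I assume $\tau > 0$. Setting $\lambda := \msp_1 - \mu_+ = \msp_2 - \mu_-$, I obtain a positive measure of total mass $1 - \tau$ that is dominated by both $\msp_1$ and $\msp_2$.

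The second step is to define the coupling
\[
    \gamma := (\mathrm{id}\times \mathrm{id})_*\, \lambda \;+\; \frac{1}{\tau}\,\mu_+ \otimes \mu_-,
\]
where the first term is the pushforward of $\lambda$ under the diagonal embedding. A direct computation of the marginals gives, for the first projection,
\[
    (P_x)_* \gamma = \lambda + \frac{1}{\tau}\,\mu_+(\cdot)\cdot \mu_-(M) = \lambda + \mu_+ = \msp_1,
\]
and symmetrically $(P_y)_*\gamma = \msp_2$, so $\gamma$ is a valid transport plan.

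The third step is to estimate:
\[
    \iint_{M\times M} d(x,y)\, d\gamma(x,y) = 0 + \frac{1}{\tau}\iint d(x,y)\, d\mu_+(x)\, d\mu_-(y) \le \diam(M)\cdot \tau,
\]
since the diagonal term vanishes and $d(x,y) \le \diam(M)$ on the second factor, whose total mass is $\tau \cdot \tau / \tau = \tau$. Taking the infimum over couplings yields $W(\msp_1,\msp_2) \le \diam(M)\cdot \TV(\msp_1,\msp_2)$. There is essentially no serious obstacle: the only subtle point is verifying that $\TV$ as defined via the supremum over Borel sets coincides with $\mu_+(M) = \mu_-(M)$ coming from the Hahn--Jordan decomposition, which is a standard fact about signed measures on a Polish space. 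Alternatively, one can simply cite \cite[Theorem 6.15]{V} for the inequality, as the authors note.
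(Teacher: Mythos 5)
Your proof is correct. The paper itself does not prove this lemma; it simply cites \cite[Theorem 6.15]{V}. Your argument supplies the standard self-contained proof of that cited result: the Hahn--Jordan decomposition $\msp_1-\msp_2=\mu_+-\mu_-$ gives equal masses $\tau=\mu_\pm(M)=\TV(\msp_1,\msp_2)$, the residual $\lambda=\msp_1-\mu_+=\msp_2-\mu_-$ is nonnegative, the coupling $\gamma=(\mathrm{id}\times\mathrm{id})_*\lambda+\tau^{-1}\mu_+\otimes\mu_-$ has the right marginals, and the transport cost is supported on the off-diagonal part of total $\gamma$-mass $\tau$, each pair contributing at most $\diam(M)$. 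All steps check out, including the identification of the supremum definition of $\TV$ with $\mu_+(M)$ via the positive set of the Hahn decomposition. This buys a self-contained proof at the cost of a few lines; the paper's choice to cite buys brevity. Either is fine for a statement this standard.
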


We are going to use the following statement. Assuming that $\alpha$ is sufficiently small, the energy of a measure $\msp$ is sufficiently high, and a diffeomorphism $f$ is ``not too distorting'', not only are the energies of $\msp$ and of~$f_*\msp$  close to each other, but also the measure $\mTn_{\alpha,\eps}[f_*\msp]$ is close to the push-forward $f_* \mTn_{\alpha,\eps}[\msp]$.

\begin{proposition} \label{prop:WassEst}
	For any $\delta > 0$ and any $R > 0$ there exists $\alpha_0 > 0$ such that for any $\alpha \in (0,\alpha_0)$ there exists $C > 0$ such that for any $f \in \Diff^1(M)$ with $\fL(f) < R$, any $\eps > 0$, and any $\msp$ such that $\tEn_{\alpha, \eps} (\msp) > C$ or $\En_{\alpha, \eps} (\msp) > C$  one has
	\begin{equation} \label{tEnWassInv}
		W(f_* \mTn_{\alpha, \eps} (\msp), \mTn_{\alpha, \eps} (f_* \msp)) < \delta.
	\end{equation}
\end{proposition}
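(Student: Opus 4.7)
My strategy exploits the $L^2$-structure underlying $\tEn_{\alpha,\eps}$. Setting
\[
\psi(y) := \dens_{\alpha,\eps}[\msp](f^{-1}(y)) \cdot |\det Df^{-1}(y)|^{1/2},
\]
a change of variables shows that $f_* \mT_{\alpha,\eps}[\msp]$ has Lebesgue density $\psi^2$ and $\|\psi\|_{L^2}^2 = \tEn_{\alpha,\eps}(\msp)$, while $\mT_{\alpha,\eps}[f_*\msp]$ has Lebesgue density $\dens_{\alpha,\eps}^2[f_*\msp]$ with $L^2$-norm squared $\tEn_{\alpha,\eps}(f_*\msp)$. The plan is to bound the Wasserstein distance via total variation through Lemma~\ref{lm:WasEst}; applying the factorization $a^2 - b^2 = (a-b)(a+b)$ together with Cauchy--Schwarz, this reduces (modulo the normalization ratio, close to $1$ by Corollary~\ref{cor:tEnChangeOneStep}) to showing that $\|\psi - \dens_{\alpha,\eps}[f_*\msp]\|_{L^2}$ is much smaller than $\sqrt{\tEn_{\alpha,\eps}(\msp)}$.

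Expanding the squared $L^2$-norm,
\[
\|\psi - \dens_{\alpha,\eps}[f_*\msp]\|_{L^2}^2 = \tEn_{\alpha,\eps}(\msp) + \tEn_{\alpha,\eps}(f_*\msp) - 2\langle\psi, \dens_{\alpha,\eps}[f_*\msp]\rangle,
\]
so the crux is showing that the cross term is close to $\sqrt{\tEn_{\alpha,\eps}(\msp)\cdot\tEn_{\alpha,\eps}(f_*\msp)}$. Unfolding by Fubini after a change of variables in $y$, this cross term becomes
\[
\iiint \varphi_{\alpha,\eps}(d(x_1, y)) \cdot |\det Df(y)|^{1/2} \cdot \varphi_{\alpha,\eps}(d(f(x_2), f(y)))\, d\msp(x_1)\, d\msp(x_2)\, d\Leb(y).
\]
In the high-energy regime the dominant contribution comes from configurations in which $y$ lies within scale $\lesssim \eps$ of both $x_i$; there the manifold is essentially Euclidean and $f$ is well approximated by its linearization $Df(y)$. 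A Euclidean change of variables $w = Df(y)\cdot z$ combined with the near-scale-invariance of $\varphi_{\alpha,\eps}$ for small $\alpha$ (recall that $\varphi_{0,\eps}(r) = r^{-k/2}$ is exactly scale-invariant) relates the inner $d\Leb(y)$ integral of the cross term to the analogous integrals in $\tEn_{\alpha,\eps}(\msp)$ and $\tEn_{\alpha,\eps}(f_*\msp)$, yielding near-equality in Cauchy--Schwarz in the limit $\alpha \to 0$.

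The main obstacle is making this linearization-plus-scaling argument quantitative and uniform over $f$ with $\fL(f) < R$, over $\eps > 0$, and over measures $\msp$ of sufficiently large energy. My plan mirrors the strategy of the proof of Proposition~\ref{prop:EnEquiv} (which already compares two similar-looking pairwise integrals via local Euclideanness): decompose the integrals by distance scale using the slow variation property inherited from Proposition~\ref{p:properties}~(\ref{i-4}), bound off-diagonal contributions uniformly using Corollary~\ref{c:finite}, and on the near-diagonal scale pass to local Euclidean coordinates via the Riemannian exponential map at $y$, where the $C^1$-linearization of $f$ and the scaling computation apply. Combined with the normalization estimate from Corollary~\ref{cor:tEnChangeOneStep}, this yields $\|\psi - \dens_{\alpha,\eps}[f_*\msp]\|_{L^2}^2 = o_{\alpha \to 0}(\tEn_{\alpha,\eps}(\msp))$ uniformly, and Lemma~\ref{lm:WasEst} then delivers the Wasserstein bound $W < \delta$ once $\alpha$ is chosen small enough.
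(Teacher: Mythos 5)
Your overall architecture --- reduce the Wasserstein bound to a total variation bound via Lemma~\ref{lm:WasEst}, then control the $L^2$ difference $\|\psi-\dens_{\alpha,\eps}[f_*\msp]\|_{L^2}$ --- founders at its crux. The estimate $\|\psi-\dens_{\alpha,\eps}[f_*\msp]\|_{L^2}^2=o(\tEn_{\alpha,\eps}(\msp))$, i.e.\ near-equality in Cauchy--Schwarz, is false whenever $Df$ is not conformal, and so is the TV-closeness of $f_*\mTn_{\alpha,\eps}(\msp)$ and $\mTn_{\alpha,\eps}(f_*\msp)$ that it would imply. To see this, take $\msp=\delta_{x_0}$ and let $\eps\to0$ (then $\tEn_{\alpha,\eps}(\msp)=U_{\alpha,\eps}(0)=\frac{L}{\alpha}\eps^{-\alpha}$ exceeds any prescribed $C$, so the hypotheses are met), with $f$ linear, volume-preserving and non-conformal in a chart around $x_0$. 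Writing $A=Df(x_0)$, your $\psi$ is essentially $\varphi_{\alpha,\eps}(|A^{-1}z|)$ while $\dens_{\alpha,\eps}[f_*\msp]$ is essentially $\varphi_{\alpha,\eps}(|z|)$; since $\varphi_{\alpha,\eps}(r)\asymp r^{-k/2}$ up to $O(\alpha)$ corrections in the exponent, their ratio is $(|z|/|A^{-1}z|)^{k/2}$ to leading order, bounded away from $1$ on a cone of directions carrying a definite fraction of the measure. Hence $\|\psi-\dens_{\alpha,\eps}[f_*\msp]\|_{L^2}^2\ge c\,\tEn_{\alpha,\eps}(\msp)$ with $c>0$ independent of $\eps$ and of small $\alpha$, and the two normalized measures stay at a definite TV distance. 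The proposition survives only because it asserts Wasserstein closeness: in this example both measures concentrate, in the Wasserstein sense, at the single point $f(x_0)$ as $\eps\to0$. The root of the problem is that $\varphi_{\alpha,\eps}$ varies like $r^{-k/2}$, not slowly, so no choice of small $\alpha$ makes the pointwise densities match; taking $\alpha\to 0$ helps only for quantities governed by the $\alpha$-slowly varying kernel $U_{\alpha,\eps}$.

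The paper's proof is built precisely to avoid this trap. It interposes the auxiliary measure $\mTfn_{\alpha,\eps}(\msp)$, the normalized first-coordinate marginal of $\varphi_{\alpha,\eps}(d(x,y))\varphi_{\alpha,\eps}(d(z,y))\,d\msp(x)\,d\Leb(y)\,d\msp(z)$, which is absolutely continuous with respect to $\msp$ itself. The comparison of $f_*\mTfn_{\alpha,\eps}(\msp)$ with $\mTfn_{\alpha,\eps}(f_*\msp)$ is then a genuine TV comparison, but of densities governed by $K_{\alpha,\eps}\approx U_{\alpha,\eps}$, and conclusion~(\ref{i-4}) of Proposition~\ref{p:properties} makes that kernel change only by a factor $\fL(f)^{\pm\alpha}$ under $f$ --- this is where smallness of $\alpha$ actually does its work. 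The residual discrepancy between $\mTn_{\alpha,\eps}$ and $\mTfn_{\alpha,\eps}$ is controlled only in Wasserstein distance, via concentration of the triple integral near the diagonal (Lemma~\ref{l:W-close}). To salvage your approach you would have to abandon the TV reduction for this part and make a transport-theoretic comparison at the level of the three-fold integral; that is, in effect, exactly what the paper does.
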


The idea of the proof of this proposition is close to the one for Proposition~\ref{prop:EnEquiv}: large energy means that most of it comes from points close to each other. Moreover, considering the energy as a triple integral~\eqref{eq:triple}, one sees that most of it comes out from all the three points being close to each other.
Now, when a diffeomorphism $f$ is applied, these local contributions are changed \emph{roughly} in the same way as the kernel~$U_{\alpha,\eps}(x,z)$. Finally, the latter almost does not change if $\fL(f)^{\alpha}$ is close to~1, thus implying the desired closeness of two measures.

Once again, we postpone the formal proof of this proposition until Section~\ref{s:WE}. 


\section{Proofs of main results}\label{s:proofs}

In this section we prove Theorems \ref{t:main-1}, \ref{t:main-2}, and \ref{t:main-3}, except for the technically complicated parts, that we moved to Section~\ref{s.tip} (specifically,  the proofs of Propositions~\ref{p:properties},~\ref{prop:EnEquiv},~\ref{lm:tailsEst},~\ref{prop:WassEst}, together with the proof of Lemma~\ref{l.West}).


\subsection{Exponential decrease of large energies}\label{s:exponential}

The key step of the proof of our main results is the following proposition:

\begin{proposition} \label{t:contr}
	If the measure $\mgr$ satisfies \textbf{finite moment condition} and \textbf{no deterministic images} then there exist $\alpha > 0, C < \infty, \lambda < 1$, such that for any $\eps > 0$ if $\tEn_{\alpha, \eps} (\msp) > C$ then
	\begin{equation} \label{EnContr}
		\tEn_{\alpha, \eps} (\mgr * \msp) < \lambda \tEn_{\alpha, \eps} (\msp).
	\end{equation}
\end{proposition}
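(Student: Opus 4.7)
The approach is by contradiction. Suppose no such triple $\alpha, C, \lambda$ exists. Then for each $n\in\bbN$ one can choose $\alpha_n \to 0$, $\eps_n > 0$ and probability measures $\msp_n$ on $M$ with $\tEn_{\alpha_n,\eps_n}(\msp_n) \to \infty$ but $\tEn_{\alpha_n,\eps_n}(\mgr*\msp_n) \ge \lambda_n\tEn_{\alpha_n,\eps_n}(\msp_n)$ for some $\lambda_n \to 1$. The plan is to extract from the normalised measures $\theta_n := \mTn_{\alpha_n,\eps_n}(\msp_n)$ a weak-$*$ limit $\theta$ such that $f_*\theta$ does not depend on $f$ for $\mgr$-almost every $f$, contradicting the \textbf{no deterministic images} hypothesis.

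The principal tool is the $L^2$-interpretation of $\tEn$. Setting $g_f := \dens_{\alpha,\eps}[f_*\msp] \in L^2(M,\Leb)$, one has $\dens_{\alpha,\eps}[\mgr*\msp] = \mathbb{E}_\mgr[g_f]$, and therefore
\begin{align*}
\tEn_{\alpha,\eps}(\mgr*\msp) &= \bigl\|\mathbb{E}_\mgr[g_f]\bigr\|_{L^2}^2 = \iint \langle g_f, g_{f'}\rangle\, d\mgr(f)\,d\mgr(f') \\
&\le \iint \|g_f\|\,\|g_{f'}\|\, d\mgr(f)\,d\mgr(f') \le \mathbb{E}_\mgr[\|g_f\|^2].
\end{align*}
By Proposition~\ref{lm:tailsEst}, once $\alpha$ is small and the energy is large, the right-hand side is $(1+o(1))\tEn_{\alpha,\eps}(\msp)$, so the hypothesised lower bound forces both inequalities to be saturated. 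Saturation of Cauchy--Schwarz in $f$ shows that $\|g_f\|$ is essentially constant in $f$ (in particular, $\tEn_{\alpha,\eps}(f_*\msp)$ is large for $\mgr$-typical $f$); saturation of the pointwise cosine inequality shows that the unit vectors $\tilde g_f := g_f/\|g_f\|$ are pairwise $L^2$-close for $(\mgr\otimes\mgr)$-typical $(f,f')$.

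Since $\tilde g_f^2\,d\Leb = \mTn_{\alpha,\eps}(f_*\msp)$, the factorisation $\tilde g_f^2 - \tilde g_{f'}^2 = (\tilde g_f - \tilde g_{f'})(\tilde g_f + \tilde g_{f'})$ combined with Cauchy--Schwarz transfers the $L^2$-closeness of the $\tilde g_f$ to total variation closeness of the normalised measures, and Lemma~\ref{lm:WasEst} upgrades this to Wasserstein closeness. Proposition~\ref{prop:WassEst} then allows us to replace $\mTn_{\alpha,\eps}(f_*\msp)$ by $f_*\mTn_{\alpha,\eps}(\msp)$ up to small Wasserstein error whenever $\fL(f) \le R$; the tail $\{\fL(f) > R\}$ has $\mgr$-mass at most $C_0 R^{-\gamma}$ by Markov and the finite moment condition, and is discarded by taking $R$ large. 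We conclude that $W(f_*\theta_n, f'_*\theta_n) \to 0$ in $\mgr\otimes\mgr$-probability on $\{\fL(f), \fL(f') \le R\}$.

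Finally, passing to a weak-$*$ limit $\theta$ of a subsequence of $\theta_n$ yields a probability measure on the compact manifold $M$ (mass is preserved by compactness). Continuity of $f_*$ for fixed $f\in\Diff^1(M)$ in the weak-$*$ topology then gives $f_*\theta = f'_*\theta$ for $(\mgr\otimes\mgr)$-almost every pair $(f,f')$ with $\fL(f),\fL(f')\le R$; sending $R\to\infty$ via the moment condition shows that $f_*\theta$ takes a single deterministic value for $\mgr$-almost every $f$, contradicting \textbf{no deterministic images}. The main obstacle is managing the three near-approximations of Propositions~\ref{prop:EnEquiv},~\ref{prop:WassEst} and~\ref{lm:tailsEst} simultaneously and uniformly in the scale $\eps$: one must verify that the ``large energy'' hypothesis propagates to $f_*\msp$ for $\mgr$-typical $f$ so that Proposition~\ref{prop:WassEst} may be applied, and that the sequence $\theta_n$ does not lose mass on passing to the weak-$*$ limit despite $\dens_{\alpha_n,\eps_n}[\msp_n]$ being allowed to concentrate on ever finer scales.
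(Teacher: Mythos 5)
Your proposal is correct and follows essentially the same route as the paper: contradiction, the $L_2$ (variance) identity for $\dens_{\alpha,\eps}$ combined with Proposition~\ref{lm:tailsEst} to force near-alignment of the vectors $g_f$, transfer to total-variation and then Wasserstein closeness of the normalised measures, Proposition~\ref{prop:WassEst} to pass to $f_*\mTn_{\alpha,\eps}(\msp)$, and a weak-$*$ limit contradicting the no-deterministic-images hypothesis. The only cosmetic difference is that you compare $f_*$-images pairwise (saturating Cauchy--Schwarz in $(f,f')$) whereas the paper compares each $\mTn_{\alpha,\eps}(f_*\msp)$ to the single averaged measure $\mTn_{\alpha,\eps}(\mgr*\msp)$; both yield the same conclusion.
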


\begin{proof}
   Fix some small number $\delta > 0$, take $\lambda=1-\delta$, and assume the contrary: for any $\alpha > 0$ and $C < \infty$ there exists an $\eps > 0$ and a probability measure $\msp$ with energy $\tEn_{\alpha,\eps}(\msp)>C$, such that
	\begin{equation} \label{contrAssumption2}
		\tEn_{\alpha, \eps} (\mgr * \msp) \ge (1-\delta) \tEn_{\alpha, \eps} (\msp).
	\end{equation}

Recall the following standard statement:
\begin{lemma}\label{l:var}
Assume that in some Hilbert space $\mathcal{H}$ a probability measure is given; in other words, one is given a random variable $v$ taking values in this space. Assume that this measure has finite second moment, and let $\bv:=\E v$ be its expectation. Then
\begin{equation}\label{eq:v}
\E \langle v-\bv,v-\bv\rangle = \E \langle v,v\rangle - \langle \bv,\bv\rangle
\end{equation}
\end{lemma}
In dimension one, it is exactly the equivalence between the two definitions of variance of a random variable; and in general, it is  straightforward to check (\ref{eq:v}): it suffices to substitute $v=(v-\bv)+\bv$ to the expectation in the right hand side and use the linearity of the scalar product.

Now, let us apply this statement to~$L_2(M, \text{\rm Leb})$. Namely, for a given measure $\msp$ its averaged image $\mgr*\msp$ is the expectation of $f_*\msp$, where the diffeomorphism $f$ of $M$ is taken randomly w.r.t. the measure~$\mgr$.
Hence, the same applies for the density~$\dens_{\alpha,\eps}[\msp]$, given by~\eqref{eq:dens-a-e} (see Definition~\ref{d:tEn}):
\begin{equation}\label{eq:rho-expect}
\dens_{\alpha,\eps}[\mgr*\msp] = \E_{\mgr}  \, \dens_{\alpha,\eps}[f_* \msp].
\end{equation}
Substituting this into~\eqref{eq:v} (with $v=\dens_{\alpha,\eps}[f_* \msp]$), and taking into account the definition~\eqref{eq:tEn}, we get
\begin{multline} \label{ContrThmVarFormula}
		\mathbb{E}_{\mgr} \left[ \int_M \left( \dens_{\alpha, \eps}[f_* \msp] (y) - \dens_{\alpha, \eps}[\mgr * \msp] (y) \right)^2 d \Leb(y) \right]=
		\\
		= \mathbb{E}_{\mgr} \left[ \tEn_{\alpha, \eps} (f_* \msp) \right] - \tEn_{\alpha, \eps} (\mgr * \msp);
	\end{multline}
in particular,
\begin{equation}\label{eq:avg-bound}
 \tEn_{\alpha, \eps} (\mgr * \msp) \le \mathbb{E}_{\mgr} \left[ \tEn_{\alpha, \eps} (f_* \msp) \right]
\end{equation}

By Proposition~\ref{lm:tailsEst} we can find sufficiently small $\alpha  > 0$ and sufficiently large $C < \infty$, such that for any $\eps>0$ and any measure $\msp$ on $M$
with $\tEn_{\alpha, \eps}(\msp) > C$ the inequality~\eqref{tailsEst1} holds, and thus (taking only the upper bound)
\begin{equation}\label{eq:E-f-delta}
        \mathbb{E}_{\mgr} \left[ \tEn_{\alpha, \eps} (f_* \msp) \right]
         \le (1 + \delta)\tEn_{\alpha, \eps} (\msp).
\end{equation}

Denote by $\msp$ a probability measure with $\tEn_{\alpha, \eps}(\msp) > C$, for which the inequality~\eqref{contrAssumption2} holds. Substituting~\eqref{eq:E-f-delta} and~\eqref{contrAssumption2} in the right hand side of~\eqref{ContrThmVarFormula}, we get

	\begin{equation} \label{eq:exp-rho}
		\mathbb{E}_{\mgr} \left[ \int_M \left( \dens_{\alpha, \eps} (f_* \msp) (y) - \dens_{\alpha, \eps} (\mgr * \msp) (y) \right)^2 d \Leb(y) \right] < 2 \delta \, \tEn_{\alpha, \eps} (\msp).
	\end{equation}
and thus
	\begin{equation} \label{ExpEst1}
		\mathbb{E}_{\mgr} \left[ \int_M \frac{\left( \dens_{\alpha, \eps} (f_* \msp) (y) - \dens_{\alpha, \eps} (\mgr * \msp) (y) \right)^2 }{\tEn_{\alpha, \eps} (\msp)} d \Leb(y) \right] < 2 \delta .
	\end{equation}

    The final step is to show that measures
    $$
    \mTn_{\alpha, \eps} (f_* \msp)=\dfrac{\dens_{\alpha, \eps} (f_* \msp) (y)^2}{\tEn_{\alpha, \eps} (f_* \msp)} d \Leb(y) \ \  \text{and} \ \ \mTn_{\alpha, \eps}(\mgr * \msp)=\dfrac{\dens_{\alpha, \eps} (\mgr * \msp) (y)^2}{\tEn_{\alpha, \eps} (\mgr * \msp)} d \Leb(y)
    $$
     are close with high probability.
     And indeed, we have the following statement, estimating (even) the total variations distance:
    \begin{lemma}\label{l.West}
    Under the assumptions above
\begin{equation}\label{eq:Probably-TV}
    \p_{\mgr} \left[ \TV (\mTn_{\alpha, \eps} (f_* \msp), \mTn_{\alpha, \eps}(\mgr * \msp))
    		> 10 \sqrt[8]{\delta} \right]
 	   < 4 \sqrt{\delta},
\end{equation}
and hence
\begin{equation}\label{eq:Probably}
    \p_{\mgr} \left[ W(\mTn_{\alpha, \eps} (f_* \msp), \mTn_{\alpha, \eps}(\mgr * \msp)) > 10 \diam(M) \sqrt[8]{\delta} \right] < 4 \sqrt{\delta}.
\end{equation}
    \end{lemma}
    Proof of Lemma~\ref{l.West} is the most technical part of the proof of Proposition~\ref{t:contr},
    and we postpone it till Section~\ref{s:W-proof}. 

\vspace{3pt}

We are now ready to conclude the proof of Proposition~\ref{t:contr}. Namely, the conclusion~\eqref{eq:Probably} states that with high probability  the measure $\mTn_{\alpha,\eps}(f_* \msp)$ is close to the deterministic one, $\mTn_{\alpha,\eps}(\mgr * \msp)$. The next step is to show that with high probability the first measure is close to $f_*$-image of a given measure,~$\mTn_{\alpha,\eps}(\msp)$.

To do so, choose $R < \infty$, such that
    \begin{equation*}
        \p_{\mgr} \left[ \fL(f) < R \right] > 1 - \sqrt[4]{\delta}
    \end{equation*}
    By Proposition \ref{prop:WassEst} we can choose $C$ big enough, so that for any $f \in \Diff^1(M)$, such that $\fL(f) < R$ inequality (\ref{tEnWassInv}) holds, and using triangle inequality we arrive to
	\begin{equation} \label{limitLaw}
		\p_{\mgr} \left[ W(f_* \mTn_{\alpha, \eps} (\msp), \mTn_{\alpha, \eps}(\mgr * \msp)) > 10 \diam(M) \sqrt[8]{\delta} + \delta \right] < 5 \sqrt[4]{\delta}.
	\end{equation}

	Now consider the sequence $\delta_n = \frac{1}{n}$ and denote by $\msp_n$ the corresponding measure and corresponding parameters by $\alpha_n, \eps_n$, for which (\ref{contrAssumption2}) holds. Then the measures $\mTn_{\alpha_n, \eps_n} (\msp_n)$ have a weakly convergent subsequence. Futhermore, we can take another subsequence, such that the measures $\mTn_{\alpha_{n_k}, \eps_{n_k}} (\mgr * \msp_{n_k})$ also converge. It remains to notice that due to inequality (\ref{limitLaw}) the limit measure
	\begin{equation*}
		m = \lim_{k \to \infty} \mTn_{\alpha_{n_k}, \eps_{n_k}}(\msp_{n_k})
	\end{equation*}
	has an almost surely constant image under the action of $f \in \supp(\mgr)$, that is equal to
	\begin{equation*}
		\tilde{m} = \lim_{k \to \infty} \mTn_{\alpha_{n_k}, \eps_{n_k}} (\mgr * \msp_{n_k}),
	\end{equation*}
	which  contradicts our assumption.	
\end{proof}

Finally, Proposition~\ref{t:contr} can be generalized by the following statement.

\begin{proposition} \label{rem:uniformContr}
    Let $\mK\subset \mM$ be a compact subset that satisfies the following conditions:
    \begin{itemize}
        \item \textbf{(uniform finite moment condition)} There exists $C_0, \gamma>0$ such that for any $\mgr\in\mK$ one has
        \begin{equation*}
            \int \fL(f)^\gamma \, d \mgr(f) < C_0;
        \end{equation*}
        \item \textbf{(no deterministic images)} For any $\mgr\in\mK$ there are no probability measures $\msp,\msp'$ on~$M$ such that $f_* \msp = \msp'$ for $\mgr$-almost all $f\in \Diff^1(M)$.
    \end{itemize}
    Then the constants $\alpha, \lambda$ and $C$ in Proposition~\ref{t:contr} can be chosen uniformly in $\mgr \in \mK$.
\end{proposition}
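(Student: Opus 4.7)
The plan is to mirror the proof of Proposition~\ref{t:contr}, arguing by contradiction and exploiting compactness of $\mK$ to extract a limiting distribution $\mgr_* \in \mK$. The only new work is (i) checking that every auxiliary estimate used there can be made uniform in $\mgr \in \mK$, and (ii) upgrading the final contradiction from one against a fixed $\mgr$ to one against the limit $\mgr_*$.

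First I would verify uniformity of the auxiliary ingredients. Proposition~\ref{prop:WassEst} takes the bound $R$ as external data, and the choice of $R$ in the proof of Proposition~\ref{t:contr} only served to guarantee $\p_\mgr[\fL(f) < R] > 1 - \sqrt[4]{\delta}$; the uniform moment condition combined with Markov's inequality produces a single $R = (C_0/\sqrt[4]{\delta})^{1/\gamma}$ valid for every $\mgr \in \mK$. An analogous inspection of Proposition~\ref{lm:tailsEst}, whose proof is a dominated-convergence argument with envelope $\fL(f)^\gamma$, shows that its thresholds $\alpha_0$ and $C$ depend on $\mgr$ only through $(\gamma, C_0)$. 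Hence every constant appearing in the derivation \eqref{ContrThmVarFormula}--\eqref{limitLaw} can be chosen uniformly in $\mgr \in \mK$.

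Assuming the uniform contraction fails, I would produce sequences $\mgr_n \in \mK$, $\alpha_n \downarrow 0$, $\eps_n > 0$, $\delta_n \downarrow 0$, and probability measures $\msp_n$ with $\tEn_{\alpha_n,\eps_n}(\msp_n) \to \infty$ and $\tEn_{\alpha_n,\eps_n}(\mgr_n * \msp_n) \ge (1-\delta_n) \tEn_{\alpha_n,\eps_n}(\msp_n)$. Passing to subsequences I may assume $\mgr_n \to \mgr_* \in \mK$ weakly, and that the normalized measures $\mTn_{\alpha_n,\eps_n}(\msp_n) \to m$ and $\mTn_{\alpha_n,\eps_n}(\mgr_n * \msp_n) \to \tilde m$ weakly on $M$. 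The uniform versions of Lemma~\ref{l.West} and Proposition~\ref{prop:WassEst} then yield
$$\p_{\mgr_n}\bigl[W\bigl(f_* \mTn_{\alpha_n,\eps_n}(\msp_n),\; \mTn_{\alpha_n,\eps_n}(\mgr_n * \msp_n)\bigr) > \eta_n\bigr] < \eta_n'$$
with $\eta_n,\eta_n' \to 0$.

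The main obstacle I anticipate is concluding from this estimate that $f_* m = \tilde m$ for $\mgr_*$-a.e.\ $f$, which would contradict the no-deterministic-images hypothesis for $\mgr_*$. The subtlety is that the test measures and the driving distribution are varying simultaneously, so the direct Borel--Cantelli argument used in Proposition~\ref{t:contr} does not immediately apply. I would pass to the limit via test functions: for arbitrary $\phi \in C(M)$ and bounded continuous $\psi$ on $\Diff^1(M)$, the probability estimate above implies
$$\int \psi(f)\,\Bigl[\int \phi\circ f\, d\mTn_{\alpha_n,\eps_n}(\msp_n) - \int \phi\, d\mTn_{\alpha_n,\eps_n}(\mgr_n*\msp_n)\Bigr]\, d\mgr_n(f) \to 0.$$
On each set $K_R := \{\fL(f) \le R\}$ the family $\{f \mapsto \phi \circ f\}_{f \in K_R}$ is equicontinuous, so weak convergence $\mTn_{\alpha_n,\eps_n}(\msp_n) \to m$ yields uniform convergence $\int \phi \circ f\, d\mTn_{\alpha_n,\eps_n}(\msp_n) \to \int \phi \circ f\, dm$ in $f \in K_R$. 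Combined with $\mgr_n \to \mgr_*$ weakly and the uniform tail bound $\mgr_n(K_R^c) \le C_0 R^{-\gamma}$, one obtains
$$\int \psi(f)\,\Bigl[\int \phi \circ f\, dm - \int \phi\, d\tilde m\Bigr]\, d\mgr_*(f) = 0$$
for every such $\phi, \psi$. Choosing a countable dense family of $\phi$'s in $C(M)$ then gives $f_* m = \tilde m$ for $\mgr_*$-a.e.\ $f$, contradicting the no-deterministic-images condition for $\mgr_*$ and completing the proof.
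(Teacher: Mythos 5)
Your proposal is correct and follows essentially the same route as the paper: argue by contradiction, observe that the constants in the auxiliary estimates depend on $\mgr$ only through $(\gamma, C_0)$ and are hence uniform over $\mK$, extract the three weak limits $m$, $\tilde m$, $\mgr_*$ along a subsequence, and derive $f_*m=\tilde m$ for $\mgr_*$-a.e.\ $f$, contradicting the no-deterministic-images hypothesis. The only difference is that you spell out the final limit passage (via test functions, equicontinuity on $\{\fL(f)\le R\}$, and the uniform tail bound) that the paper leaves implicit with ``in turn, this implies''; this is a legitimate and welcome elaboration, not a gap or a different method.
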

\begin{proof}
    Indeed, assume the contrary. It is easy to see that in the statements in~Section~\ref{s:tools} the constants can be chosen uniformly for all~$\mgr\in\mK$. Now, repeating the proof of Proposition~\ref{t:contr}, we see that the only part to be modified is passage to the limit. Again taking $\delta_n=\frac{1}{n}$ one finds $\alpha_n,\eps_n$ and a measure $\mgr_n\in \mK$ such that~\eqref{contrAssumption2} holds.
Passing to a subsequence $(n_k)$, one can ensure the existence of three limits:
    	\begin{equation*}
		m = \lim_{k \to \infty} \mTn_{\alpha_{n_k}, \eps_{n_k}}(\msp_{n_k})
	\end{equation*}
	\begin{equation*}
		\tilde{m} = \lim_{k \to \infty} \mTn_{\alpha_{n_k}, \eps_{n_k}} (\mgr_{n_k} * \msp_{n_k}),
	\end{equation*}
	\begin{equation*}
		\tilde{\mgr} = \lim_{k \to \infty} \mgr_{n_k}
	\end{equation*}
	In turn, this implies that for $\tilde{\mgr}$-a.e. diffeomorphism $f$ one has $f_* m = \tilde{m}$, thus obtaining a contradiction with the ``no deterministic images'' assumption at $\tilde{\mgr}\in\mK$.
\end{proof}

Finally, we note that the conclusion of Proposition~\ref{t:contr} can be modified to include all possible measures~$\msp$ (without assuming the energy higher than~$C$). To do so, we need to adjust the upper bound~\eqref{EnContr} in order to include low-energy measures~$\msp$:


\begin{lemma}\label{l:uniform-C}
Assume that the measure $\mgr$ satisfies the finite moment condition (\ref{PosMomentCond}) with some $\gamma$, and that $\alpha<\gamma$ and $C$ are given. Then there exists $C'$ such that for any measure $\msp$ on $M$ with
$\En_{\alpha, \eps} (\msp) \le C$ one has
\begin{equation}
\En_{\alpha, \eps} (\mgr*\msp) \le C'.
\end{equation}
\end{lemma}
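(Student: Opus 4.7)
The plan is to combine the $L^2$-convexity of $\tEn_{\alpha,\eps}$ with the one-step push-forward estimate for $\En_{\alpha,\eps}$, bridged via the equivalence from Remark~\ref{r:p-4.1}. The starting observation is that, since $\mgr*\msp=\int f_*\msp\,d\mgr(f)$, Fubini gives $\dens_{\alpha,\eps}[\mgr*\msp](y)=\E_{\mgr}\,\dens_{\alpha,\eps}[f_*\msp](y)$. Applying Jensen's inequality to the convex function $t\mapsto t^2$ and integrating in~$y$ against Lebesgue on~$M$ yields
\[
\tEn_{\alpha,\eps}(\mgr*\msp)\;\le\;\E_{\mgr}\,\tEn_{\alpha,\eps}(f_*\msp).
\]
This convexity step reduces the problem to controlling a typical $\tEn_{\alpha,\eps}(f_*\msp)$.

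Next, fix a convenient $\delta>0$ (say $\delta=1$) and apply Remark~\ref{r:p-4.1}: there is a constant $C_1=C_1(\alpha,\delta)$ such that $\En_{\alpha,\eps}(\nu)\approx_{(\delta,C_1)}\tEn_{\alpha,\eps}(\nu)$ for every $\eps>0$ and every probability measure~$\nu$ on~$M$. Combined with the one-step distortion bound $\En_{\alpha,\eps}(f_*\msp)\le \fL(f)^\alpha\,\En_{\alpha,\eps}(\msp)$ from Proposition~\ref{prop:enChangeOneStep}, this gives
\[
\tEn_{\alpha,\eps}(f_*\msp)\;\le\;(1+\delta)\,\fL(f)^\alpha\,\En_{\alpha,\eps}(\msp)+C_1\;\le\;(1+\delta)\,\fL(f)^\alpha\,C+C_1.
\]

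Finally, I would integrate against $\mgr$. Since $\fL(f)\ge 1$ for every diffeomorphism and $\alpha<\gamma$, we have $\fL(f)^\alpha\le \fL(f)^\gamma$, so the finite moment condition~\eqref{PosMomentCond} gives $\E_{\mgr}\,\fL(f)^\alpha\le C_0$. Hence $\tEn_{\alpha,\eps}(\mgr*\msp)\le(1+\delta)\,C_0\,C+C_1$, and converting back via the other half of Remark~\ref{r:p-4.1},
\[
\En_{\alpha,\eps}(\mgr*\msp)\;\le\;(1+\delta)\,\tEn_{\alpha,\eps}(\mgr*\msp)+C_1\;\le\;(1+\delta)^2 C_0\,C+(2+\delta)\,C_1,
\]
which is the desired uniform constant~$C'$, independent of $\eps$ and of~$\msp$ within the class $\En_{\alpha,\eps}(\msp)\le C$. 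The proof is essentially bookkeeping on top of tools already developed and presents no substantive obstacle; the only subtlety worth flagging is the passage from a $\gamma$-moment to an $\alpha$-moment of $\fL(f)$, which uses the elementary fact that $\fL(f)\ge 1$ always.
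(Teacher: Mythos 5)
Your proof is correct and follows essentially the same route as the paper's: the convexity inequality $\tEn_{\alpha,\eps}(\mgr*\msp)\le\E_{\mgr}\tEn_{\alpha,\eps}(f_*\msp)$ (which the paper extracts from the variance identity rather than Jensen, an immaterial difference), the $\En$--$\tEn$ equivalence of Proposition~\ref{prop:EnEquiv}, the one-step distortion bound of Proposition~\ref{prop:enChangeOneStep}, and the moment condition to control $\E_{\mgr}\fL(f)^{\alpha}$. Your explicit justification of the $\gamma$-to-$\alpha$ moment passage via $\fL(f)\ge 1$ is a detail the paper leaves implicit, but the argument is the same.
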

\begin{corollary}\label{c:En}
In the assumptions of Proposition~\ref{t:contr} one can conclude that there exist
$\alpha > 0, \tC < \infty, \lambda < 1$, such that for any $\eps > 0$ and any measure $\msp$ on $M$
	\begin{equation} \label{eq:EnContr}
		\En_{\alpha, \eps} (\mgr * \msp) < \max(\lambda \En_{\alpha, \eps} (\msp),\tC)
	\end{equation}
\end{corollary}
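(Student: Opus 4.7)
The plan is to establish Lemma~\ref{l:uniform-C} first (handling small-energy measures via a convexity argument at the level of $\tEn$), and then deduce Corollary~\ref{c:En} from it together with Proposition~\ref{t:contr} by splitting on whether $\En_{\alpha,\eps}(\msp)$ is above or below a suitable threshold.

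For Lemma~\ref{l:uniform-C}, the starting point is that the map $\msp\mapsto \dens_{\alpha,\eps}[\msp]$ is linear, so $\dens_{\alpha,\eps}[\mgr*\msp]=\int \dens_{\alpha,\eps}[f_*\msp]\,d\mgr(f)$ pointwise on $M$. Jensen's inequality applied to the squared $L^2$-norm (the definition~\eqref{eq:tEn}) then gives
\[
\tEn_{\alpha,\eps}(\mgr*\msp)\;\le\;\int \tEn_{\alpha,\eps}(f_*\msp)\,d\mgr(f).
\]
I would next bound each $\tEn_{\alpha,\eps}(f_*\msp)$ by combining Remark~\ref{r:p-4.1} (take e.g.\ $\delta=1$ to get a constant $C_\delta$ with $\tEn\le(1+\delta)\En+C_\delta$) and Proposition~\ref{prop:enChangeOneStep}:
\[
\tEn_{\alpha,\eps}(f_*\msp)\;\le\;(1+\delta)\,\fL(f)^{\alpha}\En_{\alpha,\eps}(\msp)+C_\delta\;\le\;(1+\delta)\,C\,\fL(f)^{\alpha}+C_\delta.
\]
Since $\fL(f)\ge 1$ and $\alpha<\gamma$ one has $\fL(f)^{\alpha}\le \fL(f)^{\gamma}$, so $\int\fL(f)^{\alpha}\,d\mgr<\infty$ by the finite moment condition~\eqref{PosMomentCond}. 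This produces an $\eps$-uniform bound on $\tEn_{\alpha,\eps}(\mgr*\msp)$; a final application of Remark~\ref{r:p-4.1} in the opposite direction converts it into the required $\En_{\alpha,\eps}(\mgr*\msp)\le C'$.

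For Corollary~\ref{c:En}, I would let $(\alpha_0,C_P,\lambda_0)$ be the constants supplied by Proposition~\ref{t:contr}, set $\alpha:=\min(\alpha_0,\gamma/2)$, and choose $\delta>0$ small enough that $(1+\delta)^2\lambda_0<1$; let $C_\delta$ be the corresponding (uniform in $\eps$) constant from Proposition~\ref{prop:EnEquiv}. If $\En_{\alpha,\eps}(\msp)\le A$ for a threshold $A$ to be fixed below, Lemma~\ref{l:uniform-C} directly bounds $\En_{\alpha,\eps}(\mgr*\msp)$ by some $C'$. If instead $\En_{\alpha,\eps}(\msp)>A$, choose $A$ large enough that this forces $\tEn_{\alpha,\eps}(\msp)>C_P$; Proposition~\ref{t:contr} then gives $\tEn_{\alpha,\eps}(\mgr*\msp)<\lambda_0\tEn_{\alpha,\eps}(\msp)$, and two applications of Remark~\ref{r:p-4.1} (one to bound $\tEn(\msp)$ from above by $\En(\msp)$, one to bound $\En(\mgr*\msp)$ from above by $\tEn(\mgr*\msp)$) sandwich this as
\[
\En_{\alpha,\eps}(\mgr*\msp)\;\le\;(1+\delta)^2\lambda_0\,\En_{\alpha,\eps}(\msp)+C^{\star}.
\]
Enlarging $A$ once more to dominate the additive $C^{\star}$ gives $\En_{\alpha,\eps}(\mgr*\msp)<\lambda\,\En_{\alpha,\eps}(\msp)$ for some $\lambda\in\bigl((1+\delta)^2\lambda_0,\,1\bigr)$, and setting $\tC:=\max(C',\lambda A)$ produces the dichotomy~\eqref{eq:EnContr}.

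The main obstacle I foresee is purely bookkeeping: one must juggle two passes through the $\En\leftrightarrow\tEn$ conversion together with the $\fL(f)^{\alpha}$ factor and the choice of threshold $A$, while keeping every constant independent of $\eps$. The feature that makes this possible is precisely that Proposition~\ref{prop:EnEquiv} holds with $\eps$-uniform constants, allowing us to use the $L^2$-friendly $\tEn$ (which convexifies cleanly under $\mgr*$) and the $\fL$-friendly $\En$ (which behaves well under push-forward by a single diffeomorphism) interchangeably at the cost of only a bounded distortion at each step.
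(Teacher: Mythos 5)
Your proposal is correct and follows essentially the same route as the paper: Lemma~\ref{l:uniform-C} is proved via the convexity bound $\tEn_{\alpha,\eps}(\mgr*\msp)\le\E_{\mgr}\tEn_{\alpha,\eps}(f_*\msp)$ (the paper gets this from the variance identity~\eqref{ContrThmVarFormula} rather than invoking Jensen directly, but it is the same fact), combined with Proposition~\ref{prop:enChangeOneStep}, the integrability of $\fL(f)^{\alpha}$, and the $(\delta,C)$-equivalence of the two energies; the corollary then follows from the same dichotomy on $\En_{\alpha,\eps}(\msp)$ with two passes through Proposition~\ref{prop:EnEquiv} and a slightly enlarged $\lambda$. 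The constant bookkeeping you flag is handled exactly as you describe, with all constants uniform in $\eps$.
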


\begin{proof}[Proof of Lemma~\ref{l:uniform-C}]
Note first that due to Proposition~\ref{prop:EnEquiv}, for any $\alpha$ and $C_1$ there exists $C_2$ such that for any $\eps$ and any measure $\msp$ one has
\[
\tEn_{\alpha,\eps}(\msp)<C_1 \Rightarrow \En_{\alpha,\eps}(\msp)<C_2
\]
and vice versa,
\begin{equation}\label{eq:t-bound}
\En_{\alpha,\eps}(\msp)<C_1 \Rightarrow \tEn_{\alpha,\eps}(\msp)<C_2.
\end{equation}
In other words, having an upper bound on one of the energies $\En_{\alpha,\eps}(\msp)$, $\tEn_{\alpha,\eps}(\msp)$ implies a bound for the other one.

Next, the finite moment condition implies the finiteness of the expectation
\begin{equation}\label{eq:I-exp}
C_I:= \E_{\mgr}\fL(f)^{\alpha} < \infty.
\end{equation}
Now,~\eqref{ContrThmVarFormula} implies that for any measure $\msp$ we have
\[
\tEn_{\alpha,\eps}(\mgr*\msp) \le \E_{\mgr} \tEn_{\alpha,\eps}(f_* \msp);
\]
At the same time
Proposition~\ref{prop:EnEquiv} for $\delta=\frac{1}{2}$
 implies that for some constant $C_1$ one has for any measure $\msp'$
\[
\tEn_{\alpha,\eps}(\msp') \le \max(2\En_{\alpha,\eps}(\msp'), C_1 ).
\]
Applying this for $\msp'=f_* \msp$ and joining it with Proposition~\ref{prop:enChangeOneStep}, we get
\[
\tEn_{\alpha,\eps}(f_*\msp) \le \max(2\En_{\alpha,\eps}(f_*\msp), C_1 ) \le 2\fL(f)^{\alpha} \En_{\alpha,\eps}(\msp)+ C_1.
\]
Taking the expectation w.r.t.~$\mgr$ and using~\eqref{eq:I-exp}, we finally get a uniform bound
\[
\tEn_{\alpha,\eps}(\mgr*\msp) \le 2C_I C+C_1 =: C'.
\]
\end{proof}

\begin{proof}[Proof of Corollary~\ref{c:En}]
In the proof of Proposition~\ref{t:contr} we can take $\alpha$ arbitrarily small, so we can assume that $\alpha<\gamma$.
Applying Proposition~\ref{prop:EnEquiv} to both sides of~\eqref{EnContr}, where we take $\delta$ sufficiently small so that $\frac{1-\delta}{1+\delta}>\lambda$, allows to conclude that there exists some constant $C_3$ such that for all $\eps$ and $\msp$
\[
\En_{\alpha, \eps} (\mgr * \msp) < \lambda' \En_{\alpha, \eps} ( \msp) \quad \text{ if } \, \En_{\alpha, \eps} ( \msp) > C_3,
\]
where $\lambda'=\frac{1+\delta}{1-\delta} \lambda <1$.
Meanwhile, Lemma~\ref{l:uniform-C} implies that there exists some $C_4$ such that
\[
\En_{\alpha, \eps} (\mgr * \msp) <C_4 \quad \text{ if } \, \En_{\alpha, \eps} ( \msp) \le C_3.
\]
Taking $\tC:=\max(C_3,C_4)$, we get the desired~\eqref{eq:EnContr}.
\end{proof}

\subsection{H\"older bounds: proofs of Theorems~\ref{t:main-1}--\ref{t:main-3}}\label{s:Holder}

  Now we are ready to complete the proof of Theorem~\ref{t:main-2} (and thus of Theorem~\ref{t:main-1}).

\begin{proof}[Proof of Theorem~\ref{t:main-2}]

Let $\alpha,\tC$ be as in Corollary~\ref{c:En}: for any $\eps$ and for any measure $\msp$ one has
\[
		\En_{\alpha, \eps} (\mgr * \msp) < \max(\lambda \En_{\alpha, \eps} (\msp),\tC);
\]
applying this $n$ times, we get
\[
		\En_{\alpha, \eps} (\mgr^n * \msp) < \max (\lambda^n \En_{\alpha, \eps} (\msp),\tC).
\]
Recall that Lemma~\ref{l:finite} gives a uniform upper bound $\En_{\alpha, \eps} (\msp)\le C_{\alpha}'' \eps^{-\alpha}$, where $C_{\alpha}''=\frac{L}{\alpha}$; choose $\eps:=\lambda^{\frac{n}{\alpha}}$, then we have
	\begin{equation*}
		\En_{\alpha, \eps} (\mgr^n * \msp) < \max \left( 2C, C_{\alpha}'' \right).
	\end{equation*}
Now, applying Lemma~\ref{l:holdFromEn} for any $r > \eps = \kappa^n$, where $\kappa:=\lambda^{\frac{1}{\alpha}}$, we deduce that
	\begin{equation*}
		(\mgr^n * \msp) (B_r(x)) < \sqrt{\frac{2^{\alpha} \max \left( 2C, C_{\alpha}'' \right)}{c_{\alpha}'}} \cdot r^{\frac{\alpha}{2}},
	\end{equation*}
	which completes the proof of Theorem \ref{t:main-2}.

\end{proof}

    Arguing in the same way as in the proof of Theorem~\ref{t:main-2}, 
    Theorem~\ref{t:main-3} can be deduced from Proposition~\ref{rem:uniformContr}.

\begin{remark}
    The same proof works for the setting mentioned in Remark \ref{rem:boundary}. One has to consider an embedding $M \hookrightarrow \tilde{M}$ into a closed manifold $\tilde{M}$, to define energies $\En_{\alpha, \eps}$ and $\tEn_{\alpha, \eps}$ with respect to the $\Leb$ measure on $\tilde{M}$ and to repeat 
    the original proof verbatim.
\end{remark}


\section{Proofs of used properties}\label{s.tip}

Here we provide the proofs of the technical statements that were formulated and used above, as described in Section \ref{ss.structure}.

\subsection{Properties of $\varphi_{\alpha,\eps}(r)$ and $U_{\alpha,\eps}(r)$}\label{s:properties}

Here we prove Proposition \ref{p:properties}.
Let us start with some scaling properties of~$\varphi_{\alpha,\eps}$ and of~$U_{\alpha,\eps}$:

%
\begin{lemma}
    For any $\lambda >0$ and any $\alpha,\eps>0$ we have
    \begin{equation} \label{eq:phi-scaling}
        \forall r>0 \quad \varphi_{\alpha,\lambda\eps}(\lambda r)=\lambda^{-\frac{k}{2}-\frac{\alpha}{2}} \varphi_{\alpha, \eps}(r)
    \end{equation}
    and
    \begin{equation} \label{eq:f-scaling}
        \forall r>0 \quad U_{\alpha,\lambda \eps}(\lambda r)=\lambda^{-\alpha} U_{\alpha,\eps}(r).
    \end{equation}
    In particular, one has
    \begin{equation} \label{eq:scalings}
        U_{\alpha,\eps}(r) = r^{-\alpha} U_{\alpha,\frac{\eps}{r}}(1) = \eps^{-\alpha} U_{\alpha,1}(r/\eps).
    \end{equation}
\end{lemma}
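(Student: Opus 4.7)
The plan is to verify all three identities by direct computation from the definitions. None of the steps is expected to be a real obstacle; this lemma is essentially a dimensional analysis check on the homogeneity built into the definitions of $\varphi_{\alpha,\eps}$ and $U_{\alpha,\eps}$.

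First I would prove the scaling identity \eqref{eq:phi-scaling} for $\varphi_{\alpha,\eps}$ by splitting into the two cases of Definition~\ref{d:phi-U}. If $r\ge \eps$, then $\lambda r \ge \lambda \eps$, so
\[
\varphi_{\alpha,\lambda\eps}(\lambda r)=(\lambda r)^{-\frac{k+\alpha}{2}}=\lambda^{-\frac{k+\alpha}{2}} r^{-\frac{k+\alpha}{2}}=\lambda^{-\frac{k+\alpha}{2}}\varphi_{\alpha,\eps}(r).
\]
If $r<\eps$, then $\lambda r < \lambda \eps$, and
\[
\varphi_{\alpha,\lambda\eps}(\lambda r)=\frac{(\lambda r)^{-\frac{k-\alpha}{2}}}{(\lambda \eps)^{\alpha}}=\lambda^{-\alpha-\frac{k-\alpha}{2}}\cdot \frac{r^{-\frac{k-\alpha}{2}}}{\eps^{\alpha}} = \lambda^{-\frac{k+\alpha}{2}}\varphi_{\alpha,\eps}(r),
\]
so the two cases merge into the claimed identity.

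Next I would derive \eqref{eq:f-scaling} from \eqref{eq:phi-scaling} by starting from the definition of $U_{\alpha,\eps}$ via the integral $I_{\alpha,\eps}$. Fix $x,z\in\R^k$ with $|x-z|=r$; then $|\lambda x-\lambda z|=\lambda r$, so
\[
U_{\alpha,\lambda \eps}(\lambda r)=\int_{\R^k}\varphi_{\alpha,\lambda\eps}(|y-\lambda x|)\,\varphi_{\alpha,\lambda\eps}(|y-\lambda z|)\,d\Leb(y).
\]
Performing the substitution $y=\lambda u$ (so $d\Leb(y)=\lambda^{k}d\Leb(u)$ and $|y-\lambda x|=\lambda|u-x|$, similarly for $z$) and applying~\eqref{eq:phi-scaling} to each of the two factors gives a combined scaling factor
\[
\lambda^{k}\cdot\lambda^{-\frac{k+\alpha}{2}}\cdot\lambda^{-\frac{k+\alpha}{2}}=\lambda^{-\alpha},
\]
so that
\[
U_{\alpha,\lambda\eps}(\lambda r)=\lambda^{-\alpha}\int_{\R^k}\varphi_{\alpha,\eps}(|u-x|)\,\varphi_{\alpha,\eps}(|u-z|)\,d\Leb(u)=\lambda^{-\alpha}U_{\alpha,\eps}(r).
\]

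Finally, the two special cases in~\eqref{eq:scalings} come from specializing~\eqref{eq:f-scaling}: taking $\lambda=1/r$ yields $U_{\alpha,\eps/r}(1)=r^{\alpha}U_{\alpha,\eps}(r)$, i.e.\ $U_{\alpha,\eps}(r)=r^{-\alpha}U_{\alpha,\eps/r}(1)$, while taking $\lambda=1/\eps$ yields $U_{\alpha,1}(r/\eps)=\eps^{\alpha}U_{\alpha,\eps}(r)$, i.e.\ $U_{\alpha,\eps}(r)=\eps^{-\alpha}U_{\alpha,1}(r/\eps)$. The only thing to double-check is that the $\lambda^{k}$ from the Jacobian exactly balances the two factors of $\lambda^{-(k+\alpha)/2}$; this is the arithmetic that justifies the specific exponent $-\frac{k+\alpha}{2}$ chosen in Definition~\ref{d:phi-U}, and it is precisely what makes $U_{\alpha,\eps}$ behave like an $\alpha$-homogeneous kernel.
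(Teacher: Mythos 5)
Your proof is correct and follows the same route as the paper: a case check on the definition of $\varphi_{\alpha,\eps}$ for \eqref{eq:phi-scaling}, the substitution $y=\lambda u$ in the defining integral for \eqref{eq:f-scaling}, and specialization of $\lambda$ for \eqref{eq:scalings}. You simply carry out explicitly the computations the paper leaves to the reader.
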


\begin{proof}
Relation~\eqref{eq:phi-scaling} follows directly from~\eqref{eq:def-varphi} in the definition of~$\varphi_{\alpha,\eps}$.
    For~\eqref{eq:f-scaling}, it suffices to make a change of variable~$y=\lambda y'$ in the integral~\eqref{eq:I}. Finally,~\eqref{eq:scalings} immediately follows from~\eqref{eq:f-scaling}
\end{proof}

\begin{corollary} \label{fHoldCor}
    For any $\alpha>0$ and any $r>\eps>0$ one has
    \[
        U_{\alpha,\eps}(r) \ge c_{\alpha}' r^{-\alpha},
    \]
    where $c_{\alpha}':=U_{\alpha,1}(1)$.
\end{corollary}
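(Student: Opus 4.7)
The plan is to combine the scaling identity~\eqref{eq:scalings} with a monotonicity property of $U_{\alpha,\eps}(1)$ in the cut-off parameter~$\eps$. More precisely, the first equality in~\eqref{eq:scalings} reads
\[
U_{\alpha,\eps}(r) = r^{-\alpha}\, U_{\alpha,\eps/r}(1),
\]
so it suffices to show that $U_{\alpha,\eps'}(1) \ge U_{\alpha,1}(1) = c_\alpha'$ whenever $\eps' \le 1$; the hypothesis $r>\eps$ means exactly that $\eps' := \eps/r < 1$, so applying such a monotonicity bound immediately yields $U_{\alpha,\eps}(r) \ge c_\alpha' r^{-\alpha}$.

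The key step is therefore to establish that $\eps \mapsto U_{\alpha,\eps}(1)$ is non-increasing. This in turn reduces, via the integral representation
\[
U_{\alpha,\eps}(x,z) = \int_{\mathbb{R}^k} \varphi_{\alpha,\eps}(|y-x|)\,\varphi_{\alpha,\eps}(|y-z|)\, d\mathrm{Leb}(y),
\]
to showing that for each fixed $s>0$ the function $\eps \mapsto \varphi_{\alpha,\eps}(s)$ is non-increasing. Directly from Definition~\ref{d:phi-U}: when $\eps \le s$ we have $\varphi_{\alpha,\eps}(s) = s^{-(k+\alpha)/2}$, which is independent of~$\eps$, and when $\eps > s$ we have $\varphi_{\alpha,\eps}(s) = \eps^{-\alpha} s^{-(k-\alpha)/2}$, which is strictly decreasing in~$\eps$; the two formulas agree at $\eps = s$. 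Hence $\varphi_{\alpha,\eps}(s)$ is a non-increasing function of~$\eps$ for every fixed~$s>0$.

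Since the product of two non-increasing non-negative functions is non-increasing, the integrand in the definition of $U_{\alpha,\eps}(1)$ is pointwise non-increasing in~$\eps$, and therefore so is the integral. Combining this with the scaling identity above completes the proof. No genuine obstacle is expected: this is a direct computation once the scaling relation~\eqref{eq:scalings} and the definition of $\varphi_{\alpha,\eps}$ are in hand, and the only thing to keep in mind is that the monotonicity of $U_{\alpha,\eps}(1)$ in~$\eps$ here is with respect to the cut-off parameter, not the distance (the latter monotonicity is the subject of a separate lemma in the same section).
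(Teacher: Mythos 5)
Your proof is correct and follows exactly the paper's own argument: the paper likewise deduces the bound from the scaling identity~\eqref{eq:scalings} together with the observation that $\varphi_{\alpha,\eps}$, and hence $U_{\alpha,\eps}$, is non-increasing in the cut-off parameter~$\eps$. You have merely filled in the (easy) verification of that monotonicity, which the paper states without proof.
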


\begin{proof}
    As a function of $\eps$, the function $\varphi_{\alpha,\eps}(r)$ and hence $U_{\alpha,\eps}(r)$ is decreasing. The conclusion thus directly follows from~\eqref{eq:scalings}.
\end{proof}

\begin{proposition} \label{prop:phifReg}
    \begin{equation} \label{phiReg}
        \forall \alpha,\eps > 0 \quad \forall r' > r > 0 \quad (r' / r)^{-\frac{k}{2} - \frac{\alpha}{2}} \le \frac{\varphi_{\alpha,\eps}(r')}{\varphi_{\alpha,\eps}(r)} \le (r' / r)^{-\frac{k}{2} + \frac{\alpha}{2}}.
    \end{equation}
    \begin{equation}\label{UReg}
        \forall \alpha,\eps>0 \quad \forall r'>r>0 \quad (r'/r)^{-\alpha}\le \frac{U_{\alpha,\eps}(r')}{U_{\alpha,\eps}(r)} \le (r'/r)^{\alpha}.
    \end{equation}
\end{proposition}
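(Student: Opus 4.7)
The plan is to prove \eqref{phiReg} by a short direct argument, then deduce \eqref{UReg} from it by combining the scaling identity \eqref{eq:f-scaling} with a pointwise monotonicity-in-$\eps$ comparison of the functions $\varphi_{\alpha,\eps}$. For \eqref{phiReg}, I would write
\[
\log \varphi_{\alpha,\eps}(r) \;=\; -\tfrac{k}{2}\log r \;+\; q(r),
\]
where $q(r) = -\tfrac{\alpha}{2}\log r$ on $r\ge\eps$ and $q(r) = -\alpha\log\eps + \tfrac{\alpha}{2}\log r$ on $r<\eps$. The function $q$ is continuous across $r=\eps$ and piecewise affine in $\log r$ with slopes $\pm\tfrac{\alpha}{2}$, so $|q(r')-q(r)|\le \tfrac{\alpha}{2}|\log(r'/r)|$; combining this with the common linear term $-\tfrac{k}{2}\log r$ and exponentiating yields exactly \eqref{phiReg}. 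Equivalently, one can split into the three cases $r,r'\ge\eps$; $r,r'<\eps$; and $r<\eps\le r'$, and verify the ratio bound directly in each one.

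For \eqref{UReg}, set $\lambda := r'/r > 1$. The scaling identity \eqref{eq:f-scaling} gives
\[
U_{\alpha,\lambda\eps}(r') \;=\; U_{\alpha,\lambda\eps}(\lambda r) \;=\; \lambda^{-\alpha}\, U_{\alpha,\eps}(r),
\]
hence
\[
\frac{U_{\alpha,\eps}(r')}{U_{\alpha,\eps}(r)}
\;=\;
\lambda^{-\alpha}\cdot \frac{U_{\alpha,\eps}(r')}{U_{\alpha,\lambda\eps}(r')},
\]
and it remains to show that the second factor lies in $[1,\lambda^{2\alpha}]$. This I would get from the pointwise comparison
\[
\lambda^{-\alpha}\,\varphi_{\alpha,\eps}(r) \;\le\; \varphi_{\alpha,\lambda\eps}(r) \;\le\; \varphi_{\alpha,\eps}(r),
\]
valid for every $\lambda\ge 1$ and every $r>0$, which is a short check split over $r\ge\lambda\eps$, $\eps\le r<\lambda\eps$ and $r<\eps$ (equality holds in the upper bound on the first region and in the lower bound on the third). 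Plugging this into both factors $\varphi(|y-x|)$ and $\varphi(|y-z|)$ in the integral \eqref{eq:I} and integrating yields
\[
\lambda^{-2\alpha}\,U_{\alpha,\eps}(r') \;\le\; U_{\alpha,\lambda\eps}(r') \;\le\; U_{\alpha,\eps}(r'),
\]
which is exactly the missing two-sided bound on the remaining ratio.

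The main obstacle, such as it is, is pure bookkeeping in these case analyses: one organizes the regions according to where $r,r'$ and $\eps,\lambda\eps$ fall relative to one another. The conceptual point behind both bounds is that $\varphi_{\alpha,\eps}$ and $U_{\alpha,\eps}$ interpolate between two neighbouring power laws across the cut-off scale $\eps$, so in logarithmic coordinates they are piecewise affine with slope deviating from the central value $-k/2$ (respectively $0$, after factoring out the $r^{-\alpha}$ scaling of $U$) by at most $\alpha/2$ (respectively $\alpha$).
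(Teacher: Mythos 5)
Your proof is correct. The argument for \eqref{phiReg} is essentially the paper's: on each side of the cut-off $\varphi_{\alpha,\eps}$ is a pure power $\const\cdot r^{-\frac{k}{2}\pm\frac{\alpha}{2}}$, and your log-coordinate/piecewise-affine phrasing is just a compact way of handling the case $r<\eps\le r'$ that the paper treats by multiplying the two one-sided estimates. For \eqref{UReg} your route is a mild but genuine variant: the paper substitutes $y=\lambda y'$ with $\eps$ held fixed and then applies \eqref{phiReg} pointwise to each factor $\varphi_{\alpha,\eps}(\lambda\,\cdot\,)/\varphi_{\alpha,\eps}(\cdot)$ inside the integral \eqref{eq:I}, while you first use the exact scaling identity \eqref{eq:f-scaling} (which transfers the rescaling onto the cut-off parameter) and then pay the $\lambda^{\pm\alpha}$ cost through the pointwise monotonicity comparison $\lambda^{-\alpha}\varphi_{\alpha,\eps}\le\varphi_{\alpha,\lambda\eps}\le\varphi_{\alpha,\eps}$; both are correct reshufflings of the same scaling computation, with the error of $\alpha$ per factor of $\varphi$ accounted for either in the argument $r$ or in the parameter $\eps$, and there is no circularity since \eqref{eq:f-scaling} is proved independently of the proposition.
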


\begin{proof}
Note first that, on each of the intervals $(0,\eps]$ and $[\eps,\infty)$ the function $\varphi_{\alpha,\eps}$ takes the form $\const \cdot r^{-\frac{k}{2}} r^{\pm \frac{\alpha}{2}}$, thus implying the first statement on each of these intervals. Now, to handle the case $r'>\eps>r$, it suffices to multiply the obtained estimates for $(r,\eps)$ and for $(\eps,r')$.
%

%
%


To establish the second conclusion, set $\lambda:=\frac{r}{r'}$, $\lambda<1$.
Take two points $x',z'$ at the distance $r'$ from each other, then the points $x=\lambda x', \, z =\lambda z'$ are at distance $r$. Let us make a change of variable $y=\lambda y'$ in~\eqref{eq:I} for $I_{\alpha,\eps}(x,z)$:
\begin{multline*}
U_{\alpha,\eps}(r)=I_{\alpha,\eps}(x,z) =
\\
= \int_{\R^k} \varphi_{\alpha,\eps}(|y-x|) \varphi_{\alpha,\eps}(|y-z|) \, d\Leb(y) =
\\
= \int_{\R^k} \varphi_{\alpha,\eps}(|\lambda y'-\lambda x'|) \varphi_{\alpha,\eps}(|\lambda y'-\lambda z'|) \, d\Leb(\lambda y')
\\ = \lambda^k \int_{\R^k} \varphi_{\alpha,\eps}(\lambda |y'-x'|) \varphi_{\alpha,\eps}(\lambda |y'-z'|) \, d\Leb(y')
\end{multline*}
Now, due to~\eqref{phiReg},
\[
\lambda^{-(k-\alpha)}\le \frac{\varphi_{\alpha,\eps}(\lambda |y'-x'|) \varphi_{\alpha,\eps}(\lambda |y'-z'|)}{\varphi_{\alpha,\eps}(|y'-x'|) \varphi_{\alpha,\eps}(|y'-z'|)} \le \lambda^{-(k+\alpha)}
\]
Hence, $U_{\alpha,\eps}(r)$ differs from
\[
\lambda^k \int_{\R^k} \varphi_{\alpha,\eps}(|y'-x'|) \varphi_{\alpha,\eps}(|y'-z'|) \, d\Leb(y') = \lambda^k U(r')
\]
by a factor that is between $\lambda^{-(k-\alpha)}$ and $\lambda^{-(k+\alpha)}$. As $\lambda^k$ cancels out, we get the desired
\[
\lambda^{\alpha} U(r') \le U(r) \le \lambda^{-\alpha} U(r').
\]

%
%
%
%
%
%
\end{proof}


A first immediate remark is the finiteness of $U$:
\begin{lemma}\label{l:U-finite}
For any $\alpha\in(0,\frac{1}{2})$, $\eps>0$, and any $r\ge 0$ the value $U_{\alpha,\eps}(r)$ is finite, and
\begin{equation}
\forall r\ge 0 \quad U_{\alpha,\eps}(r) \le \langle \varphi_{\alpha,\eps},\varphi_{\alpha,\eps}\rangle_{L_2(\R^k)} = U_{\alpha,\eps}(0).
\end{equation}
\end{lemma}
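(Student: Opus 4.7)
The plan is to view $U_{\alpha,\eps}(r)$ as a Lebesgue $L_2$-inner product of two translates of the radial kernel $\varphi_{\alpha,\eps}$, and then apply the Cauchy--Schwarz inequality. Concretely, fix any two points $x,z\in\R^k$ with $|x-z|=r$; then the definition~\eqref{eq:I} rewrites as
\[
U_{\alpha,\eps}(r) \;=\; I_{\alpha,\eps}(x,z) \;=\; \bigl\langle \varphi_{\alpha,\eps}(|\cdot-x|),\, \varphi_{\alpha,\eps}(|\cdot-z|)\bigr\rangle_{L_2(\R^k,\Leb)}.
\]
Because Lebesgue measure on $\R^k$ is translation invariant, both translated functions share the same $L_2$-norm, equal to $\|\varphi_{\alpha,\eps}\|_{L_2(\R^k)}$. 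The Cauchy--Schwarz inequality therefore gives $U_{\alpha,\eps}(r)\le \|\varphi_{\alpha,\eps}\|_{L_2(\R^k)}^2$.

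On the other hand, specializing $x=z$ in \eqref{eq:I} identifies
\[
U_{\alpha,\eps}(0) \;=\; \int_{\R^k}\varphi_{\alpha,\eps}(|y|)^2\,d\Leb(y) \;=\; \|\varphi_{\alpha,\eps}\|_{L_2(\R^k)}^2,
\]
and this is exactly the middle expression in the statement of the lemma. Combining the two observations yields simultaneously the equality $\langle\varphi_{\alpha,\eps},\varphi_{\alpha,\eps}\rangle_{L_2(\R^k)}=U_{\alpha,\eps}(0)$ and the inequality $U_{\alpha,\eps}(r)\le U_{\alpha,\eps}(0)$.

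It only remains to verify that the quantity $\|\varphi_{\alpha,\eps}\|_{L_2(\R^k)}^2$ is finite; but this is precisely the integral computed in Lemma~\ref{l:finite}, where splitting into the regions $|y|<\eps$ and $|y|>\eps$ and passing to spherical coordinates reduces matters to the integrability of $r^{\alpha-1}$ near $0$ and of $r^{-\alpha-1}$ near $\infty$. Both integrals converge exactly because $\alpha>0$, and Lemma~\ref{l:finite} even gives the explicit value $\tfrac{L}{\alpha}\eps^{-\alpha}$. I note in passing that the assumption $\alpha<\tfrac{1}{2}$ from Proposition~\ref{p:properties} plays no role here: only positivity of $\alpha$ is used. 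I do not foresee any obstacle in this argument; the lemma is essentially a one-line consequence of Cauchy--Schwarz once Lemma~\ref{l:finite} is in place.
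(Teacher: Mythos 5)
Your proposal is correct and follows essentially the same route as the paper: both identify $U_{\alpha,\eps}(r)$ as the $L_2(\R^k)$ inner product of two translates of $\varphi_{\alpha,\eps}$, check that $\varphi_{\alpha,\eps}\in L_2(\R^k)$ (integrability of $r^{\alpha-1}$ at $0$ and $r^{-\alpha-1}$ at infinity, i.e.\ only $\alpha>0$ is needed), and conclude by Cauchy--Schwarz that $U_{\alpha,\eps}(r)\le \|\varphi_{\alpha,\eps}\|_{L_2}^2=U_{\alpha,\eps}(0)$.
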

\begin{proof}
Note that the $\varphi_{\alpha,\eps}\in L_2(\R^k)$. Indeed, the inequality $2\cdot \frac{k-\alpha}{2}<k$ ensures squared integrability at the origin, and as $2\cdot \frac{k+\alpha}{2}>k$, the integral converges at infinity, too.

Now, $U(r)$ is a scalar product between two $r$-shifted copies of $\varphi_{\alpha,\eps}$. Hence, it is finite for all $r\ge 0$, and we get the desired~\eqref{eq:upper}.
\end{proof}

The next lemma explains that the interaction potential $U_{\alpha,\eps}(r)$ can be considered as a cut-off of $r^{-\alpha}$ (or, more precisely, of $c_{\alpha} r^{-\alpha}$):
\begin{lemma}\label{l:lim-U}
For any $\alpha\in (0,\frac{1}{2})$ the function $U_{\alpha,\eps}(r)$ is decreasing in~$\eps$, and one has
\begin{equation}\label{eq:lim-U}
\forall r>0 \quad \lim_{\eps\to 0} U_{\alpha,\eps}(r) = c_{\alpha} r^{-\alpha} 
\end{equation}
where
\[
c_{\alpha}:= \int_{\R^k} |y|^{-\frac{k+\alpha}{2}} |y-(1,0,\dots,0)|^{-\frac{k+\alpha}{2}} \, d\Leb(y)
\]
is a constant depending only on $\alpha$.
\end{lemma}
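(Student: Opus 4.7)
Proof plan for Lemma~\ref{l:lim-U}.

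My plan is to handle the two claims (monotonicity in $\eps$ and the limit as $\eps\to 0$) essentially independently, using the explicit piecewise formula for $\varphi_{\alpha,\eps}$ and then passing to the integral.

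For the monotonicity, I would first prove the pointwise statement that for every fixed $r>0$ the map $\eps\mapsto \varphi_{\alpha,\eps}(r)$ is non-increasing. This reduces to inspecting the two cases in~\eqref{eq:def-varphi}: when $\eps\le r$ the value is $r^{-(k+\alpha)/2}$ and does not depend on $\eps$, while for $\eps>r$ the value is $\eps^{-\alpha}r^{-(k-\alpha)/2}$, which decreases in $\eps$, and at the joining point $\eps=r$ both expressions agree. Since $\varphi_{\alpha,\eps}(\cdot)\ge 0$ and $\varphi_{\alpha,\eps_1}(s)\le \varphi_{\alpha,\eps_2}(s)$ pointwise whenever $\eps_1\ge \eps_2$, plugging this into~\eqref{eq:I} (with $x,z\in\R^k$ at distance $r$) immediately gives $U_{\alpha,\eps_1}(r)\le U_{\alpha,\eps_2}(r)$, so $U_{\alpha,\eps}(r)$ is non-increasing in $\eps$.

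For the limit, I would use the scaling identity~\eqref{eq:scalings}, namely $U_{\alpha,\eps}(r) = r^{-\alpha}\,U_{\alpha,\eps/r}(1)$, to reduce to the case $r=1$: it suffices to show that $\lim_{\eps\to 0} U_{\alpha,\eps}(1) = c_\alpha$. Fix $x=0$ and $z=(1,0,\dots,0)$, so that
\[
U_{\alpha,\eps}(1) = \int_{\R^k} \varphi_{\alpha,\eps}(|y|)\,\varphi_{\alpha,\eps}(|y-z|)\,d\Leb(y).
\]
From the pointwise formula, for every $y\notin\{0,z\}$ one has $\varphi_{\alpha,\eps}(|y|)\to |y|^{-(k+\alpha)/2}$ and $\varphi_{\alpha,\eps}(|y-z|)\to |y-z|^{-(k+\alpha)/2}$ as $\eps\to 0$, so the integrand converges almost everywhere to the kernel defining $c_\alpha$. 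Moreover, by the monotonicity in $\eps$ already established at the level of $\varphi_{\alpha,\eps}$, the integrand is monotonically non-decreasing as $\eps\to 0$, so the monotone convergence theorem gives exactly $\lim_{\eps\to 0} U_{\alpha,\eps}(1) = c_\alpha$.

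The only thing that still requires justification is that $c_\alpha<\infty$, i.e.\ that the limit integral is finite; otherwise the claimed equality would only say ``$+\infty=+\infty$'' and one would like a genuine limit. This is the (very mild) obstacle. I would verify integrability directly: near $y=0$ the integrand behaves like $|y|^{-(k+\alpha)/2}$ and since $(k+\alpha)/2 < k$ (using $\alpha<1/2\le k$), it is locally integrable there; the singularity at $y=z$ is symmetric; and near infinity the integrand decays like $|y|^{-(k+\alpha)}$, which is integrable since $k+\alpha>k$. Combined with the monotone convergence step above, this yields the lemma.
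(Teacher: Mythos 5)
Your argument follows essentially the same route as the paper: monotonicity of $\varphi_{\alpha,\eps}$ in $\eps$ gives monotonicity of $U_{\alpha,\eps}(r)$, one passes to the limit via monotone convergence, and the scaling identity identifies the limit as $c_\alpha r^{-\alpha}$. Your extra care in verifying $c_\alpha<\infty$ by checking the three potential singularities (at $0$, at $z$, and at infinity) is a reasonable addition that the paper leaves implicit, but it does not change the underlying approach.
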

\begin{proof}
The monotonicity part immediately comes from the monotonicity of $\varphi_{\alpha,\eps}$ in $\eps$. As the functions under the integral in~\eqref{eq:I} are positive and monotonous in $\eps$, one can pass to the limit as $\eps\to 0$, obtaining the convolution square of~$r^{-\frac{k+\alpha}{2}}$ in $\R^k$. The latter equals $c_\alpha r^{-\alpha}$ due to the scaling reasons, and the constant $c_\alpha$ comes from substituting $r=1$.
\end{proof}

\begin{lemma} \label{lm:fMonotonicity}
    For any $\alpha, \eps > 0$ function $U_{\alpha, \eps} (r)$ is non-increasing in~$r$.
\end{lemma}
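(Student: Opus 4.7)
The plan is to realise $U_{\alpha,\eps}(r)$ as a self-convolution of a radial, non-increasing, non-negative function on $\R^k$ and then run a layer-cake argument that reduces the desired monotonicity in $r$ to the elementary geometric fact that the Lebesgue measure of the intersection of two Euclidean balls is non-increasing in the distance between their centres.

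First I will verify that $\varphi_{\alpha,\eps}$ is itself non-increasing on $(0,\infty)$: on the outer piece $r\ge\eps$ the exponent $-\tfrac{k+\alpha}{2}$ is negative, and on the inner piece $r<\eps$ the exponent $-\tfrac{k-\alpha}{2}$ is also negative since $\alpha<\tfrac{1}{2}\le\tfrac{k}{2}$, and the two expressions agree at $r=\eps$. Consequently $\tilde\varphi(y):=\varphi_{\alpha,\eps}(|y|)$ is a non-negative, radial, non-increasing function on $\R^k$. Using that $\tilde\varphi$ is even and that Lebesgue measure is translation-invariant, the definition \eqref{eq:I} rewrites as
\[
U_{\alpha,\eps}(r) \;=\; \int_{\R^k}\tilde\varphi(y)\,\tilde\varphi(y-w)\,d\Leb(y) \;=\; (\tilde\varphi\ast\tilde\varphi)(w)
\]
for any $w\in\R^k$ with $|w|=r$.

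Next I apply the layer-cake formula $\tilde\varphi(y)=\int_0^\infty \mathbb{1}_{\{\tilde\varphi>t\}}(y)\,dt$. Because $\tilde\varphi$ is radial and non-increasing, each super-level set $\{\tilde\varphi>t\}$ is an open ball $B_{s(t)}(0)$ (possibly empty) centred at the origin. Substituting this into the convolution and exchanging the order of integration (justified by Tonelli for non-negative integrands, together with the a priori finiteness from Lemma \ref{l:U-finite}) yields
\[
U_{\alpha,\eps}(r) \;=\; \int_0^\infty\!\!\int_0^\infty \bigl|B_{s(t_1)}(0)\cap B_{s(t_2)}(w)\bigr|\,dt_1\,dt_2.
\]
For any fixed $s_1,s_2>0$, the intersection volume $|B_{s_1}(0)\cap B_{s_2}(w)|$ equals $(\mathbb{1}_{B_{s_1}}\ast\mathbb{1}_{B_{s_2}})(w)$ and is a radial, non-increasing function of $|w|$ -- a classical elementary fact that follows, for instance, from the Riesz rearrangement inequality. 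Hence the double integrand is non-increasing in $r=|w|$ pointwise in $(t_1,t_2)$, and the conclusion follows by integration.

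The only delicate point is the singularity of $\tilde\varphi$ at the origin, but this is harmless: Lemma \ref{l:U-finite} already guarantees that the whole expression is finite, so Tonelli's theorem applies with no further integrability check. Everything else in the argument is routine once the convolution picture is in place.
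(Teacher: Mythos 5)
Your proof is correct and follows essentially the same route as the paper: both reduce the statement to the monotonicity of the convolution of radial non-increasing functions by decomposing $\varphi_{\alpha,\eps}(|\cdot|)$ into ball indicators (your layer-cake formula is the continuous version of the paper's ``approximate from below by positive combinations of indicators of balls''), and both then invoke the fact that the volume of the intersection of two balls is non-increasing in the distance between their centres. The only difference is that the paper proves this last geometric fact with a short convexity argument, whereas you cite it as classical (note that the Riesz rearrangement inequality by itself only gives the maximum at coincident centres, not monotonicity, so if you want a self-contained justification the convexity argument is the cleaner one).
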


\begin{proof}
Actually, the convolution of any positive, spherically symmetric, radially non-increasing and square integrable functions $\psi_1(r)$, $\psi_2(r)$ in $\R^k$ is again spherically symmetric radially non-increasing function. Indeed, any such function can be approximated from below by a linear combination of indicator functions $\Ind_{B_R(0)}(x)=\Ind_{|x|<R}$ of balls centred at the origin, taken with positive coefficients. It now suffices to establish the statement for such indicator functions, as one can then pass to the (monotonous) limit.

Therefore it is enough to show that the volume of the intersection of two such balls $B_{R_1}(0)$ and $B_{R_2}(x)$ is a non-increasing function of~$|x|$ (see Fig.~\ref{f:B-r}). But that is a partial case of the following simple statement:

\begin{figure}[!h!]
\includegraphics[height=3.5cm]{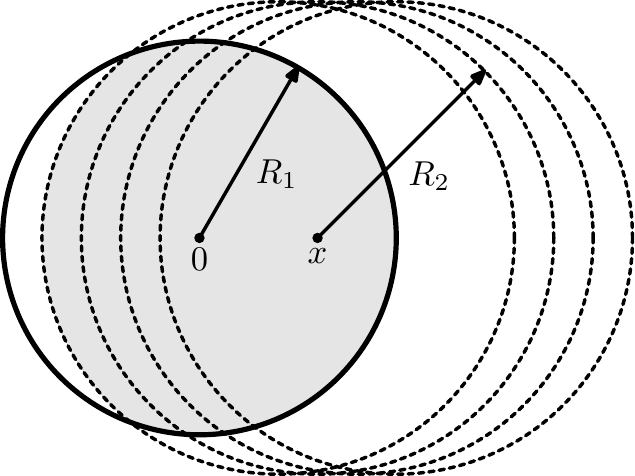}
\caption{Intersections of $B_{R_1}(0)$ with $B_{R_2}(x)$}\label{f:B-r}
\end{figure}

\begin{lemma}\label{l.conv}
Suppose $A\subset \R^k$ is convex, $B\subset A$ is measurable, and $\text{\rm Leb}\,(B)<\infty$. Then for any non-zero vector $\bar v\in \R^k$ the measure  $\text{\rm Leb}\,(A\cap (B+t\bar v))$ as a function of $t\ge 0$ is non-increasing.
\end{lemma}

Indeed, monotonicity in Lemma \ref{l.conv} follows from the fact that due to convexity of $A$, for any point $b\in B$, as soon as $b+t_1\bar v\not \in A$, we must have $b+t\bar v\not \in A$ for all $t>t_1$.

%

\end{proof}

\subsection{Comparing $\En_{\alpha, \eps}(\msp)$ and $\tEn_{\alpha, \eps}(\msp)$: Proof of Proposition \ref{prop:EnEquiv}}\label{s:P41}

We will need the following statement:
\begin{lemma}\label{l:K-U-comp}
For any $\alpha>0$, the interaction potentials $K_{\alpha,\eps}(x,z)$ and $U_{\alpha,\eps}(d(x,z))$ are comparable in the  following sense:
\begin{multline}\label{eq:K-U-close}
\forall \delta>0 \, \exists C: \quad \forall \eps>0, \,\forall x,z\in M \\
K_{\alpha,\eps}(x,z) \approx_{(\delta, C)} U_{\alpha,\eps}(d(x,z)), \ \text{i.e. }\\
\frac{1}{1+\delta} (U_{\alpha,\eps}(d(x,z))-C) < K_{\alpha,\eps}(x,z) < (1+\delta) U_{\alpha,\eps}(d(x,z))+C.
\end{multline}
\end{lemma}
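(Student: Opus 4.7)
The intuition is that on a small scale the manifold $M$ is nearly Euclidean via the exponential chart, so the integral $K_{\alpha,\eps}(x,z)$ over $M$ and the Euclidean integral $U_{\alpha,\eps}(d(x,z))$ agree on their ``local'' parts, where the mass of the integrand is concentrated, while the contributions from regions far from $\{x,z\}$ are uniformly bounded in $\eps$ and can be absorbed into the additive constant $C$. The crucial uniformity in $\eps$ rests on the $\eps$-independent envelope $\varphi_{\alpha,\eps}(r)\le r^{-(k+\alpha)/2}$.

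Given $\delta>0$, the plan is to fix once and for all a scale $r_0=r_0(\delta,\alpha)>0$, smaller than the injectivity radius of $M$ and small enough that for every $x\in M$ the exponential map $\exp_x\colon B_{4r_0}(0)\to M$ is a diffeomorphism with both $(1\pm\eta)$-bi-Lipschitz distortion of distances and $(1\pm\eta)$-distortion of the Jacobian, where $\eta=\eta(r_0)$ satisfies $(1+\eta)^{k+\alpha+1}<1+\delta$. In the \emph{separated case} $d(x,z)\ge r_0$, Proposition \ref{p:properties} gives $U_{\alpha,\eps}(d(x,z))\le \max(c_\alpha,L/\alpha)\,r_0^{-\alpha}$, uniformly in $\eps$; a parallel estimate, splitting the integration in $y$ into $\{d(x,y)<r_0/2\}$, $\{d(z,y)<r_0/2\}$, and the complement, and using the envelope bound to get $\int_{B_r}\varphi_{\alpha,\eps}(|v|)\,dv\le \const\cdot r^{(k-\alpha)/2}$ in the local chart, bounds $K_{\alpha,\eps}(x,z)$ by a constant depending only on $r_0,\alpha,\Leb(M)$. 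In this regime the conclusion holds with $C$ chosen larger than both bounds.

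In the \emph{close case} $d(x,z)<r_0$, I would split $K_{\alpha,\eps}(x,z)=K^{\mathrm{near}}+K^{\mathrm{far}}$ with $K^{\mathrm{near}}$ the integral over $y\in B_{2r_0}(x)$; $K^{\mathrm{far}}$ is bounded uniformly as above. For $K^{\mathrm{near}}$ I would pull the integral back via $\psi:=\exp_x^{-1}$ to $\R^k$: setting $\tilde z=\psi(z)$, the radial isometry of $\exp_x$ gives $|\tilde z|=d(x,z)$, and combining the chart's bi-Lipschitz distortion with Proposition \ref{prop:phifReg} shows that the transplanted integrand differs from $\varphi_{\alpha,\eps}(|v|)\varphi_{\alpha,\eps}(|v-\tilde z|)$ multiplicatively by a factor in $(1\pm\eta)^{k+\alpha+1}\subset(1\pm\delta)$. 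Integrating, $K^{\mathrm{near}}$ is $(1\pm\delta)$-close to $\int_{B_{2r_0}(0)}\varphi_{\alpha,\eps}(|v|)\varphi_{\alpha,\eps}(|v-\tilde z|)\,dv$, up to a uniformly bounded error coming from the boundary annulus $B_{2r_0(1+\eta)}(0)\setminus B_{2r_0(1-\eta)}(0)$. Finally, the tail $\int_{\R^k\setminus B_{2r_0}(0)}\varphi_{\alpha,\eps}(|v|)\varphi_{\alpha,\eps}(|v-\tilde z|)\,dv$ is bounded uniformly in $\eps$: on this region $|v-\tilde z|\ge |v|/2$, so by Proposition \ref{prop:phifReg} the integrand is at most $2^{(k+\alpha)/2}\varphi_{\alpha,\eps}(|v|)^2\le 2^{(k+\alpha)/2}|v|^{-(k+\alpha)}$, which integrates to a constant depending only on $r_0$ and $\alpha$. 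Assembling the three pieces yields $K_{\alpha,\eps}(x,z)\approx_{(\delta,C)}U_{\alpha,\eps}(d(x,z))$.

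The main obstacle will be the bookkeeping: carefully tracking the multiplicative distortions (from the chart itself, its Jacobian, and the regularity of $\varphi_{\alpha,\eps}$) and the additive tail constants through both regimes, and verifying that \emph{all} bounds are uniform in $\eps$. The payoff of using the envelope $\varphi_{\alpha,\eps}(r)\le r^{-(k+\alpha)/2}$ rather than working directly with $\varphi_{\alpha,\eps}$ itself is that the error terms decouple from $\eps$, so once $r_0$ is fixed they depend only on $r_0$, $\alpha$ and $\Leb(M)$.
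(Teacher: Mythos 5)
Your proposal is correct and follows essentially the same route as the paper's proof: split the $y$-integration into a ball around $x$ and its complement, bound the far part (on $M$ and on $\R^k$) uniformly in $\eps$ via the envelope $\varphi_{\alpha,\eps}(r)\le r^{-\frac{k+\alpha}{2}}$, and compare the near part to the Euclidean integral through geodesic coordinates with distortion $(1+\eta)^{k+\alpha+1}<1+\delta$. Your explicit separation of the case $d(x,z)\ge r_0$ (where both $K_{\alpha,\eps}$ and $U_{\alpha,\eps}$ are simply bounded by constants) is a clean way of handling what the paper treats only in passing, but it is not a different argument.
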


Postponing its proof for the moment, note that it implies Proposition~\ref{prop:EnEquiv}.

\begin{proof}[Proof of Proposition~\ref{prop:EnEquiv}]
It suffices to integrate~\eqref{eq:K-U-close} w.r.t. $\msp\times \msp \, (x,z)$. As constant $C$ does not depend on these points nor on the measure $\msp$, one gets
\[
\frac{1}{1+\delta} (\En(\msp)-C) < \tEn(\msp) < (1+\delta) \En(\msp)+C
\]
with the same constant~$C$. As it was noticed in Remark~\ref{r:p-4.1}, this is an equivalent form of Proposition~\ref{prop:EnEquiv}.
\end{proof}

\begin{proof}[Proof of Lemma~\ref{l:K-U-comp}]
Let $\alpha>0$ be given. Take any $r_0>0$ and divide the integral~\eqref{eq:K-a-e} defining $K_{\alpha,\eps}(x,z)$ into two parts, depending on whether the distance $d(x,y)$ exceeds $r_0$:
\begin{multline*}
K_{\alpha,\eps}(x,z) = \int_{B_{r_0}(x)} \varphi_{\alpha,\eps}(d(x,y)) \varphi_{\alpha,\eps}(d(z,y)) \, d\Leb(y) +
\\
+ \int_{M\setminus B_{r_0}(x)} \varphi_{\alpha,\eps}(d(x,y)) \varphi_{\alpha,\eps}(d(z,y)) \, d\Leb(y).
\end{multline*}
Denote the first and the second summands as $K_{\alpha,\eps}^{(r_0)}(x,z)$ and $\bK_{\alpha,\eps}^{(r_0)}(x,z)$.

Note that the second summand is uniformly bounded: indeed, the factor $\varphi_{\alpha,\eps}(d(x,y))$ doesn't exceed a constant $\varphi_{\alpha, \eps}(r_0)\le \varphi_{\alpha}(r_0)$, while the second factor $\varphi_{\alpha,\eps}(d(y,z))$ is a function with the integral on~$M$ that is bounded uniformly in~$z\in M$. The latter uniform bound can be seen by again decomposing the integral in two:
\begin{multline}\label{eq:phi-integral-bound}
\int_M \varphi_{\alpha,\eps}(d(y,z)) \, d\Leb(y) = \int_{B_{r_0}(z)} \varphi_{\alpha,\eps}(d(y,z)) \, d\Leb(y) +
\\ + \int_{M\setminus B_{r_0}(z)} \varphi_{\alpha,\eps}(d(y,z)) \, d\Leb(y).
\end{multline}
The second summand in~\eqref{eq:phi-integral-bound} does not exceed $\vol(M)\cdot \varphi_{\alpha}(r_0)$, as
\[
\varphi_{\alpha}(r)=\lim_{\eps\to +0} \varphi_{\alpha,\eps}(r) = r^{-\frac{k+\alpha}{2}}
\]
is an upper bound for $\varphi_{\alpha,\eps}(r)$ for all $\eps>0$. The first one can be estimated uniformly in~$z\in M$ by passage to the geodesic coordinates centred at~$z$, comparing it to the same integral in a ball in $\R^k$: the Jacobian of the change of variables is (for $r_0$ smaller than the injectivity radius in $M$) uniformly bounded, and the integral of $r^{-\frac{k+\alpha}{2}}$ on $B_{r_0}(0)\subset \R^k$ converges.

We thus have
\begin{equation}\label{eq:C-K-alpha}
\bK_{\alpha,\eps}^{(r_0)}(x,z) \le C^K_{\alpha}(r_0)
\end{equation}
for some constant $C^K_{\alpha}(r_0)$.

Now, let us transform the first integral, $K_{\alpha,\eps}^{(r_0)}(x,z)$. For sufficiently small $r_0>0$ (smaller than the injectivity radius at every point) one can take the geodesic coordinates at any point $x\in M$ in a ball of radius $r_0$. Thus, we can take a point $x'\in \R^k$, let $\psi:B_{r_0}(x')\to B_{r_0}(x)$ be geodesic coordinates, and denote $z':=\psi^{-1}(z)$.

Fix any $\delta_1>0$. Due to the compactness of $M$, for a sufficiently small $r_0$ the Jacobian and the bi-Lipschitz constants of $\psi$ are $\delta_1$-close to~$1$:
\[
\fL(\psi) < 1+\delta_1, \quad \Jac(\psi) \in \left(\frac{1}{1+\delta_1}, 1+\delta_1\right).
\]
Making a change of variables $y=\psi(y')$ in the integral for~$K_{\alpha,\eps}^{(r_0)}(x,z)$, we get
\begin{multline*}
K_{\alpha,\eps}^{(r_0)}(x,z) = \int_{B_{r_0}(x)} \varphi_{\alpha,\eps}(d(x,y)) \varphi_{\alpha,\eps}(d(z,y)) \, d\Leb_M(y)   \\
= \int_{B_{r_0}(x')} \varphi_{\alpha,\eps}(d(\psi(x'),\psi(y'))) \varphi_{\alpha,\eps}(d(\psi(z'),\psi(y'))) \, d\Leb_M(\psi(y'));
\end{multline*}
all the three quotients
\[
\frac{\varphi_{\alpha,\eps}(d(\psi(x'),\psi(y')))}{\varphi_{\alpha,\eps}(|x'-y'|)},
\quad
\frac{\varphi_{\alpha,\eps}(d(\psi(z'),\psi(y')))}{\varphi_{\alpha,\eps}(|z'-y'|)},
\quad
\left. \frac{d\, \Leb_M}{d \, \psi_* \Leb_{\R^k}} \right|_{y}
\]
are close to~$1$: the first two by a factor $(1+\delta_1)^{\frac{k+\alpha}{2}}$ due to Proposition~\ref{prop:phifReg}, and the last one by the factor $(1+\delta_1)$. Thus, $K_{\alpha,\eps}^{(r_0)}(x,z)$ differs from
\begin{equation}\label{eq:I-r_0}
I_{\alpha,\eps}^{(r_0)} (x',z') :=\int_{B_{r_0}(x')} \varphi_{\alpha,\eps}(|x'-y'|) \varphi_{\alpha,\eps}(|z'-y'|) \, d\Leb_{\R^k}(y')
\end{equation}
by the factor at most $(1+\delta_1)^{k+\alpha+1}$. Now, $I_{\alpha,\eps}^{(r_0)} (x',z')$ is also a part of $I_{\alpha,\eps} (x',z')$, that differs from it by
\[
\overline{I}_{\alpha,\eps}^{(r_0)} (x',z') :=\int_{\R^k\setminus B_{r_0}(x')} \varphi_{\alpha,\eps}(|x'-y'|) \varphi_{\alpha,\eps}(|z'-y'|) \, d\Leb_{\R^k}(y'),
\]
that is bounded uniformly in $\eps, x', z'$ (for $|x'-z'|<\frac{r_0}{2}$ due to the convergence of the integral of $r^{-k-\alpha}$ at infinity, for $|x'-z'|\ge \frac{r_0}{2}$ due to Lemma~\ref{l:lim-U}). Thus, first choosing $\delta_1$ so that $(1+\delta_1)^{k+\alpha+2}<1+\delta$ and then accordingly choosing $r_0$, we obtain the desired estimate~\eqref{eq:K-U-close}.

\end{proof}

\subsection{$E_{\mgr} \left[ {\tEn_{\alpha, \eps} (f_* \msp)} \right]$ vs $\tEn_{\alpha, \eps} (\msp)$: Proof of Proposition \ref{lm:tailsEst}}\label{ss.proof417}

\begin{proof}[Proof of Proposition \ref{lm:tailsEst}]

Let us start by establishing that an estimate~(\ref{tailsEst1}) holds with~$\tEn_{\alpha, \eps}$ replaced by~$\En_{\alpha, \eps}$. Indeed, from Proposition~\ref{prop:enChangeOneStep} we know that for any $f \in \Diff^1(M)$ one has
    \begin{equation*}
        \fL(f)^{-\alpha} \le \frac{\En_{\alpha, \eps} (f_* \msp)}{\En_{\alpha, \eps} (\msp)} \le \fL(f)^{\alpha}.
    \end{equation*}
Taking the expectation with respect to measure $\mgr$ we get
    \begin{equation*}
        \frac{\mathbb{E}_{\mgr} \left[ \En_{\alpha, \eps} (f_* \msp) \right]}{\En_{\alpha, \eps} (\msp)} \in \left( \mathbb{E} \left[ \fL(f)^{-\alpha} \right], \mathbb{E} \left[ \fL(f)^{\alpha} \right] \right).
    \end{equation*}
    It remains to notice that by H\"older's inequality we have
    \begin{equation*}
        \mathbb{E} \left[ \fL(f)^{\alpha} \right] \le \left( \mathbb{E} \left[ \fL(f)^{\gamma} \right] \right)^{\alpha/\gamma}
    \end{equation*}
    and by Jensen's inequality we also have
    \begin{equation*}
        \left( \mathbb{E} \left[ \fL(f)^{\alpha} \right] \right)^{-1} \le \mathbb{E} \left[ \fL(f)^{-\alpha} \right].
    \end{equation*}
Hence, taking $C_0$ such that $\E_{\mgr} \fL(f)^{\gamma}  < C_0$, one gets that for any $\alpha<\gamma$,
    \begin{equation*}
        \frac{\mathbb{E}_{\mgr} \left[ \En_{\alpha, \eps} (f_* \msp) \right]}{\En_{\alpha, \eps} (\msp)} \in \left( C_0^{-\frac{\alpha}{\gamma}}, C_0^{\frac{\alpha}{\gamma}} \right).
    \end{equation*}
Since $C_0^{\frac{\alpha}{\gamma}}$ tends to~$1$ as $\alpha$ tends to~$0$, for any $\dn>0$ there exists $\alpha_0$ such that for any $\alpha\in (0,\alpha_0)$ one has
    \begin{equation}\label{eq:E-f-power}
        \frac{\mathbb{E}_{\mgr} \left[ \En_{\alpha, \eps} (f_* \msp) \right]}{\En_{\alpha, \eps} (\msp)} \in \left( \frac{1}{1+\dn}, 1+\dn \right);
    \end{equation}

Let us now return back to the original~\eqref{tailsEst1}. 
Namely,  let $\delta>0$ be given; choose and fix $\dn>0$ such that $(1+\dn)^3<1+\delta$, and let $\alpha_0$ be chosen w.r.t. $\dn$ so that~\eqref{eq:E-f-power} holds.


By Proposition~\ref{prop:EnEquiv},  
 there exists $\Cn>0$ such that for any measure $\msp'$ on $M$ and any $\eps>0$ one has
\begin{equation}\label{eq:nu-arb}
\frac{1}{1+\dn} \, \En_{\alpha,\eps}(\msp') - \frac{\Cn}{1+\dn}  <
\tEn_{\alpha,\eps}(\msp')  < (1+\dn) \En_{\alpha,\eps}(\msp') + \Cn
\end{equation}
Applying this for $\msp'=f_*\msp$ and taking the expectation w.r.t. $\mgr$ provides
\begin{multline*}
\frac{1}{1+\dn} \, \E_{\mu}\left [\En_{\alpha,\eps}(f_*\msp) \right] - \frac{\Cn}{1+\dn}  <
 \E_{\mu}\left [\tEn_{\alpha,\eps}(f_*\msp) \right]
\\
< (1+\dn) \E_{\mu}\left [\En_{\alpha,\eps}(f_*\msp) \right] + \Cn;
\end{multline*}
using~\eqref{eq:E-f-power}, we thus get
\begin{equation*}
\frac{1}{(1+\dn)^2} \, \En_{\alpha,\eps}(\msp) - \frac{\Cn}{1+\dn}  <
 \E_{\mu}\left [\tEn_{\alpha,\eps}(f_*\msp) \right]
\\
< (1+\dn)^2 \En_{\alpha,\eps}(\msp)  + \Cn.
\end{equation*}
Finally, using~\eqref{eq:nu-arb} with $\msp'=\msp$ to estimate $\En_{\alpha,\eps}$ via $\tEn_{\alpha,\eps}$, we get
\begin{multline*}
\frac{1}{(1+\dn)^2} \cdot \frac{1}{(1+\dn)}\left(\En_{\alpha,\eps}(\msp) - \Cn\right) - \frac{\Cn}{1+\dn}  <
 \E_{\mu}\left [\tEn_{\alpha,\eps}(f_*\msp) \right]
\\
< (1+\dn)^2 \cdot \left ((1+\dn) \En_{\alpha,\eps}(\msp) + \Cn\right)  + \Cn;
\end{multline*}
as $(1+\dn)^3<1+\delta$, we get the desired
\[
(1+\delta) (\En_{\alpha,\eps}(\msp) - C') < \E_{\mu}\left [\tEn_{\alpha,\eps}(f_*\msp) \right] <
(1+\delta) \En_{\alpha,\eps}(\msp) + C'
\]
for some constant~$C'$. As $\delta>0$ was arbitrary, and taking    (\ref{eq:nu-arb})    into account again, \eqref{tailsEst1} follows. 
\end{proof}

\subsection{
Estimating $W(f_* \mTn_{\alpha, \eps} (\msp), \mTn_{\alpha, \eps} (f_* \msp))$: 
Proof of Proposition~\ref{prop:WassEst}}\label{s:WE}

\begin{proof}[Proof of Proposition ~\ref{prop:WassEst}] For given $\alpha,\eps>0$ and measure $\msp$, consider a (non-probability) measure $\dm_{\alpha,\eps}(\msp)$ on $M\times M \times M$, given by
\[
\dm_{\alpha,\eps}(\msp)= \varphi_{\alpha,\eps}(d(x,y)) \varphi_{\alpha,\eps}(d(z,y)) \, \msp(dx) \, \Leb(dy) \, \msp(dz).
\]

Denote by $\pi_1$, $\pi_2$ the projections of $M\times M\times M$ on the first and second coordinate respectively, and by $\pi_{1,3}$ the projection on $M\times M$ corresponding to the first and third coordinates. Then directly by definition
\[
(\pi_{2})_* \, \dm_{\alpha,\eps}(\msp) = \mT_{\alpha,\eps}(\msp);
\]
also, define
\begin{equation}\label{eq:pi-1-3}
\mTt_{\alpha,\eps}(\msp):= (\pi_{1,3})_* \, \dm_{\alpha,\eps}(\msp) = K_{\alpha,\eps}(x,z) \, \msp(dx) \, \msp(dz);
\end{equation}
the second equality is due to~\eqref{eq:K-a-e}.
Finally, consider the measure
\begin{equation}
\mTf_{\alpha,\eps}(\msp):=(\pi_1)_* \, \dm_{\alpha,\eps}(\msp)
\end{equation}
as well as the normalizations of these measures,
\begin{equation}
\mTtn_{\alpha,\eps}(\msp):=\frac{1}{\tEn_{\alpha,\eps}(\msp)}  \mTt_{\alpha,\eps}(\msp), \quad
\mTfn_{\alpha,\eps}(\msp):=\frac{1}{\tEn_{\alpha,\eps}(\msp)} \mTf_{\alpha,\eps}(\msp).
\end{equation}


 For a high-energy measure $\msp$ most of the measure $\dm_{\alpha,\eps}(\msp)$ is concentrated near the diagonal, and hence the projections on the first and on the second coordinates are close to each other. The following lemma formalizes this argument:

\begin{lemma}\label{l:W-close}
For any $\delta_1$, $\alpha$ there exists $C'$ such that for any $\eps>0$ and $\msp$ with $\tEn_{\alpha,\eps}(\msp)>C'$ one has
\[
W(\mTn_{\alpha,\eps}(\msp),  \mTfn_{\alpha,\eps}(\msp))<\delta_1.
\]
\end{lemma}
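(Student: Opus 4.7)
The strategy is to use the three-dimensional measure $\dm_{\alpha,\eps}(\msp)$ itself to produce an explicit coupling between $\mTfn_{\alpha,\eps}(\msp)$ and $\mTn_{\alpha,\eps}(\msp)$. Concretely, I would set
\[
\gamma:=\frac{1}{\tEn_{\alpha,\eps}(\msp)}\,(\pi_{1,2})_*\,\dm_{\alpha,\eps}(\msp),
\]
where $\pi_{1,2}\colon M^3\to M^2$ is the projection onto the first two coordinates. By integrating out the $z$-variable one sees that the marginal of $\gamma$ on the second coordinate equals $\dens^2_{\alpha,\eps}[\msp](y)\,d\Leb(y)/\tEn_{\alpha,\eps}(\msp)=\mTn_{\alpha,\eps}(\msp)$, while by the very definition of $\mTf_{\alpha,\eps}(\msp)$ as $(\pi_1)_*\dm_{\alpha,\eps}(\msp)$ the marginal on the first coordinate is $\mTfn_{\alpha,\eps}(\msp)$. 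Hence $\gamma$ is a legitimate coupling and
\[
W\bigl(\mTn_{\alpha,\eps}(\msp),\mTfn_{\alpha,\eps}(\msp)\bigr)\le \frac{1}{\tEn_{\alpha,\eps}(\msp)}\iiint_{M^3} d(x,y)\,\varphi_{\alpha,\eps}(d(x,y))\,\varphi_{\alpha,\eps}(d(z,y))\,d\msp(x)\,d\Leb(y)\,d\msp(z).
\]

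Next, I would split the triple integral according to whether $d(x,y)\le r_0$ or $d(x,y)>r_0$, where $r_0>0$ is a parameter to be tuned at the end. On the near-diagonal region $\{d(x,y)\le r_0\}$ the trivial bound $d(x,y)\le r_0$ shows that the integrand is at most $r_0$ times the integrand of $\tEn_{\alpha,\eps}(\msp)$, so after dividing by $\tEn_{\alpha,\eps}(\msp)$ this contribution is at most $r_0$. On the off-diagonal region $\{d(x,y)>r_0\}$ I would combine three elementary bounds: $d(x,y)\le\diam(M)$; the fact that $\varphi_{\alpha,\eps}$ is non-increasing together with the explicit formula~\eqref{eq:def-varphi} (which implies $\varphi_{\alpha,\eps}(d(x,y))\le \varphi_{\alpha,\eps}(r_0)\le r_0^{-(k+\alpha)/2}$ uniformly in $\eps$); and the uniform estimate
\[
\sup_{z\in M,\;\eps>0}\int_M \varphi_{\alpha,\eps}(d(z,y))\,d\Leb(y)\le K_\alpha<\infty,
\]
which is established (via splitting into a geodesic ball and its complement) inside the proof of Lemma~\ref{l:K-U-comp}, using that $\alpha<1/2<k$. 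Together they bound the off-diagonal contribution by $\diam(M)\cdot r_0^{-(k+\alpha)/2}\cdot K_\alpha/\tEn_{\alpha,\eps}(\msp)$.

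To close the argument I would first choose $r_0:=\delta_1/3$ and then set
\[
C':=\frac{3\,\diam(M)\cdot r_0^{-(k+\alpha)/2}\cdot K_\alpha}{\delta_1},
\]
so that whenever $\tEn_{\alpha,\eps}(\msp)>C'$ both contributions are strictly less than $\delta_1/2$, yielding $W(\mTn_{\alpha,\eps}(\msp),\mTfn_{\alpha,\eps}(\msp))<\delta_1$ as required. I do not anticipate a serious obstacle here: the only non-trivial input is the uniform integrability bound on $\int_M \varphi_{\alpha,\eps}(d(z,y))\,d\Leb(y)$, and this has already been essentially verified in Section~\ref{s:P41}; everything else reduces to the qualitative ``mass concentrates near the diagonal'' picture that motivates the statement.
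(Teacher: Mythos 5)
Your proposal is correct and follows essentially the same route as the paper: the paper also couples the two normalized measures via the projection of $\frac{1}{\tEn_{\alpha,\eps}(\msp)}\,\dm_{\alpha,\eps}(\msp)$ onto the first two coordinates, splits according to whether $d(x,y)<r_0$, bounds the near-diagonal contribution by $r_0$, and bounds the off-diagonal mass by the same uniform constant $C^K_{\alpha}(r_0)$ (obtained exactly as in your estimate, from $\varphi_{\alpha,\eps}(r_0)\le r_0^{-(k+\alpha)/2}$ and the uniform bound on $\int_M\varphi_{\alpha,\eps}(d(z,y))\,d\Leb(y)$ from the proof of Lemma~\ref{l:K-U-comp}). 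The only differences are cosmetic choices of $r_0$ and the final threshold $C'$.
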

\begin{proof}
Take $r_0:=\frac{\delta_1}{2}$ and let
\[
A_{r_0}:=\{(x,y,z)\in M^3 \mid d(x,y)< r_0 \}, \bA_{r_0}:=M^3 \setminus A_{r_0}.
\]
From the proof of Lemma~\ref{l:K-U-comp}, we have
\[
\dm_{\alpha,\eps}(\msp)(\bA_{r_0}) = \int \bK_{\alpha,\eps}^{(r_0)}(x,z) \, d\msp(x) \, d\msp(z) \le C^K_{\alpha}(r_0),
\]
where the second inequality is due to~\eqref{eq:C-K-alpha}. Thus, the non-normalized measure $\dm_{\alpha,\eps}(\msp)(\bA_{r_0})$ does not exceed a constant~$C^K_{\alpha}(r_0)$.

Now, we can couple the normalized measures $
\mTn_{\alpha,\eps}(\msp)$ and 
$ \mTfn_{\alpha,\eps}(\msp)$
using the projection of $\frac{1}{\tEn_{\alpha,\eps}(\msp)} \dm_{\alpha,\eps}(\msp)$ on the first two coordinates; this coupling leads to the upper bound for the Wasserstein distance
\begin{multline}\label{eq:W-delta-1}
W(\mTn_{\alpha,\eps}(\msp), \mTfn_{\alpha,\eps}(\msp))\le r_0 \cdot \frac{\dm_{\alpha,\eps}(\msp)(A_{r_0})}{\tEn_{\alpha,\eps}(\msp)}
\\
+ \diam(M) \cdot \frac{\dm_{\alpha,\eps}(\msp)(\bA_{r_0})}{\tEn_{\alpha,\eps}(\msp)}    \le r_0+ \frac{C^K_{\alpha}(r_0)}{\tEn_{\alpha,\eps}(\msp)},
\end{multline}
where the first summand corresponds to the points with $d(x,y)<r_0$, and the second to the points with $d(x,y)\ge r_0$. As we chose $r_0=\frac{\delta_1}{2}$, it suffices to require that
\[
\tEn_{\alpha,\eps}(\msp) > \frac{2 \, C^K_{\alpha}(r_0)}{\delta_1} =: C'
\]
to ensure that the total Wasserstein distance does not exceed $\delta_1$.
\end{proof}

On the other hand, for the measures $\mTfn_{\alpha,\eps}(\msp)$ the analogue of Proposition~\ref{prop:WassEst} can be established directly, and one can even estimate the total variations distance:
\begin{lemma}\label{l:TV-prime}
For any $\delta_2 > 0$ and any $R > 0$ there exists $\alpha_2 > 0$ such that for any $\alpha \in (0,\alpha_2)$ there exists $C > 0$ such that for any $f \in \Diff^1(M)$ with $\fL(f) < R$, any $\eps > 0$, and any $\msp$ such that $\tEn_{\alpha, \eps} (\msp) > C$ or $\En_{\alpha, \eps} (\msp) > C$  one has
\begin{equation}\label{eq:TV-theta-prime}
\TV(f_* \mTfn_{\alpha,\eps}(\msp),\mTfn_{\alpha,\eps}(f_* \msp)) < \delta_2,
\end{equation}
and, hence,
\begin{equation}\label{eq:W-TV}
W(f_* \mTfn_{\alpha,\eps}(\msp),\mTfn_{\alpha,\eps}(f_* \msp)) < \delta_2 \cdot \diam(M).
\end{equation}
\end{lemma}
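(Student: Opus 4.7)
The plan is to realize both $f_* \mTfn_{\alpha,\eps}(\msp)$ and $\mTfn_{\alpha,\eps}(f_*\msp)$ as densities against the common reference $f_*\msp$, and to show that these densities are pointwise comparable with multiplicative and additive errors that both vanish in the desired regime. Unpacking the definition $\mTf_{\alpha,\eps}(\msp) = (\pi_1)_*\dm_{\alpha,\eps}(\msp)$ and integrating out $y$ and $z$ gives $\mTf_{\alpha,\eps}(\msp) = g(x)\, d\msp(x)$ with $g(x) = \int_M K_{\alpha,\eps}(x,z)\,d\msp(z)$. Pushing forward under $f$, and changing variable $z=f^{-1}(z')$ in the other measure, the densities of $f_*\mTf_{\alpha,\eps}(\msp)$ and $\mTf_{\alpha,\eps}(f_*\msp)$ with respect to $f_*\msp$ become
\[
h_1(f(x)) = \int_M K_{\alpha,\eps}(x,z)\,d\msp(z), \qquad h_2(f(x)) = \int_M K_{\alpha,\eps}(f(x),f(z))\,d\msp(z),
\]
with total masses $M_1 = \tEn_{\alpha,\eps}(\msp)$ and $M_2 = \tEn_{\alpha,\eps}(f_*\msp)$.

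The key pointwise estimate is that for any $\delta_3 > 0$, one can choose $\alpha_2$ small and $C_3$ large so that
\[
K_{\alpha,\eps}(f(x),f(z)) \le (1+\delta_3)\,K_{\alpha,\eps}(x,z) + C_3
\]
and symmetrically, for all $x,z \in M$. This chains two inputs: Lemma~\ref{l:K-U-comp} applied with a small parameter $\delta_0$ gives $K_{\alpha,\eps} \approx_{(\delta_0,C_0)} U_{\alpha,\eps}$, while Proposition~\ref{p:properties}(\ref{i-4}) combined with $\fL(f) < R$ yields $U_{\alpha,\eps}(d(f(x),f(z)))/U_{\alpha,\eps}(d(x,z)) \in [R^{-\alpha}, R^{\alpha}]$, a ratio that tends to $1$ as $\alpha \to 0$. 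Integrating the pointwise inequality against $d\msp(z)$ (using that $\msp$ is a probability measure) yields $h_2 \le (1+\delta_3)h_1 + C_3$ pointwise and, after one more integration against $d(f_*\msp)$, also $M_2 \le (1+\delta_3)M_1 + C_3$ (and symmetrically).

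To convert this into a total variation bound, I would use the elementary inequality
\[
\left| \frac{h_1}{M_1} - \frac{h_2}{M_2} \right| \le \frac{|h_1 - h_2|}{M_1} + \frac{h_2\,|M_1 - M_2|}{M_1 M_2},
\]
which after integration against $f_*\msp$ yields
\[
\TV(f_*\mTfn_{\alpha,\eps}(\msp), \mTfn_{\alpha,\eps}(f_*\msp)) \le \frac{1}{2}\Bigl(\frac{\int |h_1-h_2|\,d(f_*\msp)}{M_1} + \frac{|M_1-M_2|}{M_1}\Bigr).
\]
Both summands are bounded by expressions of the form $O(\delta_3) + O(C_3/M_1)$ via the pointwise and integrated comparisons above. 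The assumption $\tEn_{\alpha,\eps}(\msp) > C$ (or $\En_{\alpha,\eps}(\msp) > C$, translated via Proposition~\ref{prop:EnEquiv}) makes $M_1$ as large as needed, so first choosing $\delta_3$ and $\alpha_2$ small and then $C$ large brings the right-hand side below the prescribed $\delta_2$. The Wasserstein inequality~\eqref{eq:W-TV} is then immediate from Lemma~\ref{lm:WasEst}.

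The main obstacle is the careful bookkeeping of the mixed multiplicative-and-additive errors produced by Lemma~\ref{l:K-U-comp}: these errors must be propagated through the pointwise estimate for $K$, through integration against $\msp$ and $f_*\msp$, and through the normalization that turns the $h_i$ into densities of probability measures. The guiding principle is that the multiplicative errors are tamed by taking $\alpha$ (and $\delta_0$) small (since $R^{\alpha} \to 1$), while the additive errors become relatively negligible once the total energy $M_1$ is large, and the two small parameters must be chosen in this order.
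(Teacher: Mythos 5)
Your proof is correct and follows essentially the same route as the paper's: both rest on the two-sided comparison of $K_{\alpha,\eps}(f(x),f(z))$ with $K_{\alpha,\eps}(x,z)$ obtained by chaining Lemma~\ref{l:K-U-comp} with the regularity estimate~\eqref{UReg}, with the multiplicative error tamed by taking $\alpha$ small (so that $R^{\alpha}$ is close to~$1$) and the additive error rendered negligible by the largeness of the energy. The only difference is one of bookkeeping: the paper performs the total-variation comparison upstairs on $M\times M$ (densities of $\mTtn_{\alpha,\eps}$ with respect to $\msp\times\msp$, split into the region where $K_{\alpha,\eps}$ exceeds a threshold and its small-mass complement, followed by projection), whereas you integrate out $z$ first and compare densities with respect to $f_*\msp$ directly on $M$; both schemes close the argument.
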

\begin{proof}
We will first use Lemma~\ref{l:K-U-comp} together with Proposition~\ref{prop:phifReg}
to compare $K_{\alpha,\eps}(x,z)$ with $K_{\alpha,\eps}(f(x),f(z))$. Namely, choose $\alpha_0$ and $\delta_3$ so small that
\[
(1+\delta_3)^2 \cdot R^{\alpha_0} < 1+\frac{\delta_2}{10}.
\]
Then from Lemma~\ref{l:K-U-comp} we know that there exists $C>0$ such that for any $x,z,\eps$ we have
\[
K_{\alpha,\eps}(x,z) \approx_{(\delta_3,C)} U_{\alpha,\eps}(x,z), \quad  K_{\alpha,\eps}(f(x),f(z)) \approx_{(\delta_3,C)} U_{\alpha,\eps}(f(x),f(z)),
\]
while from~\eqref{UReg} in Proposition~\ref{prop:phifReg}  we get
\[
\frac{U_{\alpha,\eps}(f(x),f(z))}{U_{\alpha,\eps}(x,z)} \in (\fL(f)^{-\alpha}, \fL(f)^{\alpha}) \subset (R^{-\alpha_0},R^{\alpha_0}).
\]
Joining these three estimates together, we obtain
\begin{equation}\label{eq:K-delta-C}
K_{\alpha,\eps}(x,z) \approx_{(\frac{\delta_2}{10},C'')}K_{\alpha,\eps}(f(x),f(z))
\end{equation}
for some explicit constant~$C''$.

Now, applying $f^{-1}_*$ does not change the total variations distance, so instead of~\eqref{eq:TV-theta-prime} we can show the equivalent statement
\begin{equation}\label{eq:d-2}
\TV(\mTfn_{\alpha,\eps}(\msp),f^{-1}_* \mTfn_{\alpha,\eps}(f_* \msp)) < \delta_2.
\end{equation}
The measures here can be obtained as projections of measures on $M\times M$:
\[
\mTfn_{\alpha,\eps}(\msp) =  (\pi_1)_* \, \mTtn_{\alpha,\eps}(\msp),
\]
\[
f^{-1}_* \mTfn_{\alpha,\eps}(f_* \msp) =
 (\pi_1)_* \left(f^{-1}_* \mTtn_{\alpha,\eps}(f_* \msp)\right)
\]
where $\pi_1$ is the projection on the first coordinate. To obtain the desired~\eqref{eq:d-2}, we will actually show that for sufficiently high energy~$\tEn_{\alpha,\eps}(\msp)$ even before projection one has
\begin{equation}\label{eq:TV-delta-2}
\TV(\mTtn_{\alpha,\eps}(\msp),f^{-1}_* \mTtn_{\alpha,\eps}(f_* \msp))<\delta_2.
\end{equation}

To do so, note that both these measures
are absolutely continuous with respect to
$\nu\times \nu$:
\[
\mTtn_{\alpha,\eps}(\msp) = \frac{1}{\tEn_{\alpha,\eps}(\msp)}  K_{\alpha,\eps}(x,z) \, \nu(dx) \, \nu (dz),
\]
\[
f^{-1}_* \mTtn_{\alpha,\eps}(f_* \msp) =  \frac{1}{\tEn_{\alpha,\eps}(f_* \msp)} K_{\alpha,\eps}(f(x),f(z)) \, \nu(dx) \, \nu (dz).
\]
Now, \eqref{eq:K-delta-C} implies that once $K_{\alpha,\eps}(x,z)>C_2=:\frac{20C''}{\delta_2}$, the $K_{\alpha,\eps}$ parts of these densities are close to each other:
\[
\frac{1}{1+\frac{\delta_2}{6}}< \frac{K_{\alpha,\eps}(f(x),f(z))}{K_{\alpha,\eps}(x,z)} < 1+\frac{\delta_2}{6}.
\]
On the other hand, due to Corollary~\ref{cor:tEnChangeOneStep}, once the energy $\tEn_{\alpha,\eps}(\msp)$ is sufficiently large, the normalization constants are also close to each other,
\[
\frac{1}{1+\frac{\delta_2}{6}} \, \tEn_{\alpha,\eps}(\msp)<\tEn_{\alpha,\eps}(f_*\msp) <
(1+\frac{\delta_2}{6}) \, \tEn_{\alpha,\eps}(\msp).
\]
Multiplying the two inequalities, we get that on the set $$A:=\{(x,z)\mid K_{\alpha,\eps}(x,z)>C_2\}$$ the quotient of densities w.r.t. $\msp\times \msp$ of the two measures is in the interval $\left(\frac{1}{(1+\frac{\delta_2}{6})^2}, (1+\frac{\delta_2}{6})^2\right)$.  

Finally, the $\mTtn_{\alpha,\eps}(\msp)$-measure of its complement does not exceed $\frac{C_2}{\tEn_{\alpha,\eps}(\msp)}$. Hence, if the energy $\tEn_{\alpha,\eps}(\msp)$ is sufficiently high to make sure that $\frac{C_2}{\tEn_{\alpha,\eps}(\msp)}<\frac{\delta_2}{6}$, the part that the normalized measures have in common is at least
\[
\frac{1-\frac{\delta_2}{6}}{(1+\frac{\delta_2}{6})^2} > 1-\frac{\delta_2}{2},
\]
and hence the total variation~\eqref{eq:TV-delta-2} indeed does not exceed~$\delta_2$.

An application of Lemma~\ref{lm:WasEst} concludes the proof of the upper bound~\eqref{eq:W-TV} for the Wasserstein distance.

%
%
%
%
%
%
%
%
\end{proof}

Lemma \ref{l:W-close}  and   Lemma \ref{l:TV-prime}  
together imply Proposition~\ref{prop:WassEst}. Indeed, we have:
\begin{multline*}
W(f_* \mTn_{\alpha,\eps}(\msp), \mTn_{\alpha,\eps}(f_* \msp) )
\le
W(f_* \mTn_{\alpha,\eps}(\msp), f_* \mTfn_{\alpha,\eps}(\msp))+
\\
+ W(f_* \mTfn_{\alpha,\eps}(\msp),\mTfn_{\alpha,\eps}(f_* \msp))
+ W(\mTfn_{\alpha,\eps}(f_* \msp), \mTn_{\alpha,\eps}(f_* \msp))
\end{multline*}
The first and the third summands can be estimated directly using Lemma~\ref{l:W-close}. Indeed, for the first summand, it suffices to note that the application of~$f$ increases the Wasserstein distance at most $\fL(f)<R$ times; for the last summand, to apply this lemma, we note that an upper bound on $\fL(f)$ implies that $f_*\msp$ is of sufficiently high energy provided that $\msp$ is of high energy.


Finally, the second summand is estimated directly by Lemma~\ref{l:TV-prime}.
\end{proof}

\subsection{Estimating $W(\mTn_{\alpha, \eps} (f_* \msp), \mTn_{\alpha, \eps}(\mgr * \msp))$: Proof of Lemma \ref{l.West}}\label{s:W-proof}
    \begin{proof}[Proof of Lemma \ref{l.West}]
    	We start by working with the non-normalized measures $\mT_{\alpha,\eps}(f_*\msp), \mT_{\alpha,\eps}(\mgr*\msp)$. To simplify the notation, denote
\[
	g_f(y):=\rho_{\alpha,\eps}[f_*\msp](y), \quad \bg(y):=\rho_{\alpha,\eps}[\mgr*\msp](y).
\]
Therefore, we have
$$
\mT_{\alpha,\eps}[f_*\msp]=(g_f(y))^2d\text{Leb}(y)\ \ \text{\rm and}\ \ \mT_{\alpha,\eps}[\mgr*\msp]=(\bg(y))^2d\text{Leb}(y).
$$
From~\eqref{contrAssumption2} we have
\[
\tEn_{\alpha, \eps} (\msp) \le \frac{1}{1-\delta} \tEn_{\alpha, \eps} (\mgr * \msp).
\]
Joining with~\eqref{eq:exp-rho}, we have
\begin{equation}\label{eq:exp-rho-f}
\E_f \left[ \int_M |g_f (y) - \bg (y) |^2 \, d\Leb(y)\right] \le \frac{2\delta}{1-\delta} \tEn_{\alpha, \eps} (\mgr * \msp).
\end{equation}
By Markov inequality, with the probability at least $1-\frac{2\sqrt{\delta}}{1-\delta}$, one has
\begin{multline}\label{eq:f-bar}
\int_M |g_f (y) - \bg (y) |^2 \, d\Leb(y) \le \sqrt{\delta}\,  \tEn_{\alpha, \eps} (\mgr * \msp)
\\
= \sqrt{\delta} \, \mT_{\alpha, \eps} [\mgr* \msp] (M)
= \sqrt{\delta} \int_M \bg^2(y) \, d\Leb(y).
\end{multline}
Now, for any $f$ for which~\eqref{eq:f-bar} holds, consider the set
\[
X_f:=\{y\in M \, : \, |g_f (y) - \bg (y) | \le \sqrt[8]{\delta} \cdot \bg(y)\}.
\]
Again from Markov inequality type argument, one has
\begin{equation}\label{eq:M-Xf}
\mT_{\alpha, \eps} [\mgr* \msp] (X_f) \ge \left( 1-\sqrt[4]{\delta}\right) \mT_{\alpha, \eps} [\mgr* \msp] (M).
\end{equation}
Indeed, on the complement $M\setminus X_f$ one has $|g_f (y) - \bg (y) |^2> \sqrt[4]{\delta} \cdot \bg^2(y)$; integrating, one gets
\[
\int_{M\setminus X_f} |g_f (y) - \bg (y) |^2 \, d\Leb(y) > \sqrt[4]{\delta} \cdot \mT_{\alpha, \eps} [\mgr* \msp] (M\setminus X_f).
\]
Thus, if~\eqref{eq:M-Xf} did not hold, it would imply
\[
\mT_{\alpha, \eps} [\mgr* \msp] (M\setminus X_f) > \sqrt[4]{\delta} \cdot  \mT_{\alpha, \eps} [\mgr* \msp] (M),
\]
hence providing a contradiction with~\eqref{eq:f-bar}.


Now,~\eqref{eq:M-Xf} implies that the non-normalized measures $\mT_{\alpha,\eps}[f_*\msp]$ and $\mT_{\alpha,\eps}[\mgr*\msp]$ share a common part
\[
\Xi_f:=(1-\sqrt[8]{\delta})^2 \, \bg^2(y) \cdot \Ind_{X_f}(y) \, d\Leb(y)
\]
of measure at least
\begin{equation}\label{eq:Xi}
\Xi_f(M)\ge (1-\sqrt[8]{\delta})^2 (1-\sqrt[4]{\delta}) \cdot \mT_{\alpha, \eps} [\mgr* \msp] (M).
\end{equation}

Finally, note that the normalization constants are also close with high probability, namely, for $f$ such that the inequality~\eqref{eq:f-bar} holds. Indeed,  the inequality~\eqref{eq:f-bar}  can be rewritten as
\[
\|g_f-\bg \|_{L_2(M)}^2 \le \sqrt{\delta} \cdot \|\bg \|_{L_2(M)}^2;
\]
hence, for any $f$ for which it holds, the triangle inequality implies
\begin{equation}\label{eq:mTf}
\mT_{\alpha, \eps} [f_* \msp] (M) =
\|g_f\|_{L_2(M)}^2 \le (1+\sqrt[4]{\delta})^2
\mT_{\alpha, \eps} [\mgr* \msp] (M).
\end{equation}
Now, the normalized measures $\mTn_{\alpha, \eps} [f_* \msp]$ and $\mTn_{\alpha, \eps} [\mgr * \msp]$
share the common part $\frac{1}{C_f} \Xi_f$, where
\[
C_f=\max(\mT_{\alpha, \eps} [f_* \msp](M), \mT_{\alpha, \eps} [\mgr * \msp](M))
\]
is the maximum of two normalization constants. Using~\eqref{eq:Xi} and~\eqref{eq:mTf}, we see that this part has measure at least
\[
\frac{1}{C_f} \Xi_f(M) \ge \frac{(1-\sqrt[8]{\delta})^2 (1-\sqrt[4]{\delta})}{(1+\sqrt[4]{\delta})^2} \ge 1-10\sqrt[8]{\delta}.
\]
Hence, for any $f$ satisfying~\eqref{eq:f-bar} the total variation distance between the normalized measures does not exceed
\[
\TV(\mTn_{\alpha, \eps} (f_* \msp), \mTn_{\alpha, \eps}(\mgr * \msp))) \le 10\sqrt[8]{\delta}.
\]
As~\eqref{eq:f-bar} holds with the probability at least~$1-\frac{2\sqrt{\delta}}{1-\delta}> 1-4\sqrt{\delta}$, this establishes conclusion~\eqref{eq:Probably-TV}. Finally, this immediately implies conclusion~\eqref{eq:Probably} due to Lemma~\ref{lm:WasEst}.

\end{proof}


\begin{appendix}\label{a.1}
\section{Example}\label{ss.example}

Notice that our main result is applicable to the case when all the maps we consider are projective maps of $\mathbb{RP}^d$. H\"older continuity of a stationary measure in that case is a well known result by Guivarc'h \cite{G}, as we discussed in Section \ref{ss.smrmp} above. The condition on finiteness of moments (used both in our main result and in \cite{G}) is more restrictive than the condition in the Furstenberg Theorem on random matrix products that ensures positivity of Lyapunov exponent, i.e. $\mathbb{E}\log\|A\|<\infty$, see \cite{Fur1}. Let us give an example that shows that this is not an artifact of the proof, and that the original Furstenberg condition is not sufficient to guarantee H\"older continuity of stationary measures.

\begin{proposition}\label{p.example}
There exists a probability distribution $\mgr$ on $\SL(2, \mathbb{R})$ such that

\vspace{5pt}

1) The smallest subgroup generated by $\text{supp}\,\mgr$ is proximal (in particular, not compact) and strongly irreducible;

\vspace{5pt}

2) $\mathbb{E}_{\mgr}\log\|A\|<\infty$;

\vspace{5pt}

3) For any $\gamma>0$ we have $\mathbb{E}_{\mgr}\|A\|^\gamma =\infty$;

\vspace{5pt}

4) Any stationary measure on $\mathbb{RP}^1$ is not H\"older continuous.
\end{proposition}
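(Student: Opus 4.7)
Plan. I would take $\mgr$ to be a heavy-tailed discrete measure supported on matrices of the form $A_n := R D_n$, where $D_n := \mathrm{diag}(e^n, e^{-n})$ and $R = R_\theta$ is a fixed rotation with $\theta/\pi \notin \bbQ$. Concretely, set
\[
	\mgr := \sum_{n \geq 1} p_n \, \delta_{A_n}, \qquad p_n := \frac{1}{\zeta(3)\, n^3}.
\]
Since $\|A_n\| = e^n$, the $n^{-3}$ decay of $p_n$ is exactly in the sweet spot: $\sum_n n\, p_n < \infty$ yields $\E \log\|A\| < \infty$, whereas $\sum_n e^{\gamma n}\, p_n = \infty$ for every $\gamma > 0$, killing all positive moments of $\|A\|$.

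For proximality, the normalized sequence $e^{-n} A_n = R \cdot \mathrm{diag}(1, e^{-2n})$ converges in norm to the rank-one endomorphism $R \cdot \mathrm{diag}(1, 0)$, so $S_\mgr$ is proximal. For strong irreducibility I would assume a finite invariant $X \subset \mathbb{RP}^1$ with $A_n(X) = X$ for all $n\geq 1$ and derive a contradiction. In the singleton case $X = \{[v]\}$, the relation $R D_n v \parallel v$ translates to $D_n v \parallel R^{-1}v$ for every $n$; since the ratios of coordinates of $D_n v$ scale like $e^{2n}$, this forces one coordinate of $v$ to vanish, so $[v] \in \{[e_1],[e_2]\}$, and then $R$ must fix that axis, contradicting the irrationality of $\theta$. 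In the case $|X| \geq 2$, as $n\to\infty$ the projective map $A_n$ contracts $\mathbb{RP}^1$ minus a shrinking neighborhood of its (expanding) repelling fixed point $v_-^n$ into an ever smaller neighborhood of $[Re_1]$; since $A_n|_X$ is a permutation of the finite set $X$ with pairwise minimum distance $d_{\min}>0$, this forces $v_-^n \in X$ for all large $n$. A direct computation of the fixed-point equation $A_n v \parallel v$ shows that $v_-^n$ varies with $n$, so the set $\{v_-^n\}_{n\geq n_0}$ is infinite, contradicting $|X|<\infty$.

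For failure of H\"older regularity, the above two properties together with $\E\log\|A\| < \infty$ yield a \emph{unique} stationary measure $\nu$ via Furstenberg's theorem, and the same no-common-fixed-point argument shows $\nu$ is not a Dirac mass; in particular one can fix $\epsilon > 0$ with $\nu(B_\epsilon([e_2])) \leq 1/2$. Applying stationarity to $B_r([Re_1])$ gives
\[
	\nu(B_r([Re_1])) = \sum_n p_n \, \nu\!\left(A_n^{-1} B_r([Re_1])\right) = \sum_n p_n \, \nu\!\left(D_n^{-1} B_r([e_1])\right),
\]
and an angular computation in coordinates $\theta\in(-\pi/2,\pi/2]$ shows that once $n \geq N(r) := \lceil \tfrac{1}{2}\log \tfrac{1}{r\epsilon} \rceil$, the preimage $D_n^{-1} B_r([e_1])$ contains $\mathbb{RP}^1 \setminus B_\epsilon([e_2])$ and hence has $\nu$-mass at least $1/2$. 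Summing,
\[
	\nu(B_r([Re_1])) \;\geq\; \tfrac12 \sum_{n \geq N(r)} \frac{c}{n^3} \;\geq\; \frac{c'}{(\log(1/r))^2},
\]
which exceeds $r^\alpha$ for every $\alpha>0$ at all sufficiently small $r$, so $\nu$ is not H\"older. The main obstacle is verifying semigroup (not group) strong irreducibility in Step 2 and checking that $\nu$ does not concentrate at $[e_2]$; once these are in place, the H\"older failure is a clean consequence of stationarity, as the logarithmic tail of $(p_n)$ is inherited by $\nu$ at the attracting point $[Re_1]$, and any positive power of $r$ decays strictly faster than any negative power of $\log(1/r)$.
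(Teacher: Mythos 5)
Your construction is correct but runs along a genuinely different route from the paper's. The paper takes $A_n=R_{1/(5n)}\,\mathrm{diag}(2^{n^2},2^{-n^2})\,R_{-1/(5n)}$ with $p_n=2^{-n}$, so the heavy tail comes from super-exponential norms against exponentially small probabilities, and the failure of H\"older continuity is proved by a ``repeat the same map $k$ times in a row'' argument: with probability $p_n^k$ the orbit lands in an interval $I_{n,k}$ of length $\sim(100\cdot 2^{-2n^2})^k$ around the attracting fixed point of $f_n$, so $\log\msp(I_{n,k})/\log|I_{n,k}|\to 0$, i.e.\ the local dimension degenerates. You instead take exponential norms $e^{n}$ against polynomially small probabilities $n^{-3}$, and defeat H\"older continuity with a \emph{single} application of stationarity: the tail $\sum_{n\ge N}p_n\sim N^{-2}$ is transported into the ball $B_r([Re_1])$ with $N\sim\frac12\log(1/r)$, giving the clean quantitative bound $\msp(B_r)\gtrsim(\log(1/r))^{-2}$, which no power $r^\alpha$ can dominate. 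Your approach buys an explicit modulus-of-continuity lower bound and avoids iterating; the paper's buys independence from any control of $\msp$ away from the singular direction. The one step you should tighten is the passage from ``$\msp$ is not a Dirac mass'' to ``$\msp(B_\eps([e_2]))\le 1/2$ for some $\eps$'': a non-Dirac measure could still carry an atom of mass $>1/2$ at $[e_2]$. What you actually need is $\msp(\{[e_2]\})\le 1/2$, and this follows from the standard fact that under strong irreducibility every stationary measure on $\mathbb{RP}^1$ is atomless (no moment hypotheses needed); invoke that instead. Your strong irreducibility argument for $|X|\ge 2$ is also cleaner if phrased via the singular direction rather than the repelling fixed point: unless $[e_2]\in X$, the contraction of $\mathbb{RP}^1\setminus B_\delta([e_2])$ into a shrinking neighborhood of $[Re_1]$ contradicts $\diam$-preservation of the finite set; and if $[e_2]\in X$ then $[RD_nRe_2]\in X$ gives infinitely many distinct points since $Re_2$ has both coordinates nonzero. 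With these two touch-ups the proof is complete.
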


\begin{proof}
For each $n\in \mathbb{N}$ set  $$A_n=R_{\frac{1}{5n}}\cdot
\begin{pmatrix}
2^{n^2} & 0 \\
0 & 2^{-n^2} \\
\end{pmatrix}\cdot R_{-\frac{1}{5n}},$$ where $R_{\alpha}=\begin{pmatrix}
                                                   \cos 2\pi \alpha & \sin 2\pi \alpha\\
                                                   -\sin 2\pi \alpha & \cos 2\pi \alpha \\
                                                 \end{pmatrix}$, and define the distribution $\mgr$ on $\SL(2, \mathbb{R})$ by
$$
\mgr(A_n)=p_n\equiv \frac{1}{2^n}.
$$
It is straightforward to check that the conditions 1), 2), and 3) of the Proposition \ref{p.example} are satisfied. Let us show that 4) also holds. Let us denote by $f_n:\mathbb{RP}^1\to \mathbb{RP}^1$ a projectivization of $A_n:\mathbb{R}^2\to \mathbb{R}^2$. Slightly abusing the notation, let us use the same symbol $\mgr$ also for the distribution on the space of projective maps with $\mgr(f_n)=p_n=\frac{1}{2^n}$. Each map $f_n$ has an attracting fixed point that we denote by $s_n$. Let $\msp$ be a stationary probability measure on $\mathbb{RP}^1$. Due to the explicit form of the matrices $A_n$ we have $\text{supp}\, \msp\subseteq [0, 2/5]$ (here we identify $\mathbb{RP}^1$ with $[0,1]/_{\{0\sim 1\}}$). If $x\in [0, 2/5]\subset \mathbb{RP}^1$ and $v\in \mathbb{R}^2$ is the corresponding unit vector, then
$$
|A_nv|\ge 2^{n^2}\cos \frac{2\pi}{5}>\frac{2^{n^2}}{10},\ \text{and hence}\ |f'_n(x)|=\frac{1}{|A_nv|^2}<\frac{100}{2^{{2n^2}}},
$$
Notice that with probability $p_n^k$ we can apply the matrix $A_n$ in the random product $k$ times in a row. Since
$$
f_n^k(\supp\,\msp)\subset f_n^k([0, 2/5])\subset \left[s_n-\left(\frac{100}{2^{2n^2}}\right)^k, s_n+\left(\frac{100}{2^{2n^2}}\right)^k\right]\equiv I_{n,k},
$$
we have $\msp(I_{n,k})\ge p_n^k=\frac{1}{2^{kn}}$. This implies that
$$
\frac{\log \msp(I_{n,k})}{\log|I_{n,k}|}\le \frac{-n\log 2}{\frac{\log 2}{k}+\log 100-2n^2\log 2}\to 0\ \ \text{as}\ \ n\to \infty,
$$
hence the measure $\msp$ cannot be H\"older continuous.
\end{proof}
\end{appendix}

\section*{Acknowledgments}
The authors are grateful to B. Barany, S. Cantat, B. Deroin, Y. Guivarc'h, F. Ledrappier, and B.\,Solomyak for fruitful discussions and for providing helpful references.

\end{document}